\theoremstyle{plain}
\newtheorem{theorem}{Theorem}
\numberwithin{theorem}{section}
\newtheorem*{acknowledgement}{Acknowledgement}
\newtheorem{corollary}[theorem]{Corollary}
\newtheorem{thmannounce}{Theorem}
\newtheorem{definition}[theorem]{Definition}
\newtheorem{lemma}[theorem]{Lemma}
\newtheorem{proposition}[theorem]{Proposition}
\theoremstyle{remark}
\newtheorem{remark}[theorem]{Remark}
\newtheorem{example}[theorem]{Example}
\numberwithin{equation}{section}
\begin{document}
\title[Relative $K$-group in the ETNC, III]{On the relative $K$-group in the ETNC, Part III}
\author{Oliver Braunling}
\address{Department of Mathematics, University of Freiburg, D-79104 Freiburg
im\ Breisgau, Germany}
\thanks{The author was supported by DFG GK1821 \textquotedblleft Cohomological Methods
in Geometry\textquotedblright.}
\subjclass[2000]{Primary 11R23 11G40; Secondary 11R65 28C10}
\keywords{Equivariant Tamagawa number conjecture, ETNC, locally compact modules}

\begin{abstract}
The previous papers in this series were restricted to regular orders. In
particular, we could not handle integral group rings, one of the most
interesting cases of the ETNC. We resolve this issue. We obtain versions of
our main results valid for arbitrary non-commutative Gorenstein orders. This
encompasses the case of group rings. The only change we make is using a
smaller subcategory inside all locally compact modules.

\end{abstract}
\maketitle

\section{Introduction}

This paper is concerned with the non-commutative equivariant Tamagawa number
conjecture (ETNC) in the formulation of Burns and Flach \cite{MR1884523}. We
assume some familiarity with this framework and use the same notation. Let $A$
be a finite-dimensional semisimple $\mathbb{Q}$-algebra and $\mathfrak{A}%
\subset A$ an order. Using the Burns--Flach theory, a Tamagawa number is an
element%
\[
T\Omega\in K_{0}(\mathfrak{A},\mathbb{R})
\]
in the relative $K$-group $K_{0}(\mathfrak{A},\mathbb{R})$. In our previous
paper \cite{etnclca} we have proposed the following viewpoint: Originally
Tamagawa numbers were defined as volumes in terms of the Haar measure. Then we
argued that the universal determinant functor of the category of locally
compact abelian (LCA) groups is the Haar measure in a suitable sense. Thus,
when wanting to define an \textit{equivariant} Tamagawa number, one should
work with an equivariant\ Haar measure. This led us to consider the category
of $\mathfrak{A}$\textit{-equivariant} LCA groups, denoted by $\mathsf{LCA}%
_{\mathfrak{A}}$. The universal determinant functor of this category should be
a reasonable approach to an `equivariant Haar measure', and thus to
equivariant Tamagawa numbers.

Unfortunately, the above picture turned out to be true only for
\textit{regular} orders. However, in this case it works perfectly: We proved%
\[
K_{0}(\mathfrak{A},\mathbb{R})\cong K_{1}(\mathsf{LCA}_{\mathfrak{A}})\text{,}%
\]
showing that our Haar measure based philosophy leads to exactly the same group
as in the original Burns--Flach formulation. One of the most attractive cases
of the ETNC is for integral group rings $\mathfrak{A}=\mathbb{Z}[G]$, where
$G$ is a finite group. These orders are regular only for the trivial group, so
\cite{etnclca} fails to deliver in this interesting case.

In the present paper, we introduce a full subcategory%
\[
\mathsf{LCA}_{\mathfrak{A}}^{\ast}\subseteq\mathsf{LCA}_{\mathfrak{A}}%
\]
which fulfills the above picture for arbitrary Gorenstein orders
$\mathfrak{A}$. This encompasses hereditary orders (which we could also handle
previously), but more importantly group rings. Besides switching to this
smaller category, the formulation of the results remains the same:

\begin{thmannounce}
Suppose $A$ is a finite-dimensional semisimple $\mathbb{Q}$-algebra and let
$\mathfrak{A}\subset A$ be a Gorenstein order. There is a canonical long exact
sequence of algebraic $K$-groups%
\[
\cdots\rightarrow K_{n}(\mathfrak{A})\rightarrow K_{n}(A_{\mathbb{R}%
})\rightarrow K_{n}(\mathsf{LCA}_{\mathfrak{A}}^{\ast})\rightarrow
K_{n-1}(\mathfrak{A})\rightarrow\cdots
\]
for positive $n$, ending in%
\[
\cdots\rightarrow K_{0}(\mathfrak{A})\rightarrow K_{0}(A_{\mathbb{R}%
})\rightarrow K_{0}(\mathsf{LCA}_{\mathfrak{A}}^{\ast})\rightarrow
\mathbb{K}_{-1}(\mathfrak{A})\rightarrow0\text{.}%
\]
Here $\mathbb{K}_{-1}$ denotes non-connective $K$-theory. There is a canonical
isomorphism%
\[
K_{1}(\mathsf{LCA}_{\mathfrak{A}}^{\ast})\cong K_{0}(\mathfrak{A}%
,\mathbb{R})\text{,}%
\]
where $K_{0}(\mathfrak{A},\mathbb{R})$ is the relative $K$-group appearing in
the Burns--Flach formulation of the non-commutative ETNC\ in \cite{MR1884523}.
\end{thmannounce}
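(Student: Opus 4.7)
The plan is to mirror the strategy used for regular orders in \cite{etnclca}, but with $\mathsf{LCA}_{\mathfrak{A}}^{\ast}$ designed so that the Gorenstein hypothesis supplies exactly the homological control that regularity previously provided. Concretely, I would take $\mathsf{LCA}_{\mathfrak{A}}^{\ast}$ to be the full subcategory generated, under extensions in $\mathsf{LCA}_{\mathfrak{A}}$, by the discrete finitely generated projective $\mathfrak{A}$-modules, their Pontryagin duals, and the real vector-module part coming from finitely generated $A_{\mathbb{R}}$-modules. The Gorenstein hypothesis, which guarantees that syzygies of finitely generated modules remain totally reflexive and that duality behaves well, is the key input ensuring that admissible kernels and cokernels taken in $\mathsf{LCA}_{\mathfrak{A}}$ stay inside this class, so that $\mathsf{LCA}_{\mathfrak{A}}^{\ast}$ inherits an exact structure.

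The next step is to produce a fiber sequence of non-connective $K$-theory spectra
\[
\mathbb{K}(\mathfrak{A}) \longrightarrow \mathbb{K}(A_{\mathbb{R}}) \longrightarrow \mathbb{K}(\mathsf{LCA}_{\mathfrak{A}}^{\ast}),
\]
by exhibiting the pair consisting of the inclusion of discrete finitely generated projective $\mathfrak{A}$-modules into $\mathsf{LCA}_{\mathfrak{A}}^{\ast}$ and the ``extract the real vector part'' functor to $A_{\mathbb{R}}$-modules as a localization sequence of exact categories in the sense of Schlichting. Applying his localization theorem delivers the fiber sequence, and taking homotopy groups yields the long exact sequence claimed in the theorem. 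The appearance of $\mathbb{K}_{-1}(\mathfrak{A})$ as the final term is a genuine Gorenstein (as opposed to regular) phenomenon: negative $K$-theory of $\mathfrak{A}$ need no longer vanish, which is precisely why one must work non-connectively from the outset and why the sequence can be made to terminate there.

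To extract the isomorphism $K_1(\mathsf{LCA}_{\mathfrak{A}}^{\ast}) \cong K_0(\mathfrak{A},\mathbb{R})$, I would compare the five-term fragment
\[
K_1(\mathfrak{A}) \to K_1(A_{\mathbb{R}}) \to K_1(\mathsf{LCA}_{\mathfrak{A}}^{\ast}) \to K_0(\mathfrak{A}) \to K_0(A_{\mathbb{R}})
\]
of our long exact sequence with the defining sequence of the Burns--Flach relative group and conclude by the five-lemma, after checking that the connecting maps agree with the determinant-theoretic identifications from \cite{etnclca}. The hardest step is the first one: choosing $\mathsf{LCA}_{\mathfrak{A}}^{\ast}$ small enough to avoid the obstructions that blocked the regular-case argument when $\mathfrak{A}$ is merely Gorenstein, yet large enough for the localization-sequence functors to be exact and induce a fibration on $K$-theory. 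The Gorenstein hypothesis must intervene essentially at exactly this point, and getting the closure properties of $\mathsf{LCA}_{\mathfrak{A}}^{\ast}$ right is where the technical heart of the argument lies; once that is done, the remainder is a more or less formal reprise of \cite{etnclca}.
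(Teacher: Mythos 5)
Your overall architecture (shrink the category, let the Gorenstein condition replace regularity, run a localization sequence, finish with the five lemma) matches the paper's intent, but the two steps you yourself identify as the technical heart are not set up in a way that would work. First, the category: the paper takes $\mathsf{LCA}_{\mathfrak{A}}^{\ast}$ to be the idempotent completion of $\mathsf{PLCA}_{\mathfrak{A}}=\mathsf{LCA}_{\mathfrak{A}}\left\langle P_{\oplus}(\mathfrak{A}),I_{\Pi}(\mathfrak{A})\right\rangle$, i.e.\ objects admitting a presentation $P\hookrightarrow X\twoheadrightarrow I$ with $P$ a \emph{countable direct sum} of finitely generated projectives and $I$ a \emph{countable product} of duals of such. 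Your extension closure of finitely generated projectives, their duals, and vector modules omits these infinite objects, and they are not optional: the boundary map to $K_{n-1}(\mathfrak{A})$ and the terminal $\mathbb{K}_{-1}(\mathfrak{A})$ arise precisely because $\mathsf{PLCA}_{\mathfrak{A}}/\mathsf{PLCA}_{\mathfrak{A},cg}\simeq P_{\oplus}(\mathfrak{A})/P_{f}(\mathfrak{A})$ and $K(P_{\oplus}(\mathfrak{A}))=0$ by the Eilenberg swindle, so that the quotient is a delooping of $K(\mathfrak{A})$. Without countable sums there is no swindle and no delooping. Second, the localization sequence you propose --- $P_{f}(\mathfrak{A})$ included into $\mathsf{LCA}_{\mathfrak{A}}^{\ast}$ with an ``extract the real vector part'' functor to $A_{\mathbb{R}}$-modules --- does not exist as stated: there is no exact vector-part functor, and $A_{\mathbb{R}}$ is not the quotient. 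In the paper $K(A_{\mathbb{R}})$ is identified with $K(\mathsf{PLCA}_{\mathfrak{A},cg})$, the \emph{sub}category of compactly generated objects (via a resolution-theorem argument using $\mathsf{PLCA}_{\mathfrak{A},\mathbb{R}D}$ and Pontryagin duality), while $K(\mathfrak{A})$ sits in the fiber; you have the roles of sub and quotient essentially inverted.

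Two smaller but substantive misattributions: the exact structure on the subcategory does not need Gorenstein at all (extension closure of $\mathsf{C}\left\langle\mathsf{P},\mathsf{I}\right\rangle$ is a formal $3\times3$-diagram argument valid for any order), and the Gorenstein hypothesis enters not through totally reflexive syzygies but through the single computation $(A_{\mathbb{R}}/\mathfrak{A})^{\vee}\cong\mathfrak{A}^{\ast}=\operatorname{Hom}_{\mathbb{Z}}(\mathfrak{A},\mathbb{Z})$, which is projective exactly when $\mathfrak{A}$ is Gorenstein; this is what puts $A_{\mathbb{R}}$ (and hence all vector modules, after idempotent completion) into the category and makes the identification $K(\mathsf{PLCA}_{\mathfrak{A},cg})\simeq K(A_{\mathbb{R}})$ possible. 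Finally, for $K_{1}(\mathsf{LCA}_{\mathfrak{A}}^{\ast})\cong K_{0}(\mathfrak{A},\mathbb{R})$ the five lemma alone is not enough: one must first construct the comparison map (the paper uses the Nenashev double-exact-sequence representative $\left\langle\left\langle P,\varphi,Q\right\rangle\right\rangle$ from Part II) and verify that the relevant squares commute; without that explicit map the five lemma gives no canonical isomorphism.
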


This will be Theorem \ref{thm_OrdinaryKGroupsLongExactSequence}.\ If
$\mathfrak{A}$ is additionally a regular order (e.g., hereditary), this
sequence agrees with the one of \cite[Theorem 11.2]{etnclca}, and moreover
$\mathbb{K}_{n}(\mathsf{LCA}_{\mathfrak{A}}^{\ast})=0$ for $n\leq-1$ in this
case. Although they have the same $K$-theory, the category $\mathsf{LCA}%
_{\mathfrak{A}}^{\ast}$ will be strictly smaller than $\mathsf{LCA}%
_{\mathfrak{A}}$ also in this case. As before, in the case $\mathfrak{A}%
=\mathbb{Z}$ the universal determinant functor is the ordinary Haar measure.
This remains true also for our smaller category $\mathsf{LCA}_{\mathbb{Z}%
}^{\ast}\subset\mathsf{LCA}_{\mathbb{Z}}$.

\begin{thmannounce}
The Haar functor $Ha:\mathsf{LCA}_{\mathbb{Z}}^{\ast\times}\rightarrow
\mathsf{Tors}(\mathbb{R}_{>0}^{\times})$ is the universal determinant functor
of the category $\mathsf{LCA}_{\mathbb{Z}}^{\ast}$. Here

\begin{enumerate}
\item for any LCA\ group $G$, $Ha(G)$ denotes the $\mathbb{R}_{>0}^{\times}%
$-torsor of all Haar measures on $G$, and

\item Deligne's Picard groupoid of virtual objects for $\mathsf{LCA}%
_{\mathbb{Z}}^{\ast}$ turns out to be isomorphic to the Picard groupoid of
$\mathbb{R}_{>0}^{\times}$-torsors.
\end{enumerate}
\end{thmannounce}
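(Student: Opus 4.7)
The plan is to leverage the $K$-theoretic computation of Theorem \ref{thm_OrdinaryKGroupsLongExactSequence} together with the fact, already established in \cite{etnclca}, that the Haar functor is the universal determinant functor on the \emph{larger} category $\mathsf{LCA}_{\mathbb{Z}}$. With this in hand, everything reduces to showing that universality survives the restriction to the subcategory $\mathsf{LCA}_{\mathbb{Z}}^{\ast}$, which is a matter of $K$-theoretic bookkeeping in low degrees.

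Specialising Theorem \ref{thm_OrdinaryKGroupsLongExactSequence} to $\mathfrak{A}=\mathbb{Z}$, which is regular and therefore satisfies $\mathbb{K}_{-1}(\mathbb{Z})=0$, the tail of the long exact sequence collapses to
\begin{equation*}
K_{0}(\mathbb{Z})\xrightarrow{\mathrm{id}} K_{0}(\mathbb{R})\rightarrow K_{0}(\mathsf{LCA}_{\mathbb{Z}}^{\ast})\rightarrow 0\text{,}
\end{equation*}
forcing $K_{0}(\mathsf{LCA}_{\mathbb{Z}}^{\ast})=0$. The same theorem identifies $K_{1}(\mathsf{LCA}_{\mathbb{Z}}^{\ast})$ with $K_{0}(\mathbb{Z},\mathbb{R})$, and the classical sequence $K_{1}(\mathbb{Z})\to K_{1}(\mathbb{R})\to K_{0}(\mathbb{Z},\mathbb{R})\to 0$ identifies the latter with $\mathbb{R}^{\times}/\{\pm 1\}=\mathbb{R}_{>0}^{\times}$. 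Now a strictly commutative Picard groupoid is classified up to equivalence by $\pi_{0}$, $\pi_{1}$ and a $k$-invariant $\pi_{0}\otimes \mathbb{Z}/2\to \pi_{1}$; since $\pi_{0}=K_{0}(\mathsf{LCA}_{\mathbb{Z}}^{\ast})=0$ kills the $k$-invariant, Deligne's groupoid of virtual objects is canonically equivalent to $\mathsf{Tors}(\mathbb{R}_{>0}^{\times})$. This already proves~(2).

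For~(1), the functor $Ha$ on $\mathsf{LCA}_{\mathbb{Z}}^{\ast}$ is the restriction of the Haar functor on $\mathsf{LCA}_{\mathbb{Z}}$, so it is a determinant functor and by the universal property factors through Deligne's groupoid, which I write $V(-)$, as $\mathsf{LCA}_{\mathbb{Z}}^{\ast\times}\to V(\mathsf{LCA}_{\mathbb{Z}}^{\ast})\to \mathsf{Tors}(\mathbb{R}_{>0}^{\times})$. Since $\pi_{0}$ is trivial on both sides, the second arrow is an equivalence precisely when it is an isomorphism on $\pi_{1}=K_{1}$. Factoring further through $V(\mathsf{LCA}_{\mathbb{Z}})$ and invoking \cite{etnclca}, this induced map is identified with the map $K_{1}(\mathsf{LCA}_{\mathbb{Z}}^{\ast})\to K_{1}(\mathsf{LCA}_{\mathbb{Z}})$ coming from the inclusion of categories; the remark following Theorem \ref{thm_OrdinaryKGroupsLongExactSequence} that the two long exact sequences agree in the regular case, combined with the five lemma, then forces this to be an isomorphism. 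The main obstacle, and the only step that really requires unpacking the definition of the subcategory $\mathsf{LCA}_{\mathbb{Z}}^{\ast}$, is verifying that the two long exact sequences actually fit into a \emph{functorial} ladder diagram whose vertical arrows on the $K_{n}(\mathfrak{A})$ and $K_{n}(A_{\mathbb{R}})$ terms are identities, so that the five lemma legitimately applies; once this compatibility is pinned down, the rest of the argument is formal Picard-groupoid theory.
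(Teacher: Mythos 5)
Your proposal is correct and is essentially the argument the paper intends: the paper gives no separate proof of this statement beyond the remark that it is ``exactly as [Theorem 12.8, etnclca]'', the point being precisely that $K_{0}(\mathsf{LCA}_{\mathbb{Z}}^{\ast})=0$ and $K_{1}(\mathsf{LCA}_{\mathbb{Z}}^{\ast})\cong\mathbb{R}_{>0}^{\times}$ follow from Theorem \ref{thm_OrdinaryKGroupsLongExactSequence}, that the comparison with $\mathsf{LCA}_{\mathbb{Z}}$ in the regular case (Theorem \ref{thm_Sequence1}) identifies the $K_{1}$'s, and that vanishing $\pi_{0}$ kills the $k$-invariant so the virtual-object Picard groupoid is $\mathsf{Tors}(\mathbb{R}_{>0}^{\times})$. (Only a terminological quibble: Deligne's groupoid of virtual objects is a commutative, not \emph{strictly} commutative, Picard groupoid --- but your actual argument via $\pi_{0}=0$ is the right one and does not need strictness.)
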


This is exactly as \cite[Theorem 12.8]{etnclca}, which was for the bigger
category $\mathsf{LCA}_{\mathbb{Z}}$. In Part II of this series
\cite{etnclca2}, we had introduced double exact sequences $\left\langle
\left\langle P,\varphi,Q\right\rangle \right\rangle $.

\begin{thmannounce}
Let $A$ be a finite-dimensional semisimple $\mathbb{Q}$-algebra and
$\mathfrak{A}\subset A$ an order. Then the map%
\begin{equation}
K_{0}(\mathfrak{A},\mathbb{R})\longrightarrow K_{1}(\mathsf{LCA}%
_{\mathfrak{A}}^{\ast}) \label{ler2a}%
\end{equation}
sending $[P,\varphi,Q]$ to the double exact sequence $\left\langle
\left\langle P,\varphi,Q\right\rangle \right\rangle $ is a well-defined
morphism from the Bass--Swan to the Nenashev presentation. If $\mathfrak{A}$
is a Gorenstein order, then this map is an isomorphism.
\end{thmannounce}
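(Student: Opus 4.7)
The assignment $[P,\varphi,Q]\mapsto\langle\langle P,\varphi,Q\rangle\rangle$ was introduced in \cite{etnclca2} as a map into the Nenashev presentation of $K_{1}(\mathsf{LCA}_{\mathfrak{A}})$. The first task here is to show that every object appearing in the double exact sequence $\langle\langle P,\varphi,Q\rangle\rangle$ already belongs to the full subcategory $\mathsf{LCA}_{\mathfrak{A}}^{\ast}$. Since $P,Q$ are finitely generated projective and enter the construction only through themselves, their real scalar extensions $P_{\mathbb{R}},Q_{\mathbb{R}}$, and the compact quotients $P_{\mathbb{R}}/P,Q_{\mathbb{R}}/Q$, this reduces to a direct check against the defining conditions of $\mathsf{LCA}_{\mathfrak{A}}^{\ast}$. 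The Bass--Swan relations then translate into Nenashev relations verbatim as in \cite{etnclca2}: the degenerate datum $[P,\mathrm{id},P]$ produces a split double exact sequence; additivity in short exact sequences of triples becomes a Nenashev stacking relation; and the composition rule $[P,\psi\varphi,R]=[P,\varphi,Q]+[Q,\psi,R]$ is the standard three-term Nenashev relation.

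\textbf{Isomorphism under the Gorenstein hypothesis via five lemma.} I would juxtapose the classical Bass--Swan localization sequence
\[
K_{1}(\mathfrak{A})\to K_{1}(A_{\mathbb{R}})\to K_{0}(\mathfrak{A},\mathbb{R})\to K_{0}(\mathfrak{A})\to K_{0}(A_{\mathbb{R}})
\]
with the five-term truncation of the sequence of Theorem \ref{thm_OrdinaryKGroupsLongExactSequence},
\[
K_{1}(\mathfrak{A})\to K_{1}(A_{\mathbb{R}})\to K_{1}(\mathsf{LCA}_{\mathfrak{A}}^{\ast})\to K_{0}(\mathfrak{A})\to K_{0}(A_{\mathbb{R}}),
\]
connected by the identity at the outer four terms and by \eqref{ler2a} in the middle. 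Commutativity of the left square reduces to the direct check on Nenashev cycles that $\langle\langle V,\varphi,V\rangle\rangle$ represents the image in $K_{1}(\mathsf{LCA}_{\mathfrak{A}}^{\ast})$ of the automorphism $\varphi\in\mathrm{Aut}(V)$ of a projective $A_{\mathbb{R}}$-module $V$, which is visible from unpacking the construction $\langle\langle\cdot,\cdot,\cdot\rangle\rangle$ in this diagonal case.

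\textbf{The boundary compatibility.} The remaining step --- matching the Bass--Swan boundary $[P,\varphi,Q]\mapsto[P]-[Q]\in K_{0}(\mathfrak{A})$ with the boundary of Theorem \ref{thm_OrdinaryKGroupsLongExactSequence} evaluated on $\langle\langle P,\varphi,Q\rangle\rangle$ --- is where I expect the real work. The boundary of the new localization sequence arises from a zig-zag in the fibration machinery producing Theorem \ref{thm_OrdinaryKGroupsLongExactSequence}, and one must extract its value on a Nenashev cycle. The plan is to model the computation on the argument in \cite{etnclca} for the regular case, with the Gorenstein hypothesis ensuring that the projective resolutions needed to keep all intermediate objects inside $\mathsf{LCA}_{\mathfrak{A}}^{\ast}$ are available (this is exactly the flexibility one loses when dropping regularity, and is the reason $\mathsf{LCA}_{\mathfrak{A}}^{\ast}$ had to be introduced in the first place). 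Once this compatibility is verified, the five lemma upgrades \eqref{ler2a} to an isomorphism and completes the proof.
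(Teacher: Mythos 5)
Your proposal matches the paper's proof in both structure and substance: the paper likewise observes that all objects of $\left\langle \left\langle P,\varphi,Q\right\rangle \right\rangle$ lie in $\mathsf{PLCA}_{\mathfrak{A}}$ so that well-definedness carries over verbatim from \cite{etnclca2}, and then establishes the isomorphism by running the five lemma on exactly the ladder you describe, comparing the Bass--Swan localization sequence with the sequence of Theorem \ref{thm_OrdinaryKGroupsLongExactSequence} (the only substitutions being Diagram \ref{l_B3} and the quotient equivalence of Lemma \ref{lemma_Equiv} in place of their $\mathsf{LCA}_{\mathfrak{A}}$-analogues). The boundary-compatibility check you flag as the remaining work is indeed the content the paper delegates to the argument of \cite{etnclca2}, adapted to the smaller category.
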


See Theorem \ref{thm_ComparisonMap}. Again, the same statement holds for the
bigger category $\mathsf{LCA}_{\mathfrak{A}}$ if $\mathfrak{A}$ is regular, as
we had shown in \cite{etnclca2}.

All this fits into a bigger picture, which we will not recall in this text.
Instead, in the manuscript \cite{etnclcaAlt} we explain an alternative
construction of the non-commutative Tamagawa numbers based on our viewpoint.
It defines the same Tamagawa numbers as Burns--Flach \cite{MR1884523}, i.e.
leads to a fully equivalent formulation, but the way the Tamagawa number is
defined is quite different.

The category $\mathsf{LCA}_{\mathfrak{A}}^{\ast}$ as well as the bigger
$\mathsf{LCA}_{\mathfrak{A}}$ are closely connected to firstly Clausen's work
on a $K$-theoretic enrichment of the Artin map \cite{clausen}, as well as the
Clausen--Scholze theory of condensed mathematics \cite{condensedmath} as well
as the pyknotic mathematics of Barwick--Haine \cite{pyknotic}.

\begin{acknowledgement}
I heartily thank B. Chow, D. Clausen, B. Drew, B. K\"{o}ck for discussions and
in part helping me with proofs and fixing problems. I thank R. Henrard and
A.-C. van Roosmalen for interesting discussions around how their technology in
\cite{hr}, \cite{hr2} might lead to a quicker proof.
\end{acknowledgement}

\section{Conventions}

In this text the word \emph{ring} refers to a unital associative (not
necessarily commutative) ring. Ring homomorphisms preserve the unit of the
ring. Unless said otherwise, modules are right modules. Given an exact
category $\mathsf{C}$, we write $\mathsf{C}^{ic}$ for the idempotent
completion, \textquotedblleft$\hookrightarrow$\textquotedblright\ for
admissible monics, \textquotedblleft$\twoheadrightarrow$\textquotedblright%
\ for admissible epics, and we generally follow the conventions of B\"{u}hler
\cite{MR2606234}.

Differing from any convention, we call objects $X\in\mathsf{C}$ in a
cocomplete category $\mathsf{C}$ \emph{categorically compact} if
$\operatorname*{Hom}_{\mathsf{C}}(X,-)$ commutes with filtered colimits.
Usually, such objects are merely called compact, but since this potentially
conflicts with the topological meaning of compact, which plays a far bigger
r\^{o}le in this text, it seems best to be careful. These objects are also
called `finitely presented', but again this could potentially cause confusion,
so it is best only to refer to the ring-theoretic concept by these terms.

\section{PI-presentations}

\begin{definition}
\label{def_CPI}Suppose $\mathsf{C}$ is an exact category. Let

\begin{enumerate}
\item $\mathsf{P}$ be a full subcategory of projective objects in $\mathsf{C}$
which is closed under finite direct sums,

\item $\mathsf{I}$ be a full subcategory of injective objects in $\mathsf{C}$
which is closed under finite direct sums.
\end{enumerate}

We write $\mathsf{C}\left\langle \mathsf{P},\mathsf{I}\right\rangle $ for the
full subcategory of objects $X\in\mathsf{C}$ such that an exact sequence%
\[
P\hookrightarrow X\twoheadrightarrow I
\]
with $P\in\mathsf{P}$ and $I\in\mathsf{I}$ exists in $\mathsf{C}$. We call any
such exact sequence a \emph{PI-presentation} for $X$.
\end{definition}

\begin{lemma}
\label{lemma_ExactSeqTo3By3Diagram}Suppose we are in the situation of
Definition \ref{def_CPI}. Assume $\mathsf{C}$ is weakly idempotent
complete\footnote{\cite[\S 7]{MR2606234}, e.g., idempotent complete}. Suppose%
\begin{equation}
X^{\prime}\hookrightarrow X\twoheadrightarrow X^{\prime\prime} \label{lmipy1}%
\end{equation}
is an exact sequence in $\mathsf{C}$ such that $X^{\prime},X^{\prime\prime}%
\in\mathsf{C}\left\langle \mathsf{P},\mathsf{I}\right\rangle $. Pick any
PI-presentations for $X^{\prime}$ and $X^{\prime\prime}$ (where we denote the
objects accordingly with a single prime or double prime superscript). Then one
can extend Sequence \ref{lmipy1} to a commutative diagram%
\begin{equation}%
\xymatrix{
P^{\prime} \ar@{^{(}->}[d] \ar@{^{(}->}[r] & P^{\prime} \oplus P^{\prime
\prime} \ar@{^{(}->}[d] \ar@{->>}[r] &
P^{\prime\prime} \ar@{^{(}->}[d] \\
X^{\prime} \ar@{->>}[d] \ar@{^{(}->}[r] & X \ar@{->>}[d] \ar@{->>}[r] &
X^{\prime\prime} \ar@{->>}[d] \\
I^{\prime} \ar@{^{(}->}[r] & I^{\prime} \oplus I^{\prime\prime} \ar@{->>}[r] &
I^{\prime\prime} \\
}
\label{lmipy3}%
\end{equation}
with exact rows and exact columns. In particular, the middle column is a
PI-presentation for $X$.
\end{lemma}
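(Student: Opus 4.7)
My plan is to construct the middle column explicitly by combining projectivity of $P''$ and injectivity of $I'$, and then to verify its exactness via a cokernel computation.

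First, using that $P''$ is projective and $X \twoheadrightarrow X''$ is an admissible epic, I lift the given map $P'' \hookrightarrow X''$ to a morphism $\tilde p : P'' \to X$. Dually, using that $I'$ is injective and $X' \hookrightarrow X$ is an admissible monic, I extend the given epic $X' \twoheadrightarrow I'$ to a morphism $\tilde q : X \to I'$. The middle vertical arrows will then be $\phi := (\iota, \tilde p) : P' \oplus P'' \to X$, where $\iota$ is the composite $P' \hookrightarrow X' \hookrightarrow X$, and $\psi := (\tilde q, \pi'' \pi) : X \to I' \oplus I''$, where $\pi : X \twoheadrightarrow X''$ and $\pi'' : X'' \twoheadrightarrow I''$ are the given epics. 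With these choices, all eight squares of the $3 \times 3$ diagram commute by direct inspection; however, a quick computation shows $\psi \phi$ equals $(\tilde q \tilde p, 0)$ componentwise, which is nonzero in general. To repair this, I would invoke injectivity of $I'$ a second time: extend $\tilde q \tilde p : P'' \to I'$ along $P'' \hookrightarrow X''$ to a map $\beta : X'' \to I'$, and replace $\tilde q$ by $\tilde q - \beta \pi$. Since $\beta \pi$ vanishes on $X'$, the new $\tilde q$ still extends $X' \twoheadrightarrow I'$, and now $\tilde q \tilde p = 0$, so the middle column becomes a complex.

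Next I would show that $\phi$ is an admissible monic by forming the pullback $Y := X \times_{X''} P''$. Standard base-change in an exact category gives $Y \hookrightarrow X \twoheadrightarrow I''$ exact and identifies the kernel of $Y \twoheadrightarrow P''$ with $X'$. The lift $\tilde p$ furnishes a splitting of the resulting exact sequence $X' \hookrightarrow Y \twoheadrightarrow P''$, so $Y \cong X' \oplus P''$, and through this identification $\phi$ factors as $P' \oplus P'' \xrightarrow{\iota \oplus \operatorname{id}} X' \oplus P'' \cong Y \hookrightarrow X$ — a composition of admissible monics, and therefore admissible monic. A dual argument via the pushout $X \sqcup_{X'} I'$ shows $\psi$ is an admissible epic.

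To conclude, the cokernel of $\phi$ admits a two-step description coming from its factorization above: the cokernel of $\iota \oplus \operatorname{id}$ is $I'$ and the cokernel of $Y \hookrightarrow X$ is $I''$, and the cokernel-of-a-composition argument (valid in weakly idempotent complete exact categories) assembles these into a short exact sequence $I' \hookrightarrow \operatorname{coker}(\phi) \twoheadrightarrow I''$. The corrected $\psi$ kills the image of $\phi$ by construction, so it descends to a morphism $\bar\psi : \operatorname{coker}(\phi) \to I' \oplus I''$, which fits into a morphism of short exact sequences whose outer terms are the identities on $I'$ and $I''$; the short five lemma then forces $\bar\psi$ to be an isomorphism, identifying the middle column as a PI-presentation of $X$. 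I expect the main obstacle to be precisely the correction step for $\tilde q$: without it, the middle column is not even a chain complex, and the cokernel sequence above need not split. Using injectivity of $I'$ twice — once for the extension, once for the correction — is the essential ingredient, dual to (and independent of) the single projectivity argument needed for $\phi$.
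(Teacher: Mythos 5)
Your proof is correct, but it takes a genuinely different route from the paper's. The paper constructs the middle column ``from the quotient side'': it first forms $X/P'$, uses injectivity of $I'$ to split $I'\hookrightarrow X/P'\twoheadrightarrow X''$, and thereby obtains $\psi\colon X\twoheadrightarrow I'\oplus I''$ as a composite of admissible epics; it then sets $Y:=\ker\psi$, invokes weak idempotent completeness (the dual of B\"uhler's Corollary 7.7) to see that the induced map $P'\to Y$ is an admissible monic, and identifies $Y\cong P'\oplus P''$ via Noether's lemma and projectivity of $P''$. Since there $Y$ is defined as the kernel of an already-fixed $\psi$, no compatibility issue between the two middle maps ever arises. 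You instead choose the lift $\tilde p$ and the extension $\tilde q$ independently, which makes the correction $\tilde q\mapsto\tilde q-\beta\pi$ genuinely necessary, and you identify and execute this step correctly (it is a second application of injectivity of $I'$, now along the admissible monic $P''\hookrightarrow X''$). Your subsequent verification --- admissibility of $\phi$ via the pullback $X\times_{X''}P''$, the cokernel-of-a-composite sequence, and the short five lemma --- is sound; note that your pullback $Y\cong X'\oplus P''$ is a larger object than the paper's $Y\cong P'\oplus P''$, but both proofs ultimately rest on the same two splittings (injectivity of $I'$, projectivity of $P''$). Two small remarks: your route never actually needs weak idempotent completeness, since $\phi$ is exhibited directly as a composite of admissible monics and both Noether's lemma and the short five lemma hold in arbitrary exact categories; and the separate pushout argument for $\psi$ being an admissible epic is redundant, as your final short-five-lemma step already yields it.
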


\begin{proof}
First, use the PI-presentation of $X^{\prime}$.\ We get a commutative diagram%
\[%
\xymatrix{
P^{\prime} \ar@{^{(}->}[d] \ar@{^{(}->}[dr] \\
X^{\prime} \ar@{^{(}->}[r] \ar@{->>}[d] & X \\
I^{\prime}
}%
\]
and thus the admissible filtration $P^{\prime}\hookrightarrow X^{\prime
}\hookrightarrow X$ with $P^{\prime}\in\mathsf{P}$. Noether's Lemma
(\cite[Lemma 3.5]{MR2606234}) yields the exact sequence $X^{\prime}/P^{\prime
}\hookrightarrow X/P^{\prime}\twoheadrightarrow X/X^{\prime}$, which after
unravelling the outer terms, is isomorphic to%
\[
I^{\prime}\hookrightarrow X/P^{\prime}\twoheadrightarrow X^{\prime\prime
}\text{.}%
\]
Since $I^{\prime}\in\mathsf{I}$ is injective, the sequence splits. We get%
\begin{equation}
X/P^{\prime}\cong I^{\prime}\oplus X^{\prime\prime}\text{.} \label{lmipy1a}%
\end{equation}
Next, use the PI-presentation of $X^{\prime\prime}$. The direct sum of the
exact sequences%
\begin{equation}
P^{\prime\prime}\hookrightarrow X^{\prime\prime}\overset{q^{\prime\prime}%
}{\twoheadrightarrow}I^{\prime\prime}\qquad\text{and}\qquad0\hookrightarrow
I^{\prime}\overset{1}{\twoheadrightarrow}I^{\prime} \label{lmipy1b}%
\end{equation}
is again exact. As a composition of admissible epics is an admissible epic,
the kernel $Y$ in the following commutative diagram exists.%
\begin{equation}%
\xymatrix{
Y \ar@{^{(}->}[dr] & & P^{\prime\prime} \ar@{^{(}->}[d] \\
P^{\prime} \ar@{..>}[u] \ar@{^{(}->}[r] & X \ar@{->>}[r] \ar@{->>}%
[dr] & X/{P^{\prime}}
\ar@{->>}[d]^{1 \oplus q^{\prime\prime}} \\
& & I^{\prime} \oplus I^{\prime\prime}
}
\label{lmipy2}%
\end{equation}
The right column comes from the sum of sequences in Equation \ref{lmipy1b} and
the isomorphism of Equation \ref{lmipy1a} in the middle term of the right
column. By the universal property of kernels, we obtain a unique arrow
$P^{\prime}\rightarrow Y$. Since $\mathsf{C}$ is weakly idempotent complete,
we may apply the dual of \cite[Corollary 7.7]{MR2606234} and deduce that this
arrow must be an admissible monic. Thus, we obtain the admissible filtration
$P^{\prime}\hookrightarrow Y\hookrightarrow X$ and again by Noether's Lemma
the exact sequence $Y/P^{\prime}\hookrightarrow X/P^{\prime}\overset
{a}{\twoheadrightarrow}X/Y$. Unravelling the right term, this exact sequence
is isomorphic to%
\[
Y/P^{\prime}\hookrightarrow X/P^{\prime}\twoheadrightarrow I^{\prime}\oplus
I^{\prime\prime}\text{.}%
\]
Inspecting Diagram \ref{lmipy2} note that under the isomorphism of Equation
\ref{lmipy1a} the map $a$ is identified with $1\oplus q^{\prime\prime}$. Thus,
$Y/P^{\prime}$ is a kernel of this, and thus isomorphic to $P^{\prime\prime}$.
Hence, $P^{\prime}\hookrightarrow Y\twoheadrightarrow Y/P^{\prime}$ is
isomorphic to $P^{\prime}\hookrightarrow Y\twoheadrightarrow P^{\prime\prime}%
$, which splits since $P^{\prime\prime}\in\mathsf{P}$ is projective, and thus
$Y\cong P^{\prime}\oplus P^{\prime\prime}$. Then the diagonal exact sequence
of Diagram \ref{lmipy2} is a PI-presentation, and moreover the one in our
claim. Going through the maps which we have constructed, we obtain all the
arrows in Diagram \ref{lmipy3}.
\end{proof}

\begin{corollary}
\label{cor_CPICat}Suppose we are in the situation of Definition \ref{def_CPI}
and $\mathsf{C}$ is weakly idempotent complete. Then $\mathsf{C}\left\langle
\mathsf{P},\mathsf{I}\right\rangle $ is extension-closed in $\mathsf{C}$. In
particular, it is a fully exact subcategory of $\mathsf{C}$.
\end{corollary}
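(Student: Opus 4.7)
The corollary should follow almost immediately from Lemma \ref{lemma_ExactSeqTo3By3Diagram}, so the proof proposal is very short. Here is how I would proceed.

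First I would verify extension-closure. Suppose $X' \hookrightarrow X \twoheadrightarrow X''$ is an exact sequence in $\mathsf{C}$ with $X', X'' \in \mathsf{C}\langle \mathsf{P}, \mathsf{I}\rangle$. By definition of $\mathsf{C}\langle \mathsf{P}, \mathsf{I}\rangle$, we can choose PI-presentations of $X'$ and $X''$. Now apply Lemma \ref{lemma_ExactSeqTo3By3Diagram}: its output, Diagram \ref{lmipy3}, contains as its middle column the exact sequence
\[
P' \oplus P'' \hookrightarrow X \twoheadrightarrow I' \oplus I''\text{.}
\]
Since $\mathsf{P}$ and $\mathsf{I}$ are, by hypothesis in Definition \ref{def_CPI}, closed under finite direct sums, we have $P' \oplus P'' \in \mathsf{P}$ and $I' \oplus I'' \in \mathsf{I}$. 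Thus this sequence is a PI-presentation for $X$, so $X \in \mathsf{C}\langle \mathsf{P}, \mathsf{I}\rangle$, proving extension-closure.

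For the second assertion, I would invoke the standard fact (following B\"uhler \cite{MR2606234}) that any full subcategory of an exact category which is closed under extensions inherits a canonical exact structure making the inclusion exact; such a subcategory is called fully exact. Since $\mathsf{C}\langle \mathsf{P}, \mathsf{I}\rangle$ is full in $\mathsf{C}$ by construction and we have just verified extension-closure, this applies. No further obstacle appears; the whole content sits in the lemma already proved.
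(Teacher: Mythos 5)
Your proof is correct and is exactly the paper's argument: apply Lemma \ref{lemma_ExactSeqTo3By3Diagram} to produce the middle-column PI-presentation $P'\oplus P''\hookrightarrow X\twoheadrightarrow I'\oplus I''$, note closure of $\mathsf{P}$ and $\mathsf{I}$ under finite direct sums, and invoke the standard fact that extension-closed full subcategories are fully exact. Nothing is missing.
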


\begin{proof}
The lemma shows that $X$ also has a PI-presentation, so $X\in\mathsf{C}%
\left\langle \mathsf{P},\mathsf{I}\right\rangle $.
\end{proof}

\begin{lemma}
\label{lemma_ProjectivesStayProjectiveInCPI}If $X\in\mathsf{C}\left\langle
\mathsf{P},\mathsf{I}\right\rangle $ is injective (resp. projective) as an
object in $\mathsf{C}$, it is also injective (resp. projective) as an object
in $\mathsf{C}\left\langle \mathsf{P},\mathsf{I}\right\rangle $.
\end{lemma}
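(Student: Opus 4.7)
The plan is to use the intrinsic characterization of injectivity in an exact category: in any exact category $\mathsf{E}$, an object $X$ is injective iff every admissible short exact sequence $X\hookrightarrow Y\twoheadrightarrow Z$ in $\mathsf{E}$ splits (dually for projectivity). So essentially this lemma will reduce to comparing what ``admissible exact sequence'' means in $\mathsf{C}\langle\mathsf{P},\mathsf{I}\rangle$ versus in the ambient $\mathsf{C}$.

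First I would invoke the preceding Corollary \ref{cor_CPICat}: the subcategory $\mathsf{C}\langle\mathsf{P},\mathsf{I}\rangle$ is a fully exact subcategory of $\mathsf{C}$ in the sense of B\"uhler. Concretely, this means its admissible short exact sequences are exactly those admissible short exact sequences of $\mathsf{C}$ whose three terms happen to lie in $\mathsf{C}\langle\mathsf{P},\mathsf{I}\rangle$. Thus every admissible monic (resp. epic) in $\mathsf{C}\langle\mathsf{P},\mathsf{I}\rangle$ is in particular an admissible monic (resp. epic) of $\mathsf{C}$.

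Now suppose $X\in\mathsf{C}\langle\mathsf{P},\mathsf{I}\rangle$ is injective as an object of $\mathsf{C}$, and let $X\hookrightarrow Y\twoheadrightarrow Z$ be any admissible short exact sequence in $\mathsf{C}\langle\mathsf{P},\mathsf{I}\rangle$. By the previous observation, this is also admissible exact in $\mathsf{C}$, so injectivity of $X$ in $\mathsf{C}$ produces a retraction $r\colon Y\to X$ in $\mathsf{C}$. Because $\mathsf{C}\langle\mathsf{P},\mathsf{I}\rangle\subseteq\mathsf{C}$ is a \emph{full} subcategory, $r$ is automatically a morphism of $\mathsf{C}\langle\mathsf{P},\mathsf{I}\rangle$, so the sequence splits there. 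Hence $X$ is injective in $\mathsf{C}\langle\mathsf{P},\mathsf{I}\rangle$. The projective case follows by the evident dual argument (with an admissible sequence $W\hookrightarrow Y\twoheadrightarrow X$ and a section $s\colon X\to Y$).

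There is no real obstacle here: the entire argument rests on (i) the extension-closedness provided by Corollary \ref{cor_CPICat}, which forces admissible exactness in the subcategory to be inherited from $\mathsf{C}$, and (ii) fullness, which ensures that the splitting constructed in the ambient category automatically lies in the subcategory. No use is made of the specific subcategories $\mathsf{P}$ and $\mathsf{I}$ beyond what went into Corollary \ref{cor_CPICat}; in fact, the statement is a general fact about any fully exact subcategory.
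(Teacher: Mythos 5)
Your argument is correct, and it fills in exactly what the paper dismisses as ``Immediate'': since $\mathsf{C}\langle\mathsf{P},\mathsf{I}\rangle$ is a fully exact subcategory, its admissible monics/epics are admissible in $\mathsf{C}$, so the splitting (or, even more directly, the extension of a map along an admissible monic furnished by injectivity in $\mathsf{C}$) exists in $\mathsf{C}$ and lies in the subcategory by fullness. No discrepancy with the paper's approach.
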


\begin{proof}
Immediate.
\end{proof}

In particular, all objects of $\mathsf{P}$ are still projective in
$\mathsf{C}\left\langle \mathsf{P},\mathsf{I}\right\rangle $ and
correspondingly for the injectives in $\mathsf{I}$.

\section{Construction of the category $\mathsf{PLCA}_{\mathfrak{A}}$}

Suppose $A$ is a finite-dimensional semisimple $\mathbb{Q}$-algebra and
$\mathfrak{A}\subset A$ an order. We shall use the category $\mathsf{LCA}%
_{\mathfrak{A}}$ of \cite{etnclca}. We recall that its

\begin{enumerate}
\item objects are locally compact topological right $\mathfrak{A}$-modules, and

\item morphisms are continuous $\mathfrak{A}$-module homomorphisms.
\end{enumerate}

An admissible monic is a closed injective morphism, an admissible epic is an
open surjective morphism. This makes $\mathsf{LCA}_{\mathfrak{A}}$ a
quasi-abelian exact category, generalizing an observation due to
Hoffmann--Spitzweck \cite{MR2329311}.

\begin{proposition}
\label{prop_LCACat}The category $\mathsf{LCA}_{\mathfrak{A}}$ is a
quasi-abelian exact category. There is an exact functor%
\[
(-)^{\vee}:\mathsf{LCA}_{\mathfrak{A}}^{op}\longrightarrow\left.
\mathsf{LCA}_{\mathfrak{A}^{op}}\right.  \qquad\qquad M\longmapsto
\operatorname*{Hom}(M,\mathbb{T})\text{,}%
\]
where the continuous right $\mathfrak{A}$-module homomorphism group
$\operatorname*{Hom}(M,\mathbb{T})$ is equipped with the compact-open topology
(that is: on the level of the underlying LCA group $(M;+)$ this is the
Pontryagin dual), and the left action%
\begin{equation}
(\alpha\cdot\varphi)(m):=\varphi(m\cdot\alpha)\qquad\text{for all}\qquad
\alpha\in\mathfrak{A}\text{, }m\in M\text{.} \label{lmf1}%
\end{equation}
There is a natural equivalence of functors from the identity functor to double
dualization,%
\[
\eta:\operatorname*{id}\longrightarrow(-)^{\vee}\circ\left[  (-)^{\vee
}\right]  ^{op}\text{.}%
\]
In other words: For every object $M\in\mathsf{LCA}_{\mathfrak{A}}$ there
exists a reflexivity isomorphism $\eta(M):M\overset{\sim}{\longrightarrow
}M^{\vee\vee}$, and the isomorphisms $\eta(M)$ are natural in $M$.
\end{proposition}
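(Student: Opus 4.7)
The plan is to reduce each of the three assertions to the classical Pontryagin/Hoffmann--Spitzweck theory for plain LCA groups and check that the $\mathfrak{A}$-structure is preserved under the relevant constructions. The forgetful functor $\mathsf{LCA}_{\mathfrak{A}}\to\mathsf{LCA}$ will play the key role: it is faithful, reflects continuity, and turns the $\mathfrak{A}$-enriched picture into the classical one.

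First, for the quasi-abelian exact structure, I would argue that given an $\mathfrak{A}$-equivariant morphism $f:M\to N$, its kernel and cokernel formed in $\mathsf{LCA}$ (which exist by Hoffmann--Spitzweck \cite{MR2329311}) inherit canonical $\mathfrak{A}$-module structures: the subgroup $\ker(f)\subseteq M$ is stable under the $\mathfrak{A}$-action since $f$ is equivariant, and dually $N/\overline{\operatorname{im}(f)}$ carries a quotient action, continuous because the quotient topology is generated by an open surjection. A morphism is an admissible monic (resp.\ epic) in $\mathsf{LCA}_{\mathfrak{A}}$ iff it is one in $\mathsf{LCA}$, so the kernel--cokernel pairs and the axioms of a quasi-abelian category transfer directly. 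The composability and pullback stability axioms then follow from the corresponding facts for $\mathsf{LCA}$. This part is essentially bookkeeping.

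Second, for the functor $(-)^{\vee}$, I would proceed in three steps. (a) The underlying space $\operatorname{Hom}_{\mathsf{LCA}}(M,\mathbb{T})$ with compact-open topology is LCA by classical Pontryagin duality. (b) The assignment $(\alpha\cdot\varphi)(m):=\varphi(m\alpha)$ is well-defined since $m\mapsto m\alpha$ is a continuous $\mathbb{Z}$-linear endomorphism, and it satisfies $(\alpha\beta)\cdot\varphi=\beta\cdot(\alpha\cdot\varphi)$, which is precisely a \emph{left} $\mathfrak{A}$-action, equivalently a right $\mathfrak{A}^{op}$-action. Continuity of the action map $\mathfrak{A}\times M^{\vee}\to M^{\vee}$ in the compact-open topology amounts to: for every compact $K\subseteq M$ and neighborhood $U\subseteq\mathbb{T}$ of $0$, the preimage basic open set is open; this holds because $K\cdot\alpha\subseteq M$ is again compact and the action is continuous on $M$. (c) Exactness of $(-)^{\vee}$ on $\mathsf{LCA}_{\mathfrak{A}}$ reduces to exactness of Pontryagin duality on $\mathsf{LCA}$, which sends closed injections to open surjections and vice versa.

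Third, for reflexivity, I would take the classical Pontryagin biduality iso $\eta(M):M\to M^{\vee\vee}$ and verify it is $\mathfrak{A}$-equivariant with respect to the induced structure on $M^{\vee\vee}$. Unwinding definitions, if $\widetilde{m}\in M^{\vee\vee}$ is $\widetilde{m}(\varphi)=\varphi(m)$, then $(\widetilde{m}\cdot\alpha)(\varphi)=\widetilde{m}(\alpha\cdot\varphi)=(\alpha\cdot\varphi)(m)=\varphi(m\alpha)=\widetilde{m\alpha}(\varphi)$, so the map is compatible. Naturality in $M$ is inherited from naturality of Pontryagin biduality.

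The main obstacle is point (b) above, the careful verification that the prescribed left action on $M^{\vee}$ is jointly continuous; once that is in place the remaining items are formal consequences of classical Pontryagin duality combined with the compatibility of the forgetful functor with the exact structure. Most of this is, I expect, already recorded in \cite{etnclca}, so the proof in the paper will likely just cite that reference and remark on the one or two points that need verification for the present setup.
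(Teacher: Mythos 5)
Your proposal is correct and matches the paper's approach: the paper simply cites \cite[Proposition 3.5]{etnclca}, where exactly this reduction to classical Pontryagin duality plus the equivariance bookkeeping is carried out, so your closing prediction is accurate. One small slip: the correct identity is $(\alpha\beta)\cdot\varphi=\alpha\cdot(\beta\cdot\varphi)$ (since $((\alpha\beta)\cdot\varphi)(m)=\varphi((m\alpha)\beta)=(\beta\cdot\varphi)(m\alpha)$), which is the genuine left-action axiom; the identity you wrote, $(\alpha\beta)\cdot\varphi=\beta\cdot(\alpha\cdot\varphi)$, is false as stated, though your conclusion that $M^{\vee}$ lands in $\mathsf{LCA}_{\mathfrak{A}^{op}}$ is right.
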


See \cite[Proposition 3.5]{etnclca}. If $\mathfrak{A}$ is commutative, it is
even an exact category with duality in the sense of \cite[Definition
2.1]{MR2600285}.

Let $R$ be a ring. We write $P(R)$ for the category of all projective right
$R$-modules, and $P_{f}(R)$ for the finitely generated projective right
$R$-modules. These are both exact categories in the standard way. These
categories are idempotent complete and split exact.

Write $P_{\oplus}(R)$ for the full subcategory of $P(R)$ whose objects are at
most countable direct sums of objects in $P_{f}(R)$. This is an
extension-closed full subcategory and thus itself an exact category. This
category may also be realized as%
\begin{equation}
P_{\oplus}(R)=\mathsf{Ind}_{\aleph_{0}}^{a}(P_{f}(R))\text{,} \label{laf1}%
\end{equation}
because by \cite[Corollary 3.19]{MR3510209} it is the full subcategory of
countable direct sums of objects in $P_{f}(R)$ inside $\mathsf{Lex}(P_{f}(R))$
and by \cite[Lemma 2.21]{MR3510209} the latter category is $\mathsf{Mod}(R)$.

The following is (in different formulation) due to Akasaki and Linnell.

\begin{lemma}
[Akasaki--Linnell]\label{lemma_SolvableGroupInfiniteIndecompProjective}Suppose
$G$ is a finite group and $R:=\mathbb{Z}[G]$. Then $P_{\oplus}(R)$ is
idempotent complete if and only if $G$ is solvable.
\end{lemma}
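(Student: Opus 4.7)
The plan is to translate idempotent completeness of $P_{\oplus}(R)$ into the classical question of when every countably generated projective $\mathbb{Z}[G]$-module decomposes as a countable direct sum of finitely generated projectives, and then invoke the theorems of Akasaki and Linnell. So the original content in this proof is essentially the reduction step; the hard part will be entirely in the quoted results.

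First I would unpack what idempotent completeness of $P_{\oplus}(R)$ means concretely. By definition objects of $P_{\oplus}(R)$ are the countable direct sums $\bigoplus_{i\in I}P_{i}$ with $P_{i}\in P_{f}(R)$ and $I$ countable. Idempotent completeness is equivalent to the property that every direct summand of such an object again lies in $P_{\oplus}(R)$. Any direct summand of an object of $P_{\oplus}(R)$ is a countably generated projective right $R$-module; conversely, every countably generated projective right $R$-module is a direct summand of a free module on countably many generators, which belongs to $P_{\oplus}(R)$. Hence the characterization I need is: $P_{\oplus}(R)$ is idempotent complete if and only if every countably generated projective right $\mathbb{Z}[G]$-module is isomorphic to a countable direct sum of finitely generated projective $\mathbb{Z}[G]$-modules. (Kaplansky's theorem, that every projective module decomposes into countably generated projective summands, is not strictly needed here but reassures one that no cardinality issues arise when passing to summands.)

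In this form the lemma is precisely the combined content of the theorems of Akasaki and Linnell. The `if' direction is Akasaki's decomposition theorem: for $G$ finite solvable, every countably generated projective $\mathbb{Z}[G]$-module is a direct sum of finitely generated projectives. The `only if' direction is Linnell's converse: for any finite non-solvable $G$ he constructs a countably generated projective $\mathbb{Z}[G]$-module that admits no decomposition into finitely generated projective summands, which gives rise to a non-splitting idempotent endomorphism of a suitable object of $P_{\oplus}(R)$. Both theorems are stated for left modules in the original papers, but this causes no problem because $\mathbb{Z}[G]\cong\mathbb{Z}[G]^{op}$ via $g\mapsto g^{-1}$, so left and right module theory coincide. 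The main (and only genuine) obstacle is the two deep theorems themselves, which in the actual proof will simply be cited.
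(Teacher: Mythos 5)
Your reduction is exactly the paper's first step: via the realization of the idempotent completion of $P_{\oplus}(R)$ as the category of at most countably generated projectives, the lemma becomes the question of whether every countably generated projective $\mathbb{Z}[G]$-module is a countable direct sum of finitely generated projectives. Two points where your citations do not quite match what the literature actually provides. First, the solvable direction is Swan's theorem, not Akasaki's: Swan proves that for $G$ finite solvable every projective $\mathbb{Z}[G]$-module is either finitely generated or free, and the needed decomposition follows from that dichotomy. Second, and more substantively, what Akasaki (and Linnell) actually exhibit for non-solvable $G$ is a non-zero countably generated projective $P$ whose trace ideal $\tau(P)$ is a \emph{proper} ideal of $\mathbb{Z}[G]$ --- not, verbatim, a module with no decomposition into finitely generated summands. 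The paper supplies the missing bridge: any non-zero finitely generated projective over $\mathbb{Z}[G]$ is a generator, i.e.\ has trace ideal all of $\mathbb{Z}[G]$, and since the trace ideal of a direct summand is contained in that of the whole module, $P$ can have no non-zero finitely generated projective summand, hence $P\notin P_{\oplus}(R)$. Your proposal simply asserts the conclusion of this step as part of the quoted theorem; you should either verify that Linnell states it in that form or include the short trace-ideal argument. With that caveat, the architecture of your proof is the same as the paper's and is sound.
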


\begin{proof}
By Equation \ref{laf1} and \cite[Proposition 3.25]{MR3510209} the idempotent
completion of $P_{\oplus}(R)$ is the category $P_{\aleph_{0}}(R)$ of at most
countably generated projective $R$-modules. If $G$ is solvable, Swan
\cite[Theorem 7]{MR0153722} has shown that every projective $R$-module is
either finitely generated or free (or both), so each such is a direct sum of
finitely generated projectives, hence lies in $P_{\oplus}(R)$. On the other
hand, if $G$ is non-solvable, Akasaki exhibits a non-zero countably generated
projective $R$-module $P\in P_{\aleph_{0}}(R)$ with trace ideal $\tau
(M)\subsetneqq\mathbb{Z}[G]$, see \cite[Theorem]{MR671200} (or Linnell
\cite{MR647193}). If $P$ has a non-zero finitely generated projective summand
$P^{\prime}\subset P$, then $\tau(P^{\prime})=\mathbb{Z}[G]$ by
\cite[Corollary 1.4]{MR0304420}, and thus we would have $\tau(P)=\mathbb{Z}%
[G]$ because all maps from a direct summand extend to maps of all of $P$.
However, the latter is impossible by Akasaki's construction. Thus, $P$ has no
finitely generated projective summands and thus $P\notin P_{\oplus}(R)$.
\end{proof}

Note that $P_{\oplus}(\mathfrak{A})$ lies inside $\mathsf{LCA}_{\mathfrak{A}}$
when being regarded as a full subcategory of objects with the discrete
topology. Define $I_{\Pi}(\mathfrak{A})$ as the Pontryagin dual of $P_{\oplus
}(\mathfrak{A}^{op})$. In other words, this is the category of at most
countable products $\prod P_{i}^{\vee}$, where $P_{i}\in P_{f}(\mathfrak{A}%
^{op})$. Under Pontryagin duality these projective left $\mathfrak{A}$-modules
(i.e. right $\mathfrak{A}^{op}$-modules) become injective right $\mathfrak{A}%
$-modules in $\mathsf{LCA}_{\mathfrak{A}}$.

Define%
\begin{equation}
\mathsf{PLCA}_{\mathfrak{A}}:=\mathsf{LCA}_{\mathfrak{A}}\left\langle
P_{\oplus}(\mathfrak{A}),I_{\Pi}(\mathfrak{A})\right\rangle \text{.}
\label{laa0}%
\end{equation}
Since $\mathsf{LCA}_{\mathfrak{A}}$ is quasi-abelian, it is in particular
weakly idempotent complete and thus $\mathsf{PLCA}_{\mathfrak{A}}$ is a fully
exact subcategory of $\mathsf{LCA}_{\mathfrak{A}}$ by Corollary
\ref{cor_CPICat}.

We get a natural extension of Proposition \ref{prop_LCACat}.

\begin{proposition}
\label{prop_PontryaginDualityOnPLCA}The category $\mathsf{PLCA}_{\mathfrak{A}%
}$ is an exact category. The exact Pontryagin duality functor $(-)^{\vee}$ of
Proposition \ref{prop_LCACat} restricts to an exact equivalence of exact
categories%
\begin{align*}
(-)^{\vee}:\mathsf{PLCA}_{\mathfrak{A}}^{op}  &  \longrightarrow
\mathsf{PLCA}_{\mathfrak{A}^{op}}\\
M  &  \longmapsto\operatorname*{Hom}(M,\mathbb{T})\text{.}%
\end{align*}
We usually regard the objects of $\mathsf{PLCA}_{\mathfrak{A}^{op}}$ as
topological left $\mathfrak{A}$-modules. If $\mathfrak{A}$ is commutative,
$\mathfrak{A}=\mathfrak{A}^{op}$, and this functor makes $\mathsf{PLCA}%
_{\mathfrak{A}}$ an exact category with duality.
\end{proposition}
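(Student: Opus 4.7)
The plan is to reduce every claim to the corresponding statement already established for the ambient category $\mathsf{LCA}_{\mathfrak{A}}$ in Proposition \ref{prop_LCACat}. The exact structure on $\mathsf{PLCA}_{\mathfrak{A}}$ is immediate: $\mathsf{LCA}_{\mathfrak{A}}$ is quasi-abelian, hence weakly idempotent complete, so $\mathsf{PLCA}_{\mathfrak{A}}$ is a fully exact subcategory by Corollary \ref{cor_CPICat}. Once the duality functor is shown to restrict, its exactness on $\mathsf{PLCA}_{\mathfrak{A}}$ is automatic, since a fully exact subcategory inherits the same class of short exact sequences from the ambient category.

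The main task is therefore to check that $(-)^\vee$ actually lands in $\mathsf{PLCA}_{\mathfrak{A}^{op}}$. Given $M\in\mathsf{PLCA}_{\mathfrak{A}}$, I would fix a PI-presentation
\[
P\hookrightarrow M\twoheadrightarrow I
\]
with $P=\bigoplus_{j\in J}Q_{j}$, each $Q_{j}\in P_{f}(\mathfrak{A})$ and $|J|\leq\aleph_{0}$, and $I=\prod_{i\in\mathcal{I}}R_{i}^{\vee}$, each $R_{i}\in P_{f}(\mathfrak{A}^{op})$ and $|\mathcal{I}|\leq\aleph_{0}$. Dualizing with the exact functor of Proposition \ref{prop_LCACat} yields an exact sequence $I^{\vee}\hookrightarrow M^{\vee}\twoheadrightarrow P^{\vee}$ in $\mathsf{LCA}_{\mathfrak{A}^{op}}$. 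Classical Pontryagin duality, now read off equivariantly via (\ref{lmf1}), interchanges the countable product of compacts with the countable direct sum of their duals: $I^{\vee}=(\prod_{i}R_{i}^{\vee})^{\vee}\cong\bigoplus_{i}R_{i}^{\vee\vee}\cong\bigoplus_{i}R_{i}$ by reflexivity, which lies in $P_{\oplus}(\mathfrak{A}^{op})$, while $P^{\vee}=(\bigoplus_{j}Q_{j})^{\vee}\cong\prod_{j}Q_{j}^{\vee}$ lies in $I_{\Pi}(\mathfrak{A}^{op})$ by the very definition of the latter category. Hence $I^{\vee}\hookrightarrow M^{\vee}\twoheadrightarrow P^{\vee}$ is a PI-presentation of $M^{\vee}$, so $M^{\vee}\in\mathsf{PLCA}_{\mathfrak{A}^{op}}$.

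The equivalence then follows by restricting the reflexivity natural transformation $\eta$ of Proposition \ref{prop_LCACat}: each $\eta(M)\colon M\overset{\sim}{\to}M^{\vee\vee}$ is already an isomorphism in the ambient category, and since the previous step shows dualization preserves $\mathsf{PLCA}$-membership in both directions, the restricted $\eta$ exhibits $(-)^{\vee}$ as a quasi-inverse of itself. The commutative case is a direct consequence: the identities and pairings making $\mathsf{LCA}_{\mathfrak{A}}$ an exact category with duality restrict verbatim to the fully exact subcategory $\mathsf{PLCA}_{\mathfrak{A}}$. The only genuinely delicate point is the module-theoretic identification $(\prod_{i}R_{i}^{\vee})^{\vee}\cong\bigoplus_{i}R_{i}$ in the middle paragraph: on underlying LCA groups it is standard, but one must verify that the left $\mathfrak{A}$-action induced on the dual through formula (\ref{lmf1}) matches the natural direct-sum $\mathfrak{A}^{op}$-action on the $R_{i}$; this is a straightforward unravelling of the definitions, not a genuine obstacle.
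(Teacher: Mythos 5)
Your proposal is correct and follows the same route as the paper: dualize a PI-presentation $P\hookrightarrow M\twoheadrightarrow I$ to get $I^{\vee}\hookrightarrow M^{\vee}\twoheadrightarrow P^{\vee}$ and observe that, by the very definition of $I_{\Pi}$ as the dual of $P_{\oplus}$ of the opposite ring, this is again a PI-presentation. The paper states this in one line ("by construction"); you simply spell out the product/sum interchange and the reflexivity argument that the paper leaves implicit.
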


\begin{proof}
If $P\hookrightarrow X\twoheadrightarrow I$ is a\ PI-presentation for $X$, the
duality functor sends it to%
\[
I^{\vee}\hookrightarrow X^{\vee}\twoheadrightarrow P^{\vee}\text{,}%
\]
but by construction $I^{\vee}\in P_{\oplus}(\mathfrak{A}^{op})$ and $P^{\vee
}\in I_{\Pi}(\mathfrak{A}^{op})$, giving a PI-presentation of $X^{\vee}$.
\end{proof}

\begin{lemma}
\label{lemma_ObjectsInIProdCompactConnected}All objects in $I_{\Pi
}(\mathfrak{A})$ are compact\footnote{in the sense of topology} connected.
\end{lemma}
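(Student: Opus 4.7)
The plan is to chase the definition and reduce to a statement about torsion-free finitely generated abelian groups.

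An object $M\in I_\Pi(\mathfrak{A})$ is by construction an at most countable product $M=\prod_{i}P_i^\vee$ with $P_i\in P_f(\mathfrak{A}^{op})$. So I would first analyze a single factor $P^\vee$ for $P\in P_f(\mathfrak{A}^{op})$. Since $\mathfrak{A}$ is an order in the finite-dimensional $\mathbb{Q}$-algebra $A$, the ring $\mathfrak{A}$ is finitely generated and torsion-free as a $\mathbb{Z}$-module (it embeds into the $\mathbb{Q}$-vector space $A$). Any $P\in P_f(\mathfrak{A}^{op})$ is a direct summand of some $\mathfrak{A}^n$, and inherits both properties; thus, as an abstract abelian group, $P\cong\mathbb{Z}^r$ for some $r\geq 0$. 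Pontryagin duality then identifies $P^\vee$ with the torus $\mathbb{T}^r$, which is compact and connected.

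For the full object $M=\prod_{i}P_i^\vee$, compactness follows from Tychonoff's theorem, and connectedness from the standard fact that an arbitrary product of connected topological spaces is connected. The countability assumption plays no r\^{o}le here; the two properties pass to arbitrary products. Hence $M$ is compact connected.

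The only step with any content is recognizing that finitely generated projective modules over $\mathfrak{A}$ are free as abelian groups, which is where the hypothesis that $\mathfrak{A}$ is an order (and not merely a ring) enters. Everything else is a routine application of Tychonoff's theorem together with the basic Pontryagin duality dictionary $\mathbb{Z}^r\leftrightarrow\mathbb{T}^r$.
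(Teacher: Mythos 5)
Your proof is correct, but it follows a somewhat different route from the paper's. The paper treats an object of $I_{\Pi}(\mathfrak{A})$ all at once as the Pontryagin dual $P^{\vee}$ of a single discrete module $P\in P_{\oplus}(\mathfrak{A}^{op})$: compactness is immediate because duals of discrete groups are compact, and connectedness follows because $P$, being projective, is $\mathbb{Z}$-torsionfree, and the dual of a discrete torsion-free group is connected (a standard duality fact, cited from Morris). You instead decompose the object into its countably many factors, identify each factor explicitly as a torus $\mathbb{T}^{r}$ via the observation that a finitely generated projective module over an order is $\mathbb{Z}^{r}$ as an abelian group, and then invoke Tychonoff together with the fact that a product of connected spaces is connected. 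Your version is more elementary---it replaces the duality dictionary relating connectedness of a compact group to torsion-freeness of its dual by the explicit computation $(\mathbb{Z}^{r})^{\vee}\cong\mathbb{T}^{r}$---at the cost of passing through the product decomposition and correctly noting that the relevant topology is the product topology. The paper's version is uniform in $P$ and would apply verbatim to the dual of any torsion-free discrete module, not only those assembled from finitely generated pieces. Both arguments are complete.
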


\begin{proof}
We use that $I_{\Pi}(\mathfrak{A})$ is the Pontryagin dual to $P_{\oplus
}(\mathfrak{A}^{op})$. Each object $P\in P_{\oplus}(\mathfrak{A}^{op})$ is
discrete, so $P^{\vee}\in I_{\Pi}(\mathfrak{A})$ is compact. As $P$ is
projective, it is also $\mathbb{Z}$-torsionfree, and thus $P^{\vee}$ is
connected by \cite[Corollary 1 to Theorem 31]{MR0442141}.
\end{proof}

The following observation is trivial.

\begin{lemma}
\label{lemma_FinGenSubmodulesLieInFigGenProjectiveSubmodule}Suppose $P\in
P_{\oplus}(\mathfrak{A})$. If $F$ is a finitely generated submodule of $P$,
then there exists a direct sum splitting%
\begin{equation}
P\cong P_{0}\oplus P_{\infty} \label{laa2a}%
\end{equation}
with $P_{0}\in P_{f}(\mathfrak{A})$, $P_{\infty}\in P_{\oplus}(\mathfrak{A})$
and $F\subseteq P_{0}$. In other words: Every finitely generated
$\mathfrak{A}$-submodule of $P$ is contained in a finitely generated
projective direct summand of $P$.
\end{lemma}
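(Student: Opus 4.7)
The plan is to unfold the definition of $P_{\oplus}(\mathfrak{A})$ and exploit the fact that any element of a direct sum has finite support.

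First I would choose, using the definition of $P_{\oplus}(\mathfrak{A})$, a presentation $P\cong\bigoplus_{i\in\mathbb{N}} P_{i}$ with each $P_{i}\in P_{f}(\mathfrak{A})$ (the index set being at most countable; if it is finite the claim is trivial). Let $\pi_{i}\colon P\twoheadrightarrow P_{i}$ denote the projections. For each element $x\in P$, by the very definition of a direct sum, the set $\{i\in\mathbb{N}\mid \pi_{i}(x)\neq 0\}$ is finite; call it the \emph{support} of $x$.

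Next, I would use the hypothesis that $F$ is finitely generated. Fix finitely many generators $f_{1},\ldots,f_{n}\in F$ of $F$ as an $\mathfrak{A}$-module. Each $f_{j}$ has finite support, so letting $N$ be the maximum of the largest index appearing in the support of any $f_{j}$, every generator lies in the finite partial sum
\[
P_{0}:=\bigoplus_{i=1}^{N} P_{i}\text{.}
\]
Since $F$ is generated by the $f_{j}$ and $P_{0}$ is an $\mathfrak{A}$-submodule containing them, $F\subseteq P_{0}$. By construction $P_{0}$ is a finite direct sum of objects of $P_{f}(\mathfrak{A})$, hence itself lies in $P_{f}(\mathfrak{A})$.

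Finally, setting $P_{\infty}:=\bigoplus_{i>N} P_{i}$, we have a direct sum decomposition $P\cong P_{0}\oplus P_{\infty}$ inside $\mathfrak{A}\text{-Mod}$, and $P_{\infty}$ is again an at most countable direct sum of finitely generated projectives, so $P_{\infty}\in P_{\oplus}(\mathfrak{A})$. There is essentially no obstacle here; the author's label \textquotedblleft trivial\textquotedblright\ is accurate, the only subtlety being to remember that elements of a direct sum (as opposed to a direct product) automatically have finite support, which is what makes a finite generating set fit into finitely many summands.
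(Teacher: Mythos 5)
Your proof is correct and follows essentially the same argument as the paper: write $P$ as an at most countable direct sum of objects of $P_{f}(\mathfrak{A})$, note that each of finitely many generators of $F$ has finite support, and split off the finitely many summands that are hit. The only cosmetic difference is that the paper collects an arbitrary finite index set $\mathcal{I}_{0}$ rather than an initial segment, which avoids even having to order the index set.
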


\begin{proof}
Write $P=\bigoplus_{i\in\mathcal{I}}P_{i}$ with $P_{i}\in P_{f}(\mathfrak{A}%
)$. Let $m_{1},\ldots,m_{n}$ be $\mathfrak{A}$-module generators of $F$. Since
$F\subseteq P$, we can write $m_{j}=\sum\alpha_{j,i}$ such that $\alpha
_{j,i}\in P_{i}$ and these are finite sums. Hence, collecting all the indices
$i$ which occur in these finite sums where $j=1,\ldots,n$, we get a finite
subset $\mathcal{I}_{0}$ of indices within $\mathcal{I}$. Define%
\[
P_{0}:=\bigoplus_{i\in\mathcal{I}_{0}}P_{i}\qquad\text{and}\qquad P_{\infty
}:=\bigoplus_{i\in\mathcal{I}\setminus\mathcal{I}_{0}}P_{i}\text{.}%
\]
Then $P\simeq P_{0}\oplus P_{\infty}$ as desired, $P_{0}\in P_{f}%
(\mathfrak{A})$ because $\mathcal{I}_{0}$ is finite, and $F\subseteq P_{0}$.
\end{proof}

\begin{example}
The property discussed in the previous lemma would in general be false if $P$
were allowed to be an arbitrary (countably generated) projective module. For
example, if $G$ is a non-solvable finite group, by Lemma
\ref{lemma_SolvableGroupInfiniteIndecompProjective} one can find a countably
generated indecomposable projective. Since it admits no non-trivial direct sum
decompositions at all, no splitting as in Equation \ref{laa2a} can exist.
\end{example}

\begin{lemma}
\label{lemma_SplitOffInfiniteProjectivePart}Suppose $X\in\mathsf{PLCA}%
_{\mathfrak{A}}$ has the PI-presentation%
\begin{equation}
P\hookrightarrow X\twoheadrightarrow I\text{.} \label{laa1}%
\end{equation}
Then for any finitely generated $\mathfrak{A}$-module $F\subseteq P$ there exists

\begin{enumerate}
\item a direct sum splitting%
\[
P\cong P_{0}\oplus P_{\infty}%
\]
with $F\subseteq P_{0}$, $P_{0}\in P_{f}(\mathfrak{A})$ and $P_{\infty}\in
P_{\oplus}(\mathfrak{A})$, and

\item a direct sum splitting%
\[
X\cong M\oplus P_{\infty}%
\]
with $M\in\mathsf{PLCA}_{\mathfrak{A}}$ such that $P_{0}\hookrightarrow
M\twoheadrightarrow I$ is a PI-presentation for $M$.
\end{enumerate}
\end{lemma}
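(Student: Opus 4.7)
My plan is to use Lemma \ref{lemma_FinGenSubmodulesLieInFigGenProjectiveSubmodule} to produce (1), to define $M := X/P_\infty$ and invoke Noether's Lemma for its PI-presentation, and finally to split the short exact sequence $P_\infty \hookrightarrow X \twoheadrightarrow M$. The first two steps are essentially routine: the composite $P_\infty \hookrightarrow P \hookrightarrow X$ is an admissible monic in the quasi-abelian category $\mathsf{LCA}_{\mathfrak{A}}$, and Noether's Lemma applied to the filtration $P_\infty \hookrightarrow P \hookrightarrow X$ yields the exact sequence $P_0 \cong P/P_\infty \hookrightarrow M \twoheadrightarrow X/P = I$, which is a PI-presentation of $M$; in particular $M \in \mathsf{PLCA}_{\mathfrak{A}}$.

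The delicate point is that Lemma \ref{lemma_FinGenSubmodulesLieInFigGenProjectiveSubmodule} admits many splittings of $P$ and a general choice will not satisfy (2). The topological obstruction is easy to state: if $P_\infty$ meets the identity component $X_0 \subseteq X$ nontrivially, then any continuous retraction $X \to P_\infty$ must kill $X_0$ (since $P_\infty$ is discrete) yet restrict to the identity on $P_\infty \cap X_0$, which is impossible. I therefore plan to enlarge $F$ to absorb the submodule $P \cap X_0$ before applying the lemma. Note that $X_0$ is a closed $\mathfrak{A}$-submodule of $X$ (continuous $\mathfrak{A}$-multiplication preserves connected components through $0$), hence $P \cap X_0$ is an $\mathfrak{A}$-submodule. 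It is finitely generated as a $\mathbb{Z}$-module: using the structure decomposition $X_0 \cong \mathbb{R}^a \oplus K$ with $K$ compact connected, and the fact that the quotient $X_0 \twoheadrightarrow \mathbb{R}^a$ is proper (its kernel $K$ being compact), the image of the closed discrete subgroup $P \cap X_0$ in $\mathbb{R}^a$ is closed discrete, hence isomorphic to $\mathbb{Z}^b$, while its intersection with $K$ is a discrete subgroup of a compact group and so finite. Since $\mathfrak{A}$ is finitely generated over $\mathbb{Z}$, the submodule $F' := F + (P \cap X_0)$ is a finitely generated $\mathfrak{A}$-submodule of $P$, and applying Lemma \ref{lemma_FinGenSubmodulesLieInFigGenProjectiveSubmodule} to $F'$ yields (1) together with the additional feature $P_\infty \cap X_0 = 0$.

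The main obstacle is (2): producing a continuous $\mathfrak{A}$-linear retraction $r \colon X \to P_\infty$. Any such $r$ necessarily factors through the totally disconnected quotient $\bar{X} := X/X_0$, and by the arrangement just made $P_\infty$ embeds as a closed discrete subgroup of $\bar{X}$. Since $\bar{X}$ is totally disconnected LCA it carries arbitrarily small compact open subgroups $U$; choosing $U$ small enough that $U \cap P_\infty = 0$, the subgroup $U + P_\infty = U \oplus P_\infty$ is open in $\bar{X}$ and the canonical projection $U \oplus P_\infty \twoheadrightarrow P_\infty$ is already a continuous retraction on this open piece. The technical heart is to extend this retraction to all of $\bar{X}$. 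I expect this either from a lifting argument across the discrete quotient $\bar{X}/(U \oplus P_\infty)$ using the projectivity of $P_\infty$ in $\mathsf{LCA}_{\mathfrak{A}}$, or, perhaps more cleanly, by dualising via Proposition \ref{prop_PontryaginDualityOnPLCA} and instead splitting off the compact connected summand $P_\infty^\vee$ from $X^\vee$, where the injectivity of $P_\infty^\vee$ in $\mathsf{LCA}_{\mathfrak{A}^{op}}$ can be brought to bear on the dual PI-presentation.
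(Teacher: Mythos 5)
Your reduction steps are fine and agree with the paper: part (1) via Lemma \ref{lemma_FinGenSubmodulesLieInFigGenProjectiveSubmodule}, the definition $M:=X/P_{\infty}$, and Noether's Lemma applied to $P_{\infty}\hookrightarrow P\hookrightarrow X$ giving the PI-presentation $P_{0}\hookrightarrow M\twoheadrightarrow I$. You have also correctly isolated that the content of the lemma is the splitting of $P_{\infty}\hookrightarrow X\twoheadrightarrow M$ and that the choice of the decomposition $P\cong P_{0}\oplus P_{\infty}$ matters. But the proof of that splitting is missing, and the two routes you sketch for it both fail for reasons of variance. Extending the retraction from the open submodule $U\oplus P_{\infty}$ to all of $X$ (equivalently, splitting the pushout extension of $X/(U\oplus P_{\infty})$ by $P_{\infty}$) is obstructed by a class in $\operatorname{Ext}^{1}\bigl(X/(U\oplus P_{\infty}),P_{\infty}\bigr)$; projectivity of $P_{\infty}$ controls $\operatorname{Ext}^{1}(P_{\infty},-)$, not $\operatorname{Ext}^{1}(-,P_{\infty})$, and the quotient $X/(U\oplus P_{\infty})$ is a finitely generated discrete module (essentially $P_{0}$ modulo something) which is in general neither zero nor projective. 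Dually, injectivity of $P_{\infty}^{\vee}$ lets you extend maps \emph{into} $P_{\infty}^{\vee}$ along monics; it does not produce a section of $X^{\vee}\twoheadrightarrow P_{\infty}^{\vee}$. So neither hoped-for mechanism closes the gap. The paper instead builds the complement explicitly: writing $V\oplus C\hookrightarrow X\twoheadrightarrow D$ via the structure theorem, it sets $M:=(V\oplus C)+P_{0}$, which is open (hence clopen) because $V\oplus C$ is, checks $M\cap P_{\infty}=0$, and kills the quotient $X/(M\oplus P_{\infty})$ by observing it is simultaneously discrete and a quotient of the connected $I$.

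A second, related defect: the submodule you must absorb into $P_{0}$ is not $P\cap X_{0}$ but $J:=P\cap(V\oplus C)$ for the \emph{open} subgroup $V\oplus C$, which is in general larger than $X_{0}=V\oplus C_{0}$. The obstruction you name is incomplete: any continuous retraction $r:X\rightarrow P_{\infty}$ kills not only $X_{0}$ but all of $V\oplus C$ (the compact $C$ maps to a discrete torsion-free module, hence to $0$), so a splitting with your $P_{\infty}$ can only exist if $P_{\infty}\cap(V\oplus C)=0$, a condition your choice $F':=F+(P\cap X_{0})$ does not visibly guarantee. One can rescue this: in $\mathsf{PLCA}_{\mathfrak{A}}$ the group $C/C_{0}$ turns out to be finite (because $(V\oplus C)/J\cong I$ is a product of tori and $J\cong\mathbb{Z}^{b}$ must have dense image in $C/C_{0}$), so $P\cap X_{0}$ has finite index in $J$, and since direct summands of the torsion-free $P$ are saturated, $P\cap X_{0}\subseteq P_{0}$ does force $J\subseteq P_{0}$. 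But none of this is in your write-up, and without it even the corrected retraction problem is not well posed. You should replace the identity-component analysis by the paper's use of $J=P\cap(V\oplus C)$ and then construct the complementary summand $(V\oplus C)+P_{0}$ directly rather than seeking a retraction onto $P_{\infty}$ by homological means.
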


It might be worth unpacking what we are saying here: Given any object $X$ and
any finitely generated submodule in $P$, we can up to a direct summand from
$P_{\oplus}(\mathfrak{A})$ isomorphically replace $X$ by an object whose
PI-presentation has only a finitely generated $P$, and we can demand that the
given $F$ lies entirely in this $P$.

\begin{proof}
By \cite[Lemma 6.5]{etnclca} in the bigger category $\mathsf{LCA}%
_{\mathfrak{A}}$ we get an exact sequence%
\begin{equation}
V\oplus C\hookrightarrow X\twoheadrightarrow D \label{laa3a}%
\end{equation}
with $V$ a vector $\mathfrak{A}$-module, $C$ a compact $\mathfrak{A}$-module
and $D$ a discrete $\mathfrak{A}$-module. Define%
\begin{equation}
J:=P\cap(V\oplus C) \label{laa3}%
\end{equation}
in $\mathsf{LCA}_{\mathfrak{A}}$. Note that both $P$ and $V\oplus C$ are
closed in $X$. As $J$ is closed in $P$, $J$ is discrete. Further, since $P$ is
a projective $\mathfrak{A}$-module, it is $\mathbb{Z}$-torsionfree, so $J$ is
$\mathbb{Z}$-torsionfree as well. As $J$ is closed in $V\oplus C$, its
underlying LCA group must be $\mathbb{Z}^{b}$ for some $b\in\mathbb{Z}_{\geq
0}$ (reason: If $J\hookrightarrow V\oplus C$, then $V^{\vee}\oplus C^{\vee
}\twoheadrightarrow J^{\vee}$ under Pontryagin duality. Here $V^{\vee}\oplus
C^{\vee}$ is a vector module plus a discrete module. All quotients of such
must be $\mathbb{R}^{a}\oplus\mathbb{T}^{b}\oplus\tilde{D}$ with $\tilde{D}$
discrete as an LCA group by \cite[Corollary 2 to Theorem 7]{MR0442141}.
Dualizing back, the underlying LCA group of $J$ must be $\mathbb{R}^{a}%
\oplus\mathbb{Z}^{b}\oplus\tilde{C}$ with $\tilde{C}$ compact. As we already
know that $J$ is discrete and torsionfree, we must have $a=0$ and $\tilde
{C}=0$). Combining these facts, $J$ is a discrete $\mathfrak{A}$-module with
underlying LCA group $\mathbb{Z}^{b}$. It follows that $J$ is a finitely
generated $\mathfrak{A}$-submodule of $P$. Next, define%
\[
J^{\prime}:=J+F\text{.}%
\]
This is still a finitely generated $\mathfrak{A}$-submodule of $P$. Thus, by
Lemma \ref{lemma_FinGenSubmodulesLieInFigGenProjectiveSubmodule} we can find a
direct sum splitting%
\begin{equation}
P\simeq P_{0}\oplus P_{\infty} \label{laa4}%
\end{equation}
with $J^{\prime}\subseteq P_{0}$ and $P_{0}\in P_{f}(\mathfrak{A})$. In the
category $\mathsf{LCA}_{\mathfrak{A}}$ we define%
\begin{equation}
M:=(V\oplus C)+P_{0}\qquad\text{inside}\qquad X\text{.} \label{laa2}%
\end{equation}
Since $D$ in Equation \ref{laa3a} was discrete, $V\oplus C$ is an open
submodule of $X$. Thus, the sum defining $M$ is also an open submodule, thus
clopen. It follows that the inclusion $M\hookrightarrow X$ is an open
admissible monic in $\mathsf{LCA}_{\mathfrak{A}}$. Both $P_{\infty}$ and $M$
are closed submodules of $X$. We claim that%
\[
P_{\infty}\cap M=0\text{.}%
\]
(Proof: Suppose $x\in P_{\infty}\cap M$. As $x$ lies in $M$, we can write
$x=x_{vc}+x_{0}$ with $x_{vc}\in V\oplus C$ and $x_{0}\in P_{0}$ by Equation
\ref{laa2}. Hence, $x_{vc}=x-x_{0}$. As $x\in P_{\infty}\subseteq P$ and
$x_{0}\in P_{0}\subseteq P$, we find $x_{vc}\in P$. Thus, $x_{vc}\in
P\cap(V\oplus C)$ and thus $x_{vc}\in J$ by Equation \ref{laa3}. As
$J\subseteq P_{0}$ by Equation \ref{laa4}, we obtain $x_{vc}\in P_{0}$. It
follows that $x\in P_{0}$. We also have $x\in P_{\infty}$ by assumption and
therefore $x\in P_{0}\cap P_{\infty}=0$, giving the claim.) Thus, $M$ and
$P_{\infty}$ are closed submodules of $X$ with trivial intersection. We get an
exact sequence%
\[
M\oplus P_{\infty}\hookrightarrow X\twoheadrightarrow Q
\]
for some quotient $Q$ in $\mathsf{LCA}_{\mathfrak{A}}$. As $P\subseteq M\oplus
P_{\infty}$, it follows that $Q$ is an admissible quotient of $I$ by Equation
\ref{laa1}. Since $I$ is (compact) connected by Lemma
\ref{lemma_ObjectsInIProdCompactConnected}, so must be $Q$. On the other hand,
since $M$ is open (or: since it contains $V\oplus C$), $Q$ is also necessarily
discrete. Being both connected and discrete, we must have $Q=0$. We get%
\begin{equation}
X\simeq M\oplus P_{\infty} \label{laa5}%
\end{equation}
in $\mathsf{LCA}_{\mathfrak{A}}$. Next, by Noether's Lemma (\cite[Lemma
3.5]{MR2606234}) the admissible filtration%
\[
P_{\infty}\hookrightarrow P\hookrightarrow X
\]
gives rise to the exact sequence%
\[
P/P_{\infty}\hookrightarrow X/P_{\infty}\twoheadrightarrow X/P\text{.}%
\]
We have $P/P_{\infty}\cong P_{0}$ from Equation \ref{laa4}, $X/P\cong I$ from
Equation \ref{laa1}, and $X/P_{\infty}\cong M$ by Equation \ref{laa5}. Thus,
$P_{0}\hookrightarrow M\twoheadrightarrow I$ is exact. Since $P_{0}\in
P_{f}(\mathfrak{A})$ and $I\in I_{\Pi}(\mathfrak{A})$, we deduce
$M\in\mathsf{PLCA}_{\mathfrak{A}}$ from Equation \ref{laa0}. Finally, since
$P_{\infty}\in P_{\oplus}(\mathfrak{A})$, Equation \ref{laa5} is not only a
direct sum splitting in $\mathsf{LCA}_{\mathfrak{A}}$, but even in the fully
exact subcategory $\mathsf{PLCA}_{\mathfrak{A}}$. Finally, $F\subseteq P_{0}$
holds by construction.
\end{proof}

The previous result implies that the objects of $\mathsf{PLCA}_{\mathfrak{A}}$
can, up to direct summands from $P_{\oplus}(\mathfrak{A})$ and $I_{\Pi
}(\mathfrak{A})$, be reduced to such where the PI-presentation is made from
finitely generated discrete projectives and their Pontryagin duals.

\begin{proposition}
\label{prop_PLCAStructureThm}Every object in $\mathsf{PLCA}_{\mathfrak{A}}$ is
isomorphic to an object of the shape%
\[
X\simeq P_{\infty}\oplus I_{\infty}\oplus B
\]
with $P_{\infty}\in P_{\oplus}(\mathfrak{A})$, $I_{\infty}^{\vee}\in
P_{\oplus}(\mathfrak{A}^{op})$ and $B\in\mathsf{PLCA}_{\mathfrak{A}}$ has a
PI-presentation%
\[
P_{0}\hookrightarrow B\twoheadrightarrow I_{0}%
\]
with $P_{0}\in P_{f}(\mathfrak{A})$, $I_{0}^{\vee}\in P_{f}(\mathfrak{A}%
^{op})$.
\end{proposition}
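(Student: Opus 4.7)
The plan is to apply Lemma \ref{lemma_SplitOffInfiniteProjectivePart} twice, the second time after Pontryagin dualization, in order to trim down the projective part of a PI-presentation from both sides.

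First I would pick any PI-presentation $P \hookrightarrow X \twoheadrightarrow I$ of $X$ and apply Lemma \ref{lemma_SplitOffInfiniteProjectivePart} with $F = 0$. This yields a splitting
\[
X \simeq M \oplus P_{\infty}
\]
with $P_{\infty} \in P_{\oplus}(\mathfrak{A})$ and a PI-presentation $P_{0} \hookrightarrow M \twoheadrightarrow I$ for $M$ with $P_{0} \in P_{f}(\mathfrak{A})$. At this point the projective part of $M$ is already finitely generated, but the injective part $I \in I_{\Pi}(\mathfrak{A})$ is in general still an infinite product.

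Next I would invoke the exact Pontryagin duality equivalence $(-)^{\vee}\colon \mathsf{PLCA}_{\mathfrak{A}}^{op} \to \mathsf{PLCA}_{\mathfrak{A}^{op}}$ from Proposition \ref{prop_PontryaginDualityOnPLCA}. Dualizing the above PI-presentation yields
\[
I^{\vee} \hookrightarrow M^{\vee} \twoheadrightarrow P_{0}^{\vee},
\]
a PI-presentation of $M^{\vee}$ in $\mathsf{PLCA}_{\mathfrak{A}^{op}}$, where now $I^{\vee} \in P_{\oplus}(\mathfrak{A}^{op})$ and $P_{0}^{\vee} \in I_{\Pi}(\mathfrak{A}^{op})$ with $(P_{0}^{\vee})^{\vee} \cong P_{0} \in P_{f}(\mathfrak{A})$. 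Since Lemma \ref{lemma_SplitOffInfiniteProjectivePart} applies equally in $\mathsf{PLCA}_{\mathfrak{A}^{op}}$, I apply it to $M^{\vee}$ with $F = 0$, obtaining
\[
M^{\vee} \simeq N \oplus Q_{\infty}
\]
with $Q_{\infty} \in P_{\oplus}(\mathfrak{A}^{op})$ and a PI-presentation $Q_{0} \hookrightarrow N \twoheadrightarrow P_{0}^{\vee}$ with $Q_{0} \in P_{f}(\mathfrak{A}^{op})$.

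Finally, I dualize back. Using reflexivity (Proposition \ref{prop_LCACat}) and the fact that $(-)^{\vee}$ is an equivalence, we obtain
\[
M \simeq N^{\vee} \oplus Q_{\infty}^{\vee}.
\]
Setting $B := N^{\vee}$ and $I_{\infty} := Q_{\infty}^{\vee}$, the dual of the PI-presentation of $N$ gives a PI-presentation
\[
P_{0} \hookrightarrow B \twoheadrightarrow Q_{0}^{\vee}
\]
with $P_{0} \in P_{f}(\mathfrak{A})$ and $(Q_{0}^{\vee})^{\vee} \cong Q_{0} \in P_{f}(\mathfrak{A}^{op})$. Moreover $I_{\infty}^{\vee} \cong Q_{\infty} \in P_{\oplus}(\mathfrak{A}^{op})$. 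Combining with the first splitting yields
\[
X \simeq P_{\infty} \oplus I_{\infty} \oplus B,
\]
which is the desired decomposition.

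I do not expect a serious obstacle here, as the argument is essentially a symmetric double application of the previous lemma. The only point that requires care is verifying that dualization really does transform the PI-presentation of $M$ into one of the form suitable for applying Lemma \ref{lemma_SplitOffInfiniteProjectivePart} to $M^{\vee}$; this is exactly what Proposition \ref{prop_PontryaginDualityOnPLCA} supplies, together with the definitional fact that $I_{\Pi}(\mathfrak{A})$ was built as the Pontryagin dual of $P_{\oplus}(\mathfrak{A}^{op})$.
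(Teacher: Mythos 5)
Your proposal is correct and follows essentially the same route as the paper's own proof: apply Lemma \ref{lemma_SplitOffInfiniteProjectivePart} with $F=0$, Pontryagin-dualize via Proposition \ref{prop_PontryaginDualityOnPLCA}, apply the lemma again to the dual, and dualize back. The paper's proof is just a terser version of exactly this argument.
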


\begin{proof}
Let $X\in\mathsf{PLCA}_{\mathfrak{A}}$ be any object. Pick a PI-presentation
$P\hookrightarrow X\twoheadrightarrow I$. We apply Lemma
\ref{lemma_SplitOffInfiniteProjectivePart} with $F=0$. We get a direct sum
splitting $X\simeq M\oplus P_{\infty}$ in $\mathsf{PLCA}_{\mathfrak{A}}$,
where $M$ has a PI-presentation of the shape%
\[
P_{0}\hookrightarrow M\twoheadrightarrow I
\]
such that $P_{0}\in P_{f}(\mathfrak{A})$. Now apply Pontryagin duality, giving
the exact sequence
\[
I^{\vee}\hookrightarrow M^{\vee}\twoheadrightarrow P_{0}^{\vee}%
\]
in $\mathsf{PLCA}_{\mathfrak{A}^{op}}$. This is a PI-presentation in
$\mathsf{PLCA}_{\mathfrak{A}^{op}}$. Now apply Lemma
\ref{lemma_SplitOffInfiniteProjectivePart} (again with $F=0$). Then dualize back.
\end{proof}

We recall the following standard concept from the theory of topological groups.

\begin{definition}
A subset $U$ of a topological group $G$ is called \emph{symmetric} if it is
closed under taking inverses. A topological group $G$ is called
\emph{compactly generated} if there exists a compact symmetric neighbourhood
$U\subseteq G$ of the neutral element such that $G=\bigcup_{n\geq1}U^{n}$.
\end{definition}

\begin{remark}
Unfortunately, the word \textquotedblleft compactly
generated\textquotedblright\ is also used with a different meaning elsewhere.
Either in a category-theoretic sense related to categorically compact objects,
or in a further topological meaning, probably most familiar in the setting of
compactly generated Hausdorff spaces in homotopy theory; e.g.,
\cite{condensedmath} uses both of these other meanings. This is most
unfortunate, but all uses of these words are well-established in their
respective community of mathematics.
\end{remark}

Let $\mathsf{PLCA}_{\mathfrak{A},cg}$ be the full subcategory of
$\mathsf{PLCA}_{\mathfrak{A}}$ of compactly generated $\mathfrak{A}$-modules,%
\begin{equation}
\mathsf{PLCA}_{\mathfrak{A},cg}:=\mathsf{PLCA}_{\mathfrak{A}}\cap
\mathsf{LCA}_{\mathfrak{A},cg}\text{.} \label{l_cat_cg}%
\end{equation}
Since compactly generated topological modules groups are closed under
extension in $\mathsf{LCA}_{\mathfrak{A}}$ (\cite[Corollary 7.2]{etnclca}),
this is an extension-closed subcategory of $\mathsf{PLCA}_{\mathfrak{A}}$.

\begin{lemma}
We have $\mathsf{PLCA}_{\mathfrak{A},cg}=\mathsf{LCA}_{\mathfrak{A}%
}\left\langle P_{f}(\mathfrak{A}),I_{\Pi}(\mathfrak{A})\right\rangle $, i.e.
the same category can also be described as the full subcategory of objects in
$\mathsf{PLCA}_{\mathfrak{A}}$ which admit a PI-presentation%
\[
P\hookrightarrow X\twoheadrightarrow I
\]
with $P$ finitely generated projective.
\end{lemma}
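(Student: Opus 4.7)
The plan is to prove the two inclusions separately, using Lemma \ref{lemma_SplitOffInfiniteProjectivePart} for the non-trivial direction and the extension-closedness of the compactly generated modules for the easy direction.

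\textbf{Easy direction} $\mathsf{LCA}_{\mathfrak{A}}\langle P_{f}(\mathfrak{A}),I_{\Pi}(\mathfrak{A})\rangle \subseteq \mathsf{PLCA}_{\mathfrak{A},cg}$. Take $X$ with a PI-presentation $P\hookrightarrow X\twoheadrightarrow I$ where $P\in P_{f}(\mathfrak{A})$ and $I\in I_{\Pi}(\mathfrak{A})$. Since $P_{f}(\mathfrak{A})\subseteq P_{\oplus}(\mathfrak{A})$, this exhibits $X\in\mathsf{PLCA}_{\mathfrak{A}}$. For compact generation, note that $P$ is finitely generated over $\mathfrak{A}$, which is itself finitely generated over $\mathbb{Z}$, so $P$ is a finitely generated discrete abelian group, hence compactly generated. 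By Lemma \ref{lemma_ObjectsInIProdCompactConnected}, $I$ is compact, hence compactly generated. Then \cite[Corollary 7.2]{etnclca} (extension-closedness of compactly generated modules in $\mathsf{LCA}_{\mathfrak{A}}$) gives $X\in\mathsf{LCA}_{\mathfrak{A},cg}$, so by (\ref{l_cat_cg}) we are done.

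\textbf{Hard direction} $\mathsf{PLCA}_{\mathfrak{A},cg}\subseteq\mathsf{LCA}_{\mathfrak{A}}\langle P_{f}(\mathfrak{A}),I_{\Pi}(\mathfrak{A})\rangle$. Take $X\in\mathsf{PLCA}_{\mathfrak{A},cg}$ and pick any PI-presentation $P\hookrightarrow X\twoheadrightarrow I$ with $P\in P_{\oplus}(\mathfrak{A})$. Apply Lemma \ref{lemma_SplitOffInfiniteProjectivePart} with $F=0$ to produce a direct sum splitting $X\simeq M\oplus P_{\infty}$ in $\mathsf{PLCA}_{\mathfrak{A}}$, where $P_{0}\hookrightarrow M\twoheadrightarrow I$ is a PI-presentation of $M$ with $P_{0}\in P_{f}(\mathfrak{A})$ and $P_{\infty}\in P_{\oplus}(\mathfrak{A})$.

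The crux is to show $P_{\infty}\in P_{f}(\mathfrak{A})$. Since the splitting lives in $\mathsf{PLCA}_{\mathfrak{A}}$, the projection $X\twoheadrightarrow P_{\infty}$ is an admissible epic in $\mathsf{LCA}_{\mathfrak{A}}$, hence an open continuous surjection. The image of a compact symmetric generating set of $X$ is then a compact symmetric generating set of $P_{\infty}$. But $P_{\infty}$ is discrete, so any compact subset is finite; a discrete group covered by powers of a finite symmetric neighbourhood of the identity is finitely generated as a $\mathbb{Z}$-module, hence also as an $\mathfrak{A}$-module. Combined with $P_{\infty}\in P_{\oplus}(\mathfrak{A})$, this forces $P_{\infty}\in P_{f}(\mathfrak{A})$.

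To finish, form the direct sum of the PI-presentations $P_{0}\hookrightarrow M\twoheadrightarrow I$ and $P_{\infty}\hookrightarrow P_{\infty}\twoheadrightarrow 0$, yielding
\[
P_{0}\oplus P_{\infty}\hookrightarrow M\oplus P_{\infty}\simeq X\twoheadrightarrow I,
\]
which is a PI-presentation of $X$ with $P_{0}\oplus P_{\infty}\in P_{f}(\mathfrak{A})$ and $I\in I_{\Pi}(\mathfrak{A})$. The main obstacle, identifying $P_{\infty}$ as finitely generated from compact generation of $X$, is clean because of the rigidity of the discrete topology; everything else reduces to bookkeeping around Lemma \ref{lemma_SplitOffInfiniteProjectivePart}.
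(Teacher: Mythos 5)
Your proof is correct and follows essentially the same route as the paper: the easy inclusion via extension-closedness of compactly generated modules, and the converse by splitting off $P_{\infty}$ (you via Lemma \ref{lemma_SplitOffInfiniteProjectivePart} directly, the paper via Proposition \ref{prop_PLCAStructureThm}, which is derived from it) and observing that a discrete compactly generated summand must be finitely generated. If anything, your version is slightly more complete, since you explicitly assemble the final PI-presentation with $P_{0}\oplus P_{\infty}\in P_{f}(\mathfrak{A})$, a step the paper leaves implicit.
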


\begin{proof}
(Step 1) Suppose $X$ lies in $\mathsf{LCA}_{\mathfrak{A}}\left\langle
P_{f}(\mathfrak{A}),I_{\Pi}(\mathfrak{A})\right\rangle $. Then%
\[
P\hookrightarrow X\twoheadrightarrow I
\]
is exact with $P$ finitely generated projective and $I\in I_{\Pi}%
(\mathfrak{A})$. By Lemma \ref{lemma_ObjectsInIProdCompactConnected} the
module $I$ is compact, hence compactly generated, and $P$ has $\mathbb{Z}^{n}$
for some finite $n\geq0$ as its underlying LCA group, so it is compactly
generated, too. Thus, $X$ is an extension of compactly generated LCA\ groups,
and thus $X\in\mathsf{PLCA}_{\mathfrak{A},cg}$. (Step 2) Conversely, suppose
$X\in\mathsf{PLCA}_{\mathfrak{A},cg}$. Proposition \ref{prop_PLCAStructureThm}
gives a direct sum splitting $X=P_{\infty}\oplus M\oplus I_{\infty}$. By Step
1 we know that $M$ is compactly generated and $I_{\infty}$ is compact, so $X$
is compactly generated if and only if $P_{\infty}$ is. However, the underlying
LCA group of $P_{\infty}$ is $\bigoplus\mathbb{Z}$, over some index set, and
this is compactly generated only if $P_{\infty}$ is finitely generated.
\end{proof}

\begin{proposition}
\label{prop_LF1}The inclusion $P_{f}(\mathfrak{A})\hookrightarrow P_{\oplus
}(\mathfrak{A})$ is left $s$-filtering.\footnote{This concept originates from
the work of Schlichting \cite{MR2079996}. We use the formulation of
\cite[\S 2.2.2]{MR3510209}.}
\end{proposition}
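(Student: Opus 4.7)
I would verify, one by one, the axioms making up \emph{left $s$-filtering} in the sense of \cite[\S 2.2.2]{MR3510209}: a factorization property for morphisms between $P_\oplus(\mathfrak{A})$ and $P_f(\mathfrak{A})$, and a specialness property for admissible short exact sequences in $P_\oplus(\mathfrak{A})$ with one end in $P_f(\mathfrak{A})$. Both categories are split exact (every short exact sequence of projective modules splits), so admissible monics and epics are just split ones whose kernel or cokernel happens to lie in $P_\oplus(\mathfrak{A})$. The decisive tool is Lemma \ref{lemma_FinGenSubmodulesLieInFigGenProjectiveSubmodule}, which traps any finitely generated submodule of an object of $P_\oplus(\mathfrak{A})$ inside a finitely generated projective direct summand.

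For the filtering half, given a morphism $f \colon A \to X$ with $A \in P_f(\mathfrak{A})$ and $X \in P_\oplus(\mathfrak{A})$, the image $f(A)$ is finitely generated, so Lemma \ref{lemma_FinGenSubmodulesLieInFigGenProjectiveSubmodule} yields a direct sum decomposition $X \cong X_0 \oplus X_\infty$ with $X_0 \in P_f(\mathfrak{A})$, $X_\infty \in P_\oplus(\mathfrak{A})$, and $f(A) \subseteq X_0$. The factorization $A \to X_0 \hookrightarrow X$ then exhibits $f$ as passing through an admissible monic with source in $P_f(\mathfrak{A})$. Should the reference instead formulate filtering via the opposite direction $X \to A$, I would lift a finite generating set of $\operatorname{im}(f)$ into $X$, apply Lemma \ref{lemma_FinGenSubmodulesLieInFigGenProjectiveSubmodule} to the finitely generated submodule they span to get $X \cong X_0 \oplus X_\infty$ with $f(X_0) = \operatorname{im}(f)$, and exploit projectivity of $X_\infty$ to lift $f|_{X_\infty}$ through the surjection $X_0 \twoheadrightarrow \operatorname{im}(f)$; this lift lets me tilt the direct sum so that $f$ vanishes on the new $X_\infty$-summand, producing an admissible epic $X \twoheadrightarrow X_0$ through which $f$ factors.

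For the specialness half, I would take an admissible short exact sequence $K \hookrightarrow X \twoheadrightarrow A$ in $P_\oplus(\mathfrak{A})$ with $A \in P_f(\mathfrak{A})$. Since $A$ is projective, the sequence splits; fixing a section $\sigma \colon A \to X$, the image $\sigma(A)$ is a finitely generated submodule of $X$, so Lemma \ref{lemma_FinGenSubmodulesLieInFigGenProjectiveSubmodule} delivers $X \cong X_0 \oplus X_\infty$ with $\sigma(A) \subseteq X_0 \in P_f(\mathfrak{A})$ and $X_\infty \in P_\oplus(\mathfrak{A})$. The original sequence then decomposes as the direct sum of a short exact sequence $K_0 \hookrightarrow X_0 \twoheadrightarrow A$ living entirely in $P_f(\mathfrak{A})$ with the identity sequence on $X_\infty$, which is exactly the shape of specialness demanded in \cite[\S 2.2.2]{MR3510209}.

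The main obstacle I anticipate is purely bookkeeping: translating the precise axioms of \cite[\S 2.2.2]{MR3510209} into this split exact setting and matching the orientations of arrows in the filtering and specialness diagrams. The essential mathematical content is Lemma \ref{lemma_FinGenSubmodulesLieInFigGenProjectiveSubmodule} together with the projectivity of objects in $P_f(\mathfrak{A})$; I do not expect the Akasaki--Linnell obstruction of Lemma \ref{lemma_SolvableGroupInfiniteIndecompProjective} to intervene, since every splitting I construct merely isolates a finitely generated projective direct summand of an object already presented as a countable direct sum of finitely generated projectives, and never attempts to decompose an arbitrary countably generated projective.
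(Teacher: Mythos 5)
Your proposal is correct and follows essentially the same route as the paper: the left filtering half is verbatim the paper's argument via Lemma \ref{lemma_FinGenSubmodulesLieInFigGenProjectiveSubmodule}. For left specialness the paper is more economical---since the target is projective the admissible epic splits, and the section already yields the required comparison diagram with top row $0\hookrightarrow A\twoheadrightarrow A$---so your extra application of the lemma to trap $\sigma(A)$ in a finitely generated summand is sound but unnecessary; note also that the given sequence need not literally decompose as the direct sum you describe (the epic need not vanish on $X_{\infty}$), but the commutative diagram with exact rows that left specialness actually demands does exist, so this looseness is harmless.
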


\begin{proof}
\textit{(Left filtering)}\ Suppose we are given an arrow $g:Y\rightarrow X$
with $Y\in P_{f}(\mathfrak{A})$ and $X\in P_{\oplus}(\mathfrak{A})$. The
set-theoretic image of $Y$ in $X$ is again a finitely generated module, so
by\ Lemma \ref{lemma_FinGenSubmodulesLieInFigGenProjectiveSubmodule} we find a
direct sum decomposition
\[
X\cong P_{0}\oplus P_{\infty}%
\]
with $P_{0}\in P_{f}(\mathfrak{A})$, $P_{\infty}\in P_{\oplus}(\mathfrak{A})$
and $\operatorname*{im}_{\mathsf{Set}}(g)\subseteq P_{0}$. It follows that the
arrow $g$ factors as $Y\rightarrow P_{0}\hookrightarrow X$, showing the left
filtering property.\newline\textit{(Left special)} Suppose
$e:X\twoheadrightarrow X^{\prime\prime}$ is an admissible epic with $X\in
P_{\oplus}(\mathfrak{A})$ and $X^{\prime\prime}\in P_{f}(\mathfrak{A})$. As
$X^{\prime\prime}$ is projective, the epic splits. We obtain a diagram%
\[%
\xymatrix{
0 \ar@{^{(}->}[r] \ar[d] & 0 \oplus X^{\prime\prime} \ar@{->>}[r] \ar
[d] & X^{\prime\prime} \ar@{=}[d] \\
X^{\prime} \ar@{^{(}->}[r] & X \ar@{->>}[r] & X^{\prime\prime}
}%
\]
showing the left special property.
\end{proof}

\begin{proposition}
\label{prop_LF2}The inclusion $\mathsf{PLCA}_{\mathfrak{A},cg}\hookrightarrow
\mathsf{PLCA}_{\mathfrak{A}}$ is left $s$-filtering.
\end{proposition}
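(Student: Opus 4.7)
By the formulation of \cite[\S 2.2.2]{MR3510209} I must verify two conditions: (a) \emph{left filtering}, i.e., every morphism $g\colon Y\to X$ with $Y\in\mathsf{PLCA}_{\mathfrak{A},cg}$ and $X\in\mathsf{PLCA}_{\mathfrak{A}}$ factors through an admissible monic $M\hookrightarrow X$ with $M\in\mathsf{PLCA}_{\mathfrak{A},cg}$; and (b) \emph{left special}, i.e., every admissible epic $e\colon X\twoheadrightarrow X''$ in $\mathsf{PLCA}_{\mathfrak{A}}$ with $X''\in\mathsf{PLCA}_{\mathfrak{A},cg}$ fits into a commutative diagram with admissible exact rows whose top-middle term lies in $\mathsf{PLCA}_{\mathfrak{A},cg}$.

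For (a), the structure theorem (Proposition \ref{prop_PLCAStructureThm}) provides a decomposition $X\simeq R\oplus X_{cg}$ with $R\in P_{\oplus}(\mathfrak{A})$ discrete and $X_{cg}\in\mathsf{PLCA}_{\mathfrak{A},cg}$. The composition of $g$ with the projection to $R$ is a morphism from a compactly generated topological group to a discrete one, so its image is a finitely generated $\mathfrak{A}$-submodule of $R$. Lemma \ref{lemma_FinGenSubmodulesLieInFigGenProjectiveSubmodule} then provides a finitely generated projective direct summand $R_{0}\subseteq R$ containing this image, so that $g$ factors through the direct summand inclusion $R_{0}\oplus X_{cg}\hookrightarrow X$, whose source lies in $\mathsf{PLCA}_{\mathfrak{A},cg}$. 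The strategy mirrors that of Proposition \ref{prop_LF1}.

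For (b), the kernel $X':=\ker(e)$ lies in $\mathsf{PLCA}_{\mathfrak{A}}$ by the definition of admissible exactness in a fully exact subcategory. The plan is to apply Lemma \ref{lemma_ExactSeqTo3By3Diagram} to the sequence $X'\hookrightarrow X\twoheadrightarrow X''$, choosing a PI-presentation of $X''$ with $P''\in P_{f}(\mathfrak{A})$ (which exists by the lemma preceding Proposition \ref{prop_LF1}). This produces a PI-presentation $P'\oplus P''\hookrightarrow X\twoheadrightarrow I'\oplus I''$ of $X$ with the crucial compatibility that the composition $P'\oplus P''\hookrightarrow X\twoheadrightarrow X''$ equals the projection $P'\oplus P''\twoheadrightarrow P''$ followed by the inclusion $P''\hookrightarrow X''$. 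Next, Lemma \ref{lemma_SplitOffInfiniteProjectivePart} applied with $F:=P''$ gives a direct sum decomposition $X\simeq M\oplus Q$ where $M\in\mathsf{PLCA}_{\mathfrak{A},cg}$ has PI-presentation $P_{0}\hookrightarrow M\twoheadrightarrow I'\oplus I''$ with $P''\subseteq P_{0}\in P_{f}(\mathfrak{A})$, and $Q\in P_{\oplus}(\mathfrak{A})$. Take $B:=M$ and $\beta\colon B\hookrightarrow X$ the direct summand inclusion, producing the candidate diagram
\[
\xymatrix{
N \ar[d] \ar@{^{(}->}[r] & M \ar[d]^{\beta} \ar@{->>}[r] & X'' \ar@{=}[d] \\
X' \ar@{^{(}->}[r] & X \ar@{->>}[r] & X''
}
\]
with $N:=\ker(e|_{M})=X'\cap M$ and left vertical the inclusion $N\hookrightarrow X'$.

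Verifying that $e\circ\beta$ is an admissible epic in $\mathsf{PLCA}_{\mathfrak{A}}$ is the heart of the argument. Surjectivity follows from the compatibility: since $Q\subseteq P'\oplus P''$ and $e$ sends $P'\oplus P''$ into $P''$, we have $e(Q)\subseteq P''$; combined with $P''=e(P'')\subseteq e(M)$ (because $P''\subseteq P_{0}\subseteq M$) and $e(M)+e(Q)=X''$, this gives $e(M)=X''$. Openness is immediate because $M\hookrightarrow X$ is an open admissible monic (the complement $Q$ is discrete) and $e$ is open. The main obstacle is showing $N\in\mathsf{PLCA}_{\mathfrak{A}}$, which is not implied by extension-closedness alone. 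I would prove it by constructing a PI-presentation of $N$ directly: the induced map $P_{0}\twoheadrightarrow P''$ splits since $P''$ is projective, so its kernel $K$ is a direct summand of $P_{0}$ and hence lies in $P_{f}(\mathfrak{A})$; the induced projection $I'\oplus I''\twoheadrightarrow I''$ has kernel $I'\in I_{\Pi}(\mathfrak{A})$. A $3\times 3$-lemma argument, using the PI-presentations of $M$ and $X''$ and the compatibility established above, yields the PI-presentation $K\hookrightarrow N\twoheadrightarrow I'$ of $N$, with the admissibility of the epic $N\twoheadrightarrow I'$ coming from the open mapping theorem applied to $N$, which is $\sigma$-compact as a closed subgroup of the compactly generated $M$.
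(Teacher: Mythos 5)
Your proof is correct, and while the left-filtering half coincides with the paper's argument (decompose $X$ via Proposition \ref{prop_PLCAStructureThm}, note that the component of $g$ landing in the discrete summand has finitely generated image, and absorb it into a finitely generated projective summand via Lemma \ref{lemma_FinGenSubmodulesLieInFigGenProjectiveSubmodule}), your left-special half takes a genuinely different route. The paper applies Lemma \ref{lemma_SplitOffInfiniteProjectivePart} \emph{twice} --- to $X$ with $F=P^{\prime\prime}$ and then again to $X^{\prime}$ with $F=J+\tilde{P}$ --- and then carefully matches up and strips off the two infinite discrete summands $P_{\infty}$ and $P_{\infty}^{\prime}$ so that the top row $X_{new}^{\prime}\hookrightarrow P_{00}^{\prime}\oplus X_{new}\twoheadrightarrow X^{\prime\prime}$ of Diagram \ref{laa9b} consists of compactly generated objects; the kernel term is thus \emph{produced} by the splitting lemma rather than analysed. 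You instead apply the splitting lemma only once, restrict $e$ to the clopen summand $M$, and show directly that the kernel $N=X^{\prime}\cap M$ lies in $\mathsf{PLCA}_{\mathfrak{A},cg}$ by exhibiting the PI-presentation $K\hookrightarrow N\twoheadrightarrow I^{\prime}$ through a $3\times3$-lemma. This is shorter and avoids the paper's bookkeeping of infinite summands, at the price of verifying the $3\times3$ hypotheses: all three rows must be exact in $\mathsf{LCA}_{\mathfrak{A}}$ (the middle one because $e|_{M}$ is an open surjection, which your surjectivity and openness arguments correctly supply) and all squares must commute (which follows from the commutativity of Diagram \ref{lmipy3} restricted to $M\subseteq X$ and $P_{0}\subseteq P^{\prime}\oplus P^{\prime\prime}$). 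These checks do go through. Two small remarks: once the $3\times3$-lemma has delivered the exact sequence $K\hookrightarrow N\twoheadrightarrow I^{\prime}$, the closing appeal to the open mapping theorem is redundant; and since $K\in P_{f}(\mathfrak{A})$, the lemma identifying $\mathsf{PLCA}_{\mathfrak{A},cg}$ with $\mathsf{LCA}_{\mathfrak{A}}\left\langle P_{f}(\mathfrak{A}),I_{\Pi}(\mathfrak{A})\right\rangle$ gives $N\in\mathsf{PLCA}_{\mathfrak{A},cg}$ immediately, which is what left specialness requires of the top row.
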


\begin{proof}
\textit{(Left filtering)} Suppose we are given an arrow $Y\rightarrow X$ with
$Y\in\mathsf{PLCA}_{\mathfrak{A},cg}$ and $X\in\mathsf{PLCA}_{\mathfrak{A}}$.
We apply Proposition \ref{prop_PLCAStructureThm} to $X$ and get the diagram%
\[%
\xymatrix{
& M \oplus I_{\infty} \ar@{^{(}->}[d] \\
Y \ar[dr]_{h} \ar[r] & P_{\infty} \oplus M \oplus I_{\infty} \ar@{->>}[d] \\
& P_{\infty}.
}%
\]
We first work entirely on the level of $\mathsf{LCA}_{\mathbb{Z}}$: Since $Y$
is compactly generated, we get some isomorphism $Y\simeq C\oplus\mathbb{Z}%
^{n}\oplus\mathbb{R}^{m}$ for some $n,m$ and $C$ compact, \cite[Theorem
2.5]{MR0215016}. As $C$ is compact, its set-theoretic image under $h$ is
compact, but since $P_{\infty}$ is discrete and torsionfree, $h(C)$ must be
zero. Moreover, the set-theoretic image of $\mathbb{R}^{m}$ under $h$ is
connected and thus also zero. It follows that the set-theoretic image of $h$
agrees with the image $h(\mathbb{Z}^{n})$, and thus must be a finitely
generated $\mathbb{Z}$-submodule of $P_{\infty}$. Now return to $\mathsf{LCA}%
_{\mathfrak{A}}$. By the previous consideration, the image under $h$ must be a
finitely generated $\mathfrak{A}$-submodule of $P_{\infty}$. Thus, by Lemma
\ref{lemma_FinGenSubmodulesLieInFigGenProjectiveSubmodule} we find some
$P_{\infty,0}\in P_{f}(\mathfrak{A})$ and $P_{\infty,\infty}\in P_{\oplus
}(\mathfrak{A})$ such that $P_{\infty}\simeq P_{\infty,0}\oplus P_{\infty
,\infty}$ and $\operatorname*{im}(h)\subseteq P_{\infty,0}$. Thus, we obtain a
new diagram%
\[%
\xymatrix{
& P_{\infty,0} \oplus M \oplus I_{\infty} \ar@{^{(}->}[d] \\
Y \ar@{..>}[ur] \ar[dr]_{0} \ar[r] & P_{\infty} \oplus M \oplus I_{\infty}
\ar@{->>}[d] \\
& P_{\infty,\infty}.
}%
\]
and by the universal property of kernels, we learn that $Y\rightarrow X$
factors over $Y^{\prime}:=P_{\infty,0}\oplus M\oplus I_{\infty}$, which lies
in $\mathsf{PLCA}_{\mathfrak{A},cg}$ since all summands do. This gives the
required factorization to see that $\mathsf{PLCA}_{\mathfrak{A},cg}%
\hookrightarrow\mathsf{PLCA}_{\mathfrak{A}}$ is left filtering.\newline%
\textit{(Left special)} (Step 1) Suppose $X\twoheadrightarrow X^{\prime\prime
}$ is an admissible epic with $X\in\mathsf{PLCA}_{\mathfrak{A}}$ and
$X^{\prime\prime}\in\mathsf{PLCA}_{\mathfrak{A},cg}$. Being an epic, there
exists an exact sequence%
\begin{equation}
X^{\prime}\hookrightarrow X\twoheadrightarrow X^{\prime\prime} \label{laa6}%
\end{equation}
in $\mathsf{PLCA}_{\mathfrak{A}}$. Pick PI-presentations for $X^{\prime}$ and
$X^{\prime\prime}$, where we denote the objects accordingly with a single
prime or double prime superscript. For $P^{\prime\prime}$ we may assume
$P^{\prime\prime}\in P_{f}(\mathfrak{A})$ since $X^{\prime\prime}%
\in\mathsf{PLCA}_{\mathfrak{A},cg}$. By Lemma
\ref{lemma_ExactSeqTo3By3Diagram} we may extend Equation \ref{laa6} to the
diagram%
\[%
\xymatrix{
P^{\prime} \ar@{^{(}->}[d] \ar@{^{(}->}[r] & P^{\prime} \oplus P^{\prime
\prime} \ar@{^{(}->}[d] \ar@{->>}[r] &
P^{\prime\prime} \ar@{^{(}->}[d] \\
X^{\prime} \ar@{->>}[d] \ar@{^{(}->}[r] & X \ar@{->>}[d] \ar@{->>}[r] &
X^{\prime\prime} \ar@{->>}[d] \\
I^{\prime} \ar@{^{(}->}[r] & I^{\prime} \oplus I^{\prime\prime} \ar@{->>}[r] &
I^{\prime\prime}.
}%
\]
Next, apply Lemma \ref{lemma_SplitOffInfiniteProjectivePart} to $X$ with
$F:=P^{\prime\prime}$. Write $X_{new}\in\mathsf{PLCA}_{\mathfrak{A},cg}$ for
its output $M$. We can now change the above diagram to%
\[%
\xymatrix{
P^{\prime} \ar@{^{(}->}[d] \ar@{^{(}->}[r] & P_{\infty} \oplus P_{0} \ar@
{^{(}->}[d]^{1 \oplus i} \ar@{->>}[r]^-{q} &
P^{\prime\prime} \ar@{^{(}->}[d] \\
X^{\prime} \ar@{->>}[d] \ar@{^{(}->}[r] & P_{\infty} \oplus X_{new} \ar@
{->>}[d] \ar@{->>}[r] &
X^{\prime\prime} \ar@{->>}[d] \\
I^{\prime} \ar@{^{(}->}[r] & 0 \oplus(I^{\prime} \oplus I^{\prime\prime}%
) \ar@{->>}[r] &
I^{\prime\prime}.
}%
\]
As $P^{\prime\prime}\subseteq P_{0}$, we have $q(P_{\infty})=0$ in
$P^{\prime\prime}$. Since $q$ is an admissible epic to the projective object
$P^{\prime\prime}$, the map $q$ splits, so we may decompose $P_{0}\simeq
\tilde{P}\oplus P^{\prime\prime}$ for some $\tilde{P}\in P_{f}(\mathfrak{A})$
and our diagram becomes%
\begin{equation}%
\xymatrix{
P^{\prime} \ar@{^{(}->}[d] \ar@{^{(}->}[r] & P_{\infty} \oplus({\tilde
{P}\oplus P^{\prime\prime}}) \ar@{^{(}->}[d]^{1 \oplus i} \ar@{->>}[r]^-{q} &
P^{\prime\prime} \ar@{^{(}->}[d] \\
X^{\prime} \ar@{->>}[d] \ar@{^{(}->}[r] & P_{\infty} \oplus X_{new} \ar@
{->>}[d] \ar@{->>}[r] &
X^{\prime\prime} \ar@{->>}[d] \\
I^{\prime} \ar@{^{(}->}[r] & 0 \oplus(I^{\prime} \oplus I^{\prime\prime}%
) \ar@{->>}[r] &
I^{\prime\prime}.
}
\label{laa8}%
\end{equation}
(Step 2) Following the arrows of the diagram, we see that both $P^{\prime}$ as
well as $X_{new}$ are closed submodules of $X$ ($=\left.  P_{\infty}\oplus
X_{new}\right.  $). Define%
\begin{equation}
J:=P^{\prime}\cap X_{new}\text{.} \label{laa8a}%
\end{equation}
We claim that this is a finitely generated discrete $\mathfrak{A}$-submodule
of $P^{\prime}$. The argument is the same as in the proof of Lemma
\ref{lemma_SplitOffInfiniteProjectivePart} (namely: write $C\oplus
V\hookrightarrow X_{new}\twoheadrightarrow D$ with $C$ compact, $V$ a vector
module, $D$ discrete. Then $C\cap P^{\prime}=0$ since $C$ is compact,
$P^{\prime}$ discrete, but $P^{\prime}$ is also torsionfree. So it suffices to
consider $V\cap P^{\prime}$, and since this is a closed subgroup, $J$ can only
be a lattice in $V$). Next, observe that the top row in Diagram \ref{laa8} is
actually split, i.e.%
\[
P^{\prime}\cong P_{\infty}\oplus\tilde{P}\text{,}%
\]
i.e. we can interpret $\tilde{P}$ as a submodule of $P^{\prime}$. Now apply
Lemma \ref{lemma_SplitOffInfiniteProjectivePart} to $X^{\prime}$ with
$F:=J+\tilde{P}$. Write $X_{new}^{\prime}\in\mathsf{PLCA}_{\mathfrak{A},cg}$
for its output $M$. Hence, we can rewrite the left downward column%
\[
P^{\prime}\hookrightarrow X^{\prime}\twoheadrightarrow I^{\prime}%
\]
as%
\[
P_{\infty}^{\prime}\oplus P_{0}^{\prime}\overset{1\oplus i^{\prime}%
}{\hookrightarrow}P_{\infty}^{\prime}\oplus X_{new}^{\prime}\twoheadrightarrow
0\oplus I^{\prime}\text{,}%
\]
where $J\subseteq P_{0}^{\prime}$ and $P_{0}^{\prime}\in P_{f}(\mathfrak{A})$.
By inspection of the proof of the lemma, we pick $P_{\infty}^{\prime}\oplus
P_{0}^{\prime}$ as direct summands and we can without loss of generality
assume $\tilde{P}$ to be a sub-summand appearing in $P_{0}^{\prime}$, say
$P_{0}^{\prime}\cong P_{00}^{\prime}\oplus\tilde{P}$. We can thus rewrite
Diagram \ref{laa8} as%
\begin{equation}%
\xymatrix{
P^{\prime}_{\infty} \oplus{P_{00}^{\prime}\oplus\tilde{P}} \ar@{^{(}%
->}[d]^{1 \oplus i^{\prime}} \ar@{^{(}->}[r]^-{b} & P_{\infty} \oplus
({\tilde{P}\oplus P^{\prime\prime}}) \ar@{^{(}->}[d]^{1 \oplus i} \ar@
{->>}[r]^-{q} &
P^{\prime\prime} \ar@{^{(}->}[d] \\
P^{\prime}_{\infty} \oplus X^{\prime}_{new} \ar@{->>}[d] \ar@{^{(}%
->}[r] & P_{\infty} \oplus X_{new} \ar@{->>}[d] \ar@{->>}[r] &
X^{\prime\prime} \ar@{->>}[d] \\
0 \oplus I^{\prime} \ar@{^{(}->}[r] & 0 \oplus(I^{\prime} \oplus
I^{\prime\prime}) \ar@{->>}[r] &
I^{\prime\prime}
}
\label{laa9}%
\end{equation}
such that $b$ is the inclusion of a direct summand and the identity on
$\tilde{P}$. It follows that $b$ makes $P_{\infty}^{\prime}$ a direct summand
of $P_{\infty}$ (so that $P_{\infty}\cong P_{\infty}^{\prime}\oplus
P_{00}^{\prime}$). It follows that we can compatibly remove the direct
summands $P_{\infty}^{\prime}$ resp. $P_{\infty}$ in Diagram \ref{laa9}. We
get%
\begin{equation}%
\xymatrix{
{P_{00}^{\prime}\oplus\tilde{P}} \ar@{^{(}->}[d]^{i^{\prime}} \ar@{^{(}%
->}[r] & {P_{00}^{\prime}\oplus\tilde{P}}
\oplus P^{\prime\prime} \ar@{^{(}->}[d]^{1 \oplus i}
\ar@{->>}[r]^-{q} &
P^{\prime\prime} \ar@{^{(}->}[d] \\
X^{\prime}_{new} \ar@{->>}[d] \ar@{^{(}->}[r] & P_{00}^{\prime} \oplus X_{new}
\ar@{->>}[d] \ar@{->>}[r] &
X^{\prime\prime} \ar@{->>}[d] \\
I^{\prime} \ar@{^{(}->}[r] & 0 \oplus(I^{\prime} \oplus I^{\prime\prime}%
) \ar@{->>}[r] &
I^{\prime\prime}.
}
\label{laa9b}%
\end{equation}
Now compare the middle row of the previous diagram with the middle row in the
previous diagrams: We have merely replaced $X^{\prime}$ (resp. $X$) by a
direct summand of itself. Thus, we get a commutative diagram%
\[%
\xymatrix{
X^{\prime}_{new} \ar@{^{(}->}[r] \ar[d] & P^{\prime}_{00} \oplus X_{new}
\ar@{->>}[r] \ar[d] & X^{\prime\prime} \ar@{=}[d] \\
X^{\prime} \ar@{^{(}->}[r] & X \ar@{->>}[r] & X^{\prime\prime},
}%
\]
where the top row comes from the middle row in Diagram \ref{laa9b} and the
downward arrows are the inclusions of the respective direct summands. All
objects in the top row lie in $\mathsf{PLCA}_{\mathfrak{A},cg}$. This shows
the left special property.
\end{proof}

\begin{lemma}
\label{lemma_Equiv}There is an exact equivalence of exact categories%
\[
P_{\oplus}(\mathfrak{A})/P_{f}(\mathfrak{A})\overset{\sim}{\longrightarrow
}\mathsf{PLCA}_{\mathfrak{A}}/\mathsf{PLCA}_{\mathfrak{A},cg}\text{,}%
\]
sending a projective module to itself, equipped with the discrete topology.
\end{lemma}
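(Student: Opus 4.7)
The plan is to verify that the induced functor is an equivalence by checking essential surjectivity and full faithfulness, both reducing to the structure theorem (Proposition \ref{prop_PLCAStructureThm}) plus careful bookkeeping in the Schlichting roof calculus. First I set up the functor: the fully exact embedding $P_{\oplus}(\mathfrak{A})\hookrightarrow \mathsf{PLCA}_{\mathfrak{A}}$ via the discrete topology sends $P_{f}(\mathfrak{A})$ into $\mathsf{PLCA}_{\mathfrak{A},cg}$, because a finitely generated discrete projective has underlying LCA group $\mathbb{Z}^{n}$, which is compactly generated. Combined with the left $s$-filtering hypotheses of Propositions \ref{prop_LF1} and \ref{prop_LF2}, the universal property of the Schlichting quotient produces the desired exact functor
\[
\Phi : P_{\oplus}(\mathfrak{A})/P_{f}(\mathfrak{A}) \longrightarrow \mathsf{PLCA}_{\mathfrak{A}}/\mathsf{PLCA}_{\mathfrak{A},cg}.
\]

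Essential surjectivity is immediate: for $X \in \mathsf{PLCA}_{\mathfrak{A}}$, Proposition \ref{prop_PLCAStructureThm} gives $X \cong P_{\infty} \oplus I_{\infty} \oplus B$, where $I_{\infty}$ is compact (Lemma \ref{lemma_ObjectsInIProdCompactConnected}) and hence compactly generated, and $B$ admits a PI-presentation by a finitely generated projective and a compact injective, so $B \in \mathsf{PLCA}_{\mathfrak{A},cg}$. Thus $X \cong P_{\infty} = \Phi(P_{\infty})$ in the quotient.

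For full faithfulness, I would use the right-fraction description of morphisms: any morphism $P \to Q$ in $\mathsf{PLCA}_{\mathfrak{A}}/\mathsf{PLCA}_{\mathfrak{A},cg}$ between $P,Q \in P_{\oplus}(\mathfrak{A})$ is represented by a roof $P \twoheadleftarrow Z \to Q$ whose left admissible epic has kernel in $\mathsf{PLCA}_{\mathfrak{A},cg}$. Applying Proposition \ref{prop_PLCAStructureThm} to $Z$ and then normalizing with Lemma \ref{lemma_SplitOffInfiniteProjectivePart}, where the finite-projective summand is chosen to absorb the intersection of the kernel with $Z$'s discrete part (discrete and finitely generated by the same vector-module/compact argument as in the proof of that lemma), I would refine the roof to one in which $Z$ is replaced by its $P_{\oplus}(\mathfrak{A})$-summand, producing a preimage under $\Phi$. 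Faithfulness proceeds by applying the same normalization procedure to the auxiliary object witnessing the equivalence of two given roofs.

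The main obstacle will be the careful bookkeeping in this final step: one must verify that after replacing $Z$ the admissible-epic property and the kernel condition are preserved and translate correctly into the kernel-in-$P_{f}(\mathfrak{A})$ requirement on the $P_{\oplus}(\mathfrak{A})$-side. The clean identity $P_{\oplus}(\mathfrak{A}) \cap \mathsf{PLCA}_{\mathfrak{A},cg} = P_{f}(\mathfrak{A})$, since a discrete projective is compactly generated precisely when finitely generated, is what makes this translation faithful.
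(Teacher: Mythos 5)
Your setup of the functor and your essential surjectivity argument are fine and essentially match the paper (the paper uses the PI-presentation $P\hookrightarrow X\twoheadrightarrow I$ directly rather than Proposition \ref{prop_PLCAStructureThm}, but both reduce to the fact that an admissible monic with compactly generated cokernel becomes an isomorphism in the quotient). The gap is in full faithfulness. The move ``replace $Z$ by its $P_{\oplus}(\mathfrak{A})$-summand'' is not a legitimate operation on roofs: if $Z\cong P_{\infty}\oplus M$ with $M$ compactly generated, then the inclusion $P_{\infty}\hookrightarrow Z$ is a weak isomorphism, but the composite $P_{\infty}\hookrightarrow Z\overset{e}{\twoheadrightarrow}P$ need not be surjective (part of $P$ may be hit only by $e(M)$), let alone an admissible epic whose kernel lies in $P_{f}(\mathfrak{A})$. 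So your ``refined roof'' is not a roof, and you cannot read off a preimage under $\Phi$ from it. You flag this as ``bookkeeping,'' but it is exactly the substance of the proof, and restricting to a summand is the wrong normalization.

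The paper's route avoids this: it decomposes $Z$ as $C\oplus V\hookrightarrow Z\twoheadrightarrow D$ with $C$ compact, $V$ a vector module, $D$ discrete, and observes that any map from $Z$ to the discrete torsion-free modules $P$, $Q$ kills $C$ (compact image in a discrete group is finite, hence torsion) and $V$ (connected image), so both legs of the roof factor through the quotient $D$. Passing to a quotient by the compactly generated subobject $C\oplus V$ \emph{is} a legitimate roof replacement, and it preserves the admissible-epic property; then the compactly generated kernel of $D\twoheadrightarrow P$ is discrete, hence finitely generated, and projectivity of $P$ splits the epic, giving $D\cong P\oplus K$ with $K\in P_{f}(\mathfrak{A})$. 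If you insist on your summand approach, you would additionally have to show that $e(M)$ is a finitely generated submodule of the discrete $P$, enlarge $P_{\infty}$ by a finitely generated projective complement $P_{0}$ so that $P_{\infty}\oplus P_{0}\to P$ is surjective with finitely generated kernel, and then verify the equivalence of the old and new roofs; none of this is in your sketch, and the same omission affects your one-line treatment of faithfulness.
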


\begin{proof}
We clearly have an exact functor $P_{\oplus}(\mathfrak{A})\rightarrow
\mathsf{PLCA}_{\mathfrak{A}}$, basically using that $P_{\oplus}$ is a full
subcategory of the latter. Since every finitely generated projective
$\mathfrak{A}$-module has underlying abelian group $\mathbb{Z}^{n}$ for some
$n$, it is compactly generated, so we get the exact functor%
\[
P_{\oplus}(\mathfrak{A})/P_{f}(\mathfrak{A})\longrightarrow\mathsf{PLCA}%
_{\mathfrak{A}}/\mathsf{PLCA}_{\mathfrak{A},cg}\text{.}%
\]
This functor is essentially surjective: Given any $X\in\mathsf{PLCA}%
_{\mathfrak{A}}$, let $P\hookrightarrow X\twoheadrightarrow I$ be a
PI-presentation. Since $I\in\mathsf{PLCA}_{\mathfrak{A},cg}$ it follows that
$P\hookrightarrow X$ is an isomorphism in the quotient exact category
(\cite[Proposition 2.19, (2)]{MR3510209}), but $P\in P_{\oplus}(\mathfrak{A}%
)$. We next show that the functor is fully faithful: Morphisms $Y_{1}%
\rightarrow Y_{2}$ in $\mathsf{PLCA}_{\mathfrak{A}}/\mathsf{PLCA}%
_{\mathfrak{A},cg}$ are roofs%
\begin{equation}
Y_{1}\overset{e}{\twoheadleftarrow}Y_{1}^{\prime}\rightarrow Y_{2}\text{,}
\label{lx1}%
\end{equation}
where $e$ is an admissible epic with compactly generated kernel $K$. For
$Y_{1},Y_{2}$ in the strict image of the functor, these objects carry the
discrete topology. Using the structure theorem of $\mathsf{LCA}_{\mathfrak{A}%
}$ for $Y_{1}^{\prime}$, \cite[Lemma 6.5]{etnclca}, we get a decomposition%
\[
C\oplus V\hookrightarrow Y_{1}^{\prime}\twoheadrightarrow D
\]
with $C$ a compact $\mathfrak{A}$-module, $V$ a vector $\mathfrak{A}$-module
and $D$ a discrete $\mathfrak{A}$-module. Since the image of a compactum in a
discrete group is compact, it must be finite, hence torsion, but $Y_{1},Y_{2}$
are projective $\mathfrak{A}$-modules, so the image of $C$ in both
$Y_{1},Y_{2}$ must be zero. Similarly, $V$ is connected and hence its image in
$Y_{1},Y_{2}$ must be zero. Thus, without loss of generality, the roof in
Equation \ref{lx1} can be assumed to have $Y_{1}^{\prime}$ discrete, as any
roof is equivalent to such a roof. However, if $Y_{1}$ is discrete, the
compactly generated kernel $K$ must be finitely generated. Thus, as $Y_{1}$ is
projective, the epic $e$ in Equation \ref{lx1} is split and such that
$Y_{1}^{\prime}\cong Y_{1}\oplus K$ with $K$ (then by necessity) a finitely
generated projective $\mathfrak{A}$-module. Thus, the roofs representing
morphisms in $\mathsf{PLCA}_{\mathfrak{A}}/\mathsf{PLCA}_{\mathfrak{A},cg}$
are precisely the same roofs as for morphisms in $P_{\oplus}(\mathfrak{A}%
)/P_{f}(\mathfrak{A})$, and up to the same equivalence relation, proving full
faithfulness. Combining all these facts, the functor in our claim is an exact equivalence.
\end{proof}

The next proposition relies on the concept of localizing invariants in the
sense of \cite{MR3070515}.

\begin{proposition}
\label{prop_BasicSeqWithCG}Let $A$ be any finite-dimensional semisimple
$\mathbb{Q}$-algebra and $\mathfrak{A}\subseteq A$ an order. Let $\mathsf{A}$
be a stable $\infty$-category. Suppose $K:\operatorname*{Cat}_{\infty
}^{\operatorname*{ex}}\rightarrow\mathsf{A}$ is a localizing invariant with
values in $\mathsf{A}$.

\begin{enumerate}
\item There is a fiber sequence%
\begin{equation}
K(\mathfrak{A})\overset{g}{\longrightarrow}K(\mathsf{PLCA}_{\mathfrak{A}%
,cg})\overset{h}{\longrightarrow}K(\mathsf{PLCA}_{\mathfrak{A}}) \label{l_B1}%
\end{equation}
in $\mathsf{A}$. Here the map $g$ is induced from the exact functor sending a
finitely generated projective right $\mathfrak{A}$-module to itself, equipped
with the discrete topology. The map $h$ is induced from the inclusion
$\mathsf{PLCA}_{\mathfrak{A},cg}\hookrightarrow\mathsf{PLCA}_{\mathfrak{A}}$.

\item There is a morphism of fiber sequences\footnote{that is: when we write
the fiber sequences as their underlying bi-Cartesian square along with a null
homotopy for the fourth vertex, then we have a morphism of bi-Cartesian
squares, in particular the null homotopies are compatible} from Sequence
\ref{l_B1} to%
\[
K(\mathsf{Mod}_{\mathfrak{A},fg})\overset{g}{\longrightarrow}K(\mathsf{LCA}%
_{\mathfrak{A},cg})\overset{h}{\longrightarrow}K(\mathsf{LCA}_{\mathfrak{A}%
})\text{,}%
\]
based on the fully exact inclusions%
\[
P_{f}(\mathfrak{A})\subseteq\mathsf{Mod}_{\mathfrak{A},fg}\qquad
\text{and}\qquad\mathsf{PLCA}_{\mathfrak{A}}\subseteq\mathsf{LCA}%
_{\mathfrak{A}}%
\]
and the compactly generated modules respectively.
\end{enumerate}
\end{proposition}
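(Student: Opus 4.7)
The plan is to derive Sequence~\ref{l_B1} from Schlichting's localization theorem for left $s$-filtering inclusions, combined with an Eilenberg swindle to kill $K(P_{\oplus}(\mathfrak{A}))$. First I would observe that every $P \in P_{\oplus}(\mathfrak{A})$ is a countable direct sum of finitely generated projectives, so $P^{(\mathbb{N})} = \bigoplus_{n \geq 1} P$ again lies in $P_{\oplus}(\mathfrak{A})$ and satisfies $P \oplus P^{(\mathbb{N})} \cong P^{(\mathbb{N})}$. Additivity of the localizing invariant $K$ then forces $K(P_{\oplus}(\mathfrak{A})) \simeq 0$ in $\mathsf{A}$.

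By Propositions~\ref{prop_LF1} and~\ref{prop_LF2}, both $P_{f}(\mathfrak{A}) \hookrightarrow P_{\oplus}(\mathfrak{A})$ and $\mathsf{PLCA}_{\mathfrak{A},cg} \hookrightarrow \mathsf{PLCA}_{\mathfrak{A}}$ are left $s$-filtering, so Schlichting's theorem (in the form of \cite[\S 2.2.2]{MR3510209}) produces two fiber sequences under $K$:
\[
K(\mathfrak{A}) \longrightarrow K(P_{\oplus}(\mathfrak{A})) \longrightarrow K(P_{\oplus}(\mathfrak{A})/P_{f}(\mathfrak{A}))
\]
and
\[
K(\mathsf{PLCA}_{\mathfrak{A},cg}) \longrightarrow K(\mathsf{PLCA}_{\mathfrak{A}}) \longrightarrow K(\mathsf{PLCA}_{\mathfrak{A}}/\mathsf{PLCA}_{\mathfrak{A},cg}).
\]
The first, combined with the vanishing above, identifies the quotient term as $\Sigma K(\mathfrak{A})$. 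Lemma~\ref{lemma_Equiv} identifies the two quotient exact categories, so substituting this equivalence into the second fiber sequence yields $K(\mathsf{PLCA}_{\mathfrak{A},cg}) \to K(\mathsf{PLCA}_{\mathfrak{A}}) \to \Sigma K(\mathfrak{A})$. Rotating gives precisely Sequence~\ref{l_B1}, and a chase of Schlichting's construction through Lemma~\ref{lemma_Equiv} identifies the connecting map $g$ with the functor sending a finitely generated projective to itself equipped with the discrete topology, as required.

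For the morphism of fiber sequences in (2), the same recipe applies inside $\mathsf{LCA}_{\mathfrak{A}}$: the left $s$-filtering inclusion $\mathsf{LCA}_{\mathfrak{A},cg} \hookrightarrow \mathsf{LCA}_{\mathfrak{A}}$ and an LCA-analogue of Lemma~\ref{lemma_Equiv} (both available from \cite{etnclca}), together with an Eilenberg swindle on the corresponding quotient module category, yield the bottom fiber sequence. The fully exact inclusions $P_{f}(\mathfrak{A}) \subseteq \mathsf{Mod}_{\mathfrak{A},fg}$, $\mathsf{PLCA}_{\mathfrak{A},cg} \subseteq \mathsf{LCA}_{\mathfrak{A},cg}$, and $\mathsf{PLCA}_{\mathfrak{A}} \subseteq \mathsf{LCA}_{\mathfrak{A}}$ all respect the $s$-filtering data (an admissible epic with compactly generated kernel restricts to such, and filtering factorizations can be arranged inside the smaller category by the structure results of Section~3), so the functoriality of Schlichting's construction promotes the three pointwise comparison maps to a morphism of bi-Cartesian squares, and in particular matches the null-homotopies at the fourth vertex. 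The main obstacle I expect is precisely this last step: verifying that the Eilenberg-swindle identification of the quotient terms on the $\mathsf{PLCA}$-side is compatible with the one on the $\mathsf{LCA}$-side under the inclusion, so that the vertical map of quotients really is induced by a map of the underlying swindle data. Once that compatibility is pinned down, the functoriality of Schlichting's machine supplies the morphism of fiber sequences with no further input.
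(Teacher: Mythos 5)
Your proposal is correct and follows essentially the same route as the paper: both rely on the left $s$-filtering inclusions of Propositions \ref{prop_LF1} and \ref{prop_LF2} to produce two localization fiber sequences, the Eilenberg swindle to kill $K(P_{\oplus}(\mathfrak{A}))$, and Lemma \ref{lemma_Equiv} to identify the quotient terms, with the discrete-topology functor supplying the comparison maps. The paper packages this as a commutative ladder of fiber sequences (and treats part (2) as automatic from compatibility with the argument of the earlier paper), but the underlying argument is the one you describe.
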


\begin{proof}
The proof is a mild variation of \cite[Proposition 11.1]{etnclca}, but using
the fully exact subcategory $\mathsf{PLCA}_{\mathfrak{A}}$ instead of
$\mathsf{LCA}_{\mathfrak{A}}$. However, especially since the proofs are
compatible otherwise, the second claim is automatically true. For the first
claim, we set up the diagram
\begin{equation}%
\xymatrix{
K(P_{f}(\mathfrak{A})) \ar[r] \ar[d]_{g} & K(P_{\oplus}(\mathfrak{A}%
)) \ar[r] \ar[d] & K(P_{\oplus}(\mathfrak{A})/P_{f}(\mathfrak{A}%
)) \ar[d]^{\Phi} \\
K(\mathsf{PLCA}_{\mathfrak{A},cg}) \ar[r] & K(\mathsf{PLCA}_{\mathfrak{A}%
}) \ar[r] & K({\mathsf{PLCA}_{\mathfrak{A}}}/{\mathsf{PLCA}_{\mathfrak{A},cg}%
})
}
\label{l_B3}%
\end{equation}
as follows: By Proposition \ref{prop_LF1} and \ref{prop_LF2} we get fiber
sequences in $K$, forming the rows. The equivalence $\Phi$ stems from the
equivalence of the underlying exact categories, coming from Lemma
\ref{lemma_Equiv}. The downward arrows come from the exact functors sending
the respective $\mathfrak{A}$-modules to themselves, equipped with the
discrete topology. As $P_{\oplus}(\mathfrak{A})$ is closed under countable
direct sums, $K(P_{\oplus}(\mathfrak{A}))=0$ by the\ Eilenberg swindle.
\end{proof}

\section{Gorenstein orders}

\subsection{Definitions}

For any order $\mathfrak{A}\subset A$ define%
\begin{equation}
\mathfrak{A}^{\ast}:=\operatorname*{Hom}\nolimits_{\mathbb{Z}}(\mathfrak{A}%
,\mathbb{Z)}\text{.} \label{l_LinZDual}%
\end{equation}
The left $\mathfrak{A}$-module structure on this is given by%
\begin{equation}
(\alpha\cdot\varphi)(q):=\varphi(q\alpha) \label{l_LinZDual2}%
\end{equation}
(and correspondingly for the right module structure, for which we however have
no need).

\begin{example}
A general order is far from being reflexive, i.e. $\mathfrak{A}^{\ast\ast}$ is
usually strictly bigger than $\mathfrak{A}$ under the natural inclusion
$\mathfrak{A}\rightarrow\mathfrak{A}^{\ast\ast}$ (view both as submodules of
$A\overset{\sim}{\rightarrow}A^{\ast\ast}$). If $\mathfrak{A}$ is a maximal
order, the inclusion is the identity $\mathfrak{A}\overset{=}{\rightarrow
}\mathfrak{A}^{\ast\ast}$, and in our situation over the ring $\mathbb{Z}$
this is an equivalent characterization of maximality by Auslander--Goldman
\cite[(11.4) Theorem]{MR1972204}.
\end{example}

\begin{definition}
\label{def_Gorenstein}An order $\mathfrak{A}\subset A$ is called a
\emph{Gorenstein order} if one (then all) of the following properties hold:

\begin{enumerate}
\item $A/\mathfrak{A}$ is an injective left $\mathfrak{A}$-module,

\item $\operatorname*{left}$-$\operatorname*{injdim}_{\mathfrak{A}%
}(\mathfrak{A})=1$,

\item $\mathfrak{A}^{\ast}$ is a categorically compact projective
generator\footnote{sometimes this is also called a \textit{progenerator}. In
the situation at hand being categorically compact is equivalent to being a
finitely presented $\mathfrak{A}$-module.} for the category of left
$\mathfrak{A}$-modules,

\item or any of (1), (2), (3) as a right module.
\end{enumerate}
\end{definition}

The concept was introduced in \cite{MR0219527}. Most of the equivalence of
these conditions is proven in \cite[Proposition 6.1]{MR0219527}, \cite[Chapter
IX, \S 4, \S 5]{MR0283014}, while the characterization (1) is due to
Roggenkamp \cite[Lemma 5]{MR0314904}.

Non-commutative Gorenstein rings are rings with finite left and right
injective dimension, so Gorenstein orders are in particular Gorenstein rings.

We collect a few well-known facts, only in order to exhibit the usefulness of
the concept.

\begin{lemma}
\label{lemma_IntegralGroupRingsAreGorensteinOrders}For any finite group $G$,
$\mathbb{Z}[G]\subset\mathbb{Q}[G]$ is a Gorenstein order.
\end{lemma}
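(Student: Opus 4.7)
The cleanest route is to verify criterion (3) of Definition \ref{def_Gorenstein} by exhibiting an explicit isomorphism $\mathbb{Z}[G]^{\ast}\cong\mathbb{Z}[G]$ of left $\mathbb{Z}[G]$-modules. Once this is established, $\mathbb{Z}[G]^{\ast}$ is automatically free cyclic as a left module, hence categorically compact (it is even finitely presented, as $\mathbb{Z}[G]$ is a finitely generated $\mathbb{Z}$-algebra), projective, and a generator, since these properties all hold tautologically for $\mathbb{Z}[G]$ viewed as a module over itself.

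To construct the isomorphism, I would pick the $\mathbb{Z}$-basis $\{g\}_{g\in G}$ of $\mathbb{Z}[G]$ and the dual $\mathbb{Z}$-basis $\{g^{\ast}\}_{g\in G}$ of $\mathbb{Z}[G]^{\ast}$ defined by $g^{\ast}(h)=\delta_{g,h}$. Using the left action of Equation \ref{l_LinZDual2}, a one-line computation gives
\[
(g\cdot h^{\ast})(k)\;=\;h^{\ast}(kg)\;=\;\delta_{kg,h}\;=\;\delta_{k,hg^{-1}}\text{,}\qquad\text{i.e.}\qquad g\cdot h^{\ast}=(hg^{-1})^{\ast}\text{.}
\]
Specialising $h$ to the neutral element $e$ gives $g\cdot e^{\ast}=(g^{-1})^{\ast}$, so the $\mathbb{Z}[G]$-orbit of $e^{\ast}$ is exactly the basis $\{g^{\ast}\}_{g\in G}$ (just reindexed by $g\mapsto g^{-1}$). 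Consequently the left $\mathbb{Z}[G]$-linear map
\[
\Phi:\mathbb{Z}[G]\longrightarrow\mathbb{Z}[G]^{\ast}\text{,}\qquad x\longmapsto x\cdot e^{\ast}
\]
carries the $\mathbb{Z}$-basis $\{g\}$ bijectively onto $\{(g^{-1})^{\ast}\}$ and is therefore an isomorphism of left $\mathbb{Z}[G]$-modules. This verifies (3) and finishes the proof.

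I do not expect any serious obstacle. The only point that requires attention is being consistent about the left action on $\mathfrak{A}^{\ast}$ as defined in Equation \ref{l_LinZDual2} (note that it uses right multiplication inside $\varphi$, which is why the group inversion $g\mapsto g^{-1}$ appears in the final identification). As a sanity check, one can note that the same argument works with $\mathbb{Z}$ replaced by any commutative ring and with $G$ any finite group; this is just the classical fact that the group ring of a finite group is a \emph{symmetric} Frobenius algebra over its ground ring, which is a stronger property than being Gorenstein.
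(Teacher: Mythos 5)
Your proof is correct and is in substance the same as the paper's: the paper likewise verifies criterion (3) of Definition \ref{def_Gorenstein} by showing $\mathbb{Z}[G]^{\ast}\cong\mathbb{Z}[G]$ as a module, only phrased via the regular trace form ($\operatorname{tr}(g)=|G|\,\delta_{g,e}$, hence $\mathfrak{A}^{\ast}=\tfrac{1}{|G|}\mathfrak{A}$ inside $\mathbb{Q}[G]$). That is your dual-basis computation in disguise, since $g^{\ast}=\tfrac{1}{|G|}\operatorname{tr}(g^{-1}\cdot\,)$, so no further comparison is needed.
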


\begin{proof}
(\cite[Corollary 6]{MR0314904}) For any $g\in G\setminus\{e\}$ the action of
$g$ is a fixed-point free permutation of the $\mathbb{Z}$-module generators
$G$, so $\operatorname*{tr}(g)=0$, while for $g=e$ we have $\operatorname*{tr}%
(e)=\left\vert G\right\vert $. It follows that $\mathfrak{A}^{\ast}=\frac
{1}{\left\vert G\right\vert }\mathfrak{A}$ inside $\mathbb{Q}[G]$.
\end{proof}

\begin{remark}
If we want to work with group rings $\mathbb{Z}[G]\subset\mathbb{Q}[G]$ we are
basically forced to work at least in the generality of Gorenstein orders. The
slightly more specialized class of Bass orders is in general not sufficient,
\cite{MR1040944}. A group ring $\mathbb{Z}[G]$ has finite global dimension if
and only if $G=1$, so the even more specialized classes of regular or
hereditary (let alone maximal) orders are hopeless.
\end{remark}

\begin{lemma}
Any hereditary order is Gorenstein.
\end{lemma}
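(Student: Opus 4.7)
The plan is to verify condition (2) of Definition \ref{def_Gorenstein}, i.e.\ to show that $\operatorname{left-injdim}_{\mathfrak{A}}(\mathfrak{A}) = 1$. Everything else will be packaged by the equivalence of (1)--(4) already recorded after the definition.

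For the upper bound I would invoke the standard fact that for any ring $R$ and any left $R$-module $M$ one has $\operatorname{injdim}_R(M) \leq \operatorname{left-gldim}(R)$, since $\operatorname{Ext}^{n+1}_R(-,M)$ vanishes identically whenever $n \geq \operatorname{left-gldim}(R)$. Being hereditary, $\mathfrak{A}$ has left global dimension at most $1$, so $\operatorname{left-injdim}_{\mathfrak{A}}(\mathfrak{A}) \leq 1$.

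For the lower bound, I need to rule out $\mathfrak{A}$ being injective as a left module over itself. Consider the short exact sequence of left $\mathfrak{A}$-modules
\[
0 \to \mathfrak{A} \to A \to A/\mathfrak{A} \to 0.
\]
Since $A = \mathfrak{A} \otimes_{\mathbb{Z}} \mathbb{Q}$, every $a \in A$ satisfies $na \in \mathfrak{A}$ for some positive integer $n$, so $A/\mathfrak{A}$ is $\mathbb{Z}$-torsion, whereas $A$ is $\mathbb{Z}$-torsion-free. Were $\mathfrak{A}$ injective as a left $\mathfrak{A}$-module, this sequence would split (even just as an extension of abelian groups), forcing $A/\mathfrak{A}=0$, i.e.\ $\mathfrak{A}=A$. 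That is impossible, since $\mathfrak{A}$ is finitely generated over $\mathbb{Z}$ while any non-zero finite-dimensional $\mathbb{Q}$-algebra is not.

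There is essentially no obstacle here: the argument consists of the $\operatorname{injdim} \leq \operatorname{gldim}$ bound together with the trivial observation that a non-zero $\mathbb{Z}$-order is never self-injective. The only small subtlety is the side-convention; I would formulate the argument on the left to match condition (2) directly, and note that the Gorenstein property is symmetric in the sense of (4) so the side chosen does not matter. Alternatively one could prove condition (1) instead: combine $\operatorname{Ext}^{\geq 2}_{\mathfrak{A}}(-,\mathfrak{A}) = 0$ (from hereditariness) with injectivity of $A$ as a left $\mathfrak{A}$-module (restriction along the flat map $\mathfrak{A}\hookrightarrow A$ sends injective $A$-modules to injective $\mathfrak{A}$-modules, and $A$ is semisimple), and conclude via the long exact sequence of $\operatorname{Ext}(N,-)$ that $A/\mathfrak{A}$ is injective; but this is strictly longer than the argument via (2).
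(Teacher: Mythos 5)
Your argument is correct, but it takes a genuinely different route from the paper. You verify condition (2) of Definition \ref{def_Gorenstein} directly: the upper bound $\operatorname{injdim}_{\mathfrak{A}}(\mathfrak{A})\le\operatorname{gldim}(\mathfrak{A})\le 1$ from hereditariness, together with the observation that a non-zero order is never self-injective (the extension $\mathfrak{A}\hookrightarrow A\twoheadrightarrow A/\mathfrak{A}$ would split, which is impossible since $A/\mathfrak{A}$ is $\mathbb{Z}$-torsion and $A$ is torsion-free), which pins the injective dimension at exactly $1$ as the literal statement of (2) demands. The paper instead verifies condition (1) in two lines: $A$ is an injective $\mathfrak{A}$-module because $A$ is semisimple, and over a hereditary ring quotients of injectives are injective, so $A/\mathfrak{A}$ is injective; the left/right issue is dismissed by noting that an order is left hereditary iff right hereditary, whereas you appeal to the symmetry built into condition (4). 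Your closing alternative sketch is essentially the paper's proof. Both arguments rest on the equivalence of (1)--(4), which the paper cites rather than proves; yours is slightly longer but has the mild advantage of explicitly establishing the strict lower bound $\operatorname{injdim}\ge 1$, a point the paper's route never needs to address.
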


\begin{proof}
Consider $\mathfrak{A}\hookrightarrow A\twoheadrightarrow A/\mathfrak{A}$. As
$A$ is semisimple, $A$ is an injective $\mathfrak{A}$-module, but since
$\mathfrak{A}$ is hereditary, quotients of injectives are injective, so
$A/\mathfrak{A}$ is injective. An order is left hereditary if and only if it
is right hereditary, so there is no question about left or right here.
\end{proof}

\begin{lemma}
[{\cite[Prop. 3.6]{MR3279374}}]If $A$ is a number field, then any order of the
shape $\mathbb{Z}[\alpha]$ with $\alpha\in A$ is Gorenstein.
\end{lemma}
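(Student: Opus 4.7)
The plan is to verify criterion (3) of Definition \ref{def_Gorenstein}, namely that $\mathfrak{A}^{\ast}=\operatorname{Hom}_{\mathbb{Z}}(\mathfrak{A},\mathbb{Z})$ is a categorically compact projective generator for left $\mathfrak{A}$-modules. Since $\mathfrak{A}=\mathbb{Z}[\alpha]$ is commutative, it will suffice to exhibit an isomorphism $\mathfrak{A}^{\ast}\cong\mathfrak{A}$ of $\mathfrak{A}$-modules, because then all three properties are transferred from $\mathfrak{A}$ to $\mathfrak{A}^{\ast}$ tautologically. This is the direct analogue of the argument used in Lemma \ref{lemma_IntegralGroupRingsAreGorensteinOrders}, where the same $\mathfrak{A}$-linear isomorphism was established for integral group rings via the explicit computation $\mathfrak{A}^{\ast}=\frac{1}{|G|}\mathfrak{A}$.

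First, I would note that $\alpha$ is necessarily an algebraic integer: the hypothesis that $\mathbb{Z}[\alpha]$ is an order forces it to be a finitely generated $\mathbb{Z}$-module, which is equivalent to $\alpha$ satisfying a monic polynomial relation over $\mathbb{Z}$. Let $f\in\mathbb{Z}[x]$ be its monic minimal polynomial, of degree $n=[A:\mathbb{Q}]$. Next, the trace pairing $A\times A\to\mathbb{Q}$, $(x,y)\mapsto\operatorname{tr}_{A/\mathbb{Q}}(xy)$, is non-degenerate because $A/\mathbb{Q}$ is separable. This pairing yields an $A$-linear isomorphism from $A$ to its $\mathbb{Q}$-linear dual, under which $\mathfrak{A}^{\ast}$ (carrying the left $\mathfrak{A}$-action of Equation \ref{l_LinZDual2}) is identified with the fractional $\mathfrak{A}$-ideal $\mathfrak{d}^{-1}:=\{x\in A\mid\operatorname{tr}(x\mathfrak{A})\subseteq\mathbb{Z}\}$ sitting inside $A$, i.e.\ with the inverse different.

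The crucial final input is then the classical Euler formula $\mathfrak{d}^{-1}=\frac{1}{f'(\alpha)}\mathfrak{A}$ for monogenic orders (see e.g.\ Serre, \emph{Local Fields}, Ch.~III, \S 6). Since $A/\mathbb{Q}$ is separable we have $f'(\alpha)\neq 0$, so multiplication by $f'(\alpha)$ is an $\mathfrak{A}$-linear isomorphism $\mathfrak{A}\overset{\sim}{\to}\mathfrak{d}^{-1}$. Combined with the previous step, this gives $\mathfrak{A}^{\ast}\cong\mathfrak{A}$, and criterion (3) of Definition \ref{def_Gorenstein} is satisfied.

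The only real obstacle is invoking (or reproving) Euler's formula; the classical proof expands $1/f(x)$ in partial fractions over a splitting field and reads off the dual basis of $\{1,\alpha,\ldots,\alpha^{n-1}\}$ under the trace form, which is a direct calculation. An attractive alternative, which sidesteps the formula entirely, is to observe that $\mathbb{Z}[\alpha]\cong\mathbb{Z}[x]/(f)$ is a complete intersection in the two-dimensional regular ring $\mathbb{Z}[x]$, hence Gorenstein as a commutative Noetherian ring; since $\mathfrak{A}$ has Krull dimension $1$ this forces $\operatorname{injdim}_{\mathfrak{A}}(\mathfrak{A})=1$, matching criterion (2) of Definition \ref{def_Gorenstein} directly. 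Either route closes the proof with no further input needed.
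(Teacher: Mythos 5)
Your argument is correct and complete. The paper itself gives no proof of this lemma --- it is quoted verbatim from \cite{MR3279374} --- so there is nothing internal to compare against, but your route is exactly the natural one and mirrors the paper's own treatment of the group-ring case in Lemma \ref{lemma_IntegralGroupRingsAreGorensteinOrders}: identify $\mathfrak{A}^{\ast}$ with the inverse different via the trace pairing (the paper carries out precisely this identification, in the non-commutative setting, in Steps 1--2 of the proof of Proposition \ref{prop_StdSeq}), and then observe that for a monogenic order the inverse different is the principal fractional ideal $f'(\alpha)^{-1}\mathfrak{A}$, hence free of rank one, so criterion (3) of Definition \ref{def_Gorenstein} holds. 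All the individual steps check out: $\alpha$ is integral and primitive because $\mathbb{Z}[\alpha]$ is a full $\mathbb{Z}$-lattice in $A$; the trace form is non-degenerate since $A/\mathbb{Q}$ is separable; and Euler's formula for the dual basis of $1,\alpha,\dots,\alpha^{n-1}$ is the classical input you cite. Your alternative route (complete intersection $\Rightarrow$ Gorenstein in the commutative sense, plus Krull dimension one $\Rightarrow$ injective dimension one, matching criterion (2)) is also valid and arguably slicker, at the cost of invoking the commutative Gorenstein machinery rather than the elementary different computation; it also makes transparent why the statement is special to monogenic orders, since a general order in $A$ need not be a complete intersection.
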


The paper \cite{MR3279374} also provides some examples of non-Gorenstein orders.

\subsection{Computations}

Recall that $A_{\mathbb{R}}:=\mathbb{R}\otimes_{\mathbb{Q}}A$ denotes the base
change to the reals.

\begin{proposition}
\label{prop_StdSeq}Suppose $A$ is a finite-dimensional semisimple $\mathbb{Q}%
$-algebra. If $\mathfrak{A}\subset A$ is a Gorenstein order, then%
\begin{equation}
\mathfrak{A}\hookrightarrow A_{\mathbb{R}}\twoheadrightarrow A_{\mathbb{R}%
}/\mathfrak{A} \label{lmfs3}%
\end{equation}
is a PI-presentation for $A_{\mathbb{R}}$. In particular, $A_{\mathbb{R}}%
\in\mathsf{PLCA}_{\mathfrak{A}}$.
\end{proposition}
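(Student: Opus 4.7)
The plan is to verify the three conditions defining a PI-presentation in $\mathsf{PLCA}_{\mathfrak{A}}$: (i) the sequence is exact in $\mathsf{LCA}_{\mathfrak{A}}$, (ii) $\mathfrak{A}\in P_{\oplus}(\mathfrak{A})$, and (iii) $A_{\mathbb{R}}/\mathfrak{A}\in I_{\Pi}(\mathfrak{A})$. Conditions (i) and (ii) are essentially formal; the real content is in (iii), and this is exactly where the Gorenstein hypothesis enters.

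For (i), since $\mathfrak{A}$ is an order, its underlying $\mathbb{Z}$-module is free of rank $n:=\dim_{\mathbb{Q}}A$, and under $\mathfrak{A}\hookrightarrow A_{\mathbb{R}}$ it embeds as a lattice in $A_{\mathbb{R}}\cong\mathbb{R}^{n}$ (as LCA groups). Hence $\mathfrak{A}$ is a closed discrete $\mathfrak{A}$-submodule, and the quotient $A_{\mathbb{R}}/\mathfrak{A}\cong\mathbb{T}^{n}$ is compact; in particular the inclusion is an admissible monic and the projection an admissible epic in $\mathsf{LCA}_{\mathfrak{A}}$. For (ii), $\mathfrak{A}$ viewed as a right module over itself is free of rank $1$, so $\mathfrak{A}\in P_{f}(\mathfrak{A})\subseteq P_{\oplus}(\mathfrak{A})$.

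For (iii), I would identify the Pontryagin dual of $A_{\mathbb{R}}/\mathfrak{A}$ with $\mathfrak{A}^{\ast}$ as left $\mathfrak{A}$-modules. Define
\[
\Theta:\mathfrak{A}^{\ast}\longrightarrow (A_{\mathbb{R}}/\mathfrak{A})^{\vee},\qquad \varphi\longmapsto\bigl(a\bmod\mathfrak{A}\mapsto\varphi_{\mathbb{R}}(a)\bmod\mathbb{Z}\bigr),
\]
where $\varphi_{\mathbb{R}}:A_{\mathbb{R}}\to\mathbb{R}$ is the $\mathbb{R}$-linear extension of $\varphi:\mathfrak{A}\to\mathbb{Z}$. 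This is well-defined because $\varphi_{\mathbb{R}}(\mathfrak{A})=\varphi(\mathfrak{A})\subseteq\mathbb{Z}$. Comparing Equation \ref{lmf1} with Equation \ref{l_LinZDual2} shows $\Theta$ is a left $\mathfrak{A}$-module map. Injectivity is clear (if $\varphi_{\mathbb{R}}(A_{\mathbb{R}})\subseteq\mathbb{Z}$ then $\varphi_{\mathbb{R}}=0$ by continuity). For surjectivity, use that $A_{\mathbb{R}}\cong\mathbb{R}^{n}$ is simply connected: any continuous character $\psi:A_{\mathbb{R}}\to\mathbb{T}$ lifts uniquely to an $\mathbb{R}$-linear map $\hat{\psi}:A_{\mathbb{R}}\to\mathbb{R}$, and vanishing of $\psi$ on $\mathfrak{A}$ forces $\hat{\psi}(\mathfrak{A})\subseteq\mathbb{Z}$, whence $\hat{\psi}|_{\mathfrak{A}}\in\mathfrak{A}^{\ast}$ is a preimage. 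Thus $(A_{\mathbb{R}}/\mathfrak{A})^{\vee}\cong\mathfrak{A}^{\ast}$ as left $\mathfrak{A}$-modules.

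By the Gorenstein assumption, clause (3) of Definition \ref{def_Gorenstein} says $\mathfrak{A}^{\ast}$ is a categorically compact projective left $\mathfrak{A}$-module, that is, a finitely generated projective right $\mathfrak{A}^{op}$-module. Hence $\mathfrak{A}^{\ast}\in P_{f}(\mathfrak{A}^{op})\subseteq P_{\oplus}(\mathfrak{A}^{op})$, and by definition of $I_{\Pi}(\mathfrak{A})$ as the Pontryagin dual of $P_{\oplus}(\mathfrak{A}^{op})$ we conclude $A_{\mathbb{R}}/\mathfrak{A}\in I_{\Pi}(\mathfrak{A})$. Combining this with (i) and (ii), the sequence \ref{lmfs3} is a PI-presentation, so $A_{\mathbb{R}}\in\mathsf{PLCA}_{\mathfrak{A}}$. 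The main obstacle is the natural isomorphism $(A_{\mathbb{R}}/\mathfrak{A})^{\vee}\cong\mathfrak{A}^{\ast}$ with its $\mathfrak{A}$-action; everything else is bookkeeping, and the Gorenstein hypothesis is used in exactly one place, namely to ensure that $\mathfrak{A}^{\ast}$ is finitely generated projective.
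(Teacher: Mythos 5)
Your proposal is correct, and its overall skeleton matches the paper's: both reduce everything to the single substantive claim that $(A_{\mathbb{R}}/\mathfrak{A})^{\vee}\cong\mathfrak{A}^{\ast}$ as left $\mathfrak{A}$-modules, and then both invoke characterization (3) of Definition \ref{def_Gorenstein} to conclude that this dual is finitely generated projective, hence that $A_{\mathbb{R}}/\mathfrak{A}\in I_{\Pi}(\mathfrak{A})$. Where you genuinely diverge is in how that isomorphism is produced. The paper routes it through the trace form of the separable algebra $A$: it introduces the inverse different $\widetilde{\mathfrak{A}}=\{p\in A_{\mathbb{R}}\mid\operatorname{tr}(pq)\in\mathbb{Z}\text{ for all }q\in\mathfrak{A}\}$ and exhibits two isomorphisms $\widetilde{\mathfrak{A}}\cong(A_{\mathbb{R}}/\mathfrak{A})^{\vee}$ (via $p\mapsto e^{2\pi i\operatorname{tr}(p\,\cdot)}$, i.e.\ the orthogonal-complement description of the dual of a quotient) and $\widetilde{\mathfrak{A}}\cong\mathfrak{A}^{\ast}$ (via non-degeneracy of the trace pairing). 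You instead build the isomorphism $\Theta:\mathfrak{A}^{\ast}\to(A_{\mathbb{R}}/\mathfrak{A})^{\vee}$ directly, by $\mathbb{R}$-linearly extending an integral functional and reducing mod $\mathbb{Z}$, with surjectivity coming from the unique lifting of characters of $\mathbb{R}^{n}$ along $\mathbb{R}\to\mathbb{T}$. Your version is more elementary and more self-contained: it uses only that $\mathfrak{A}$ is a full $\mathbb{Z}$-lattice in $A_{\mathbb{R}}$ plus basic duality for $\mathbb{R}^{n}$, and does not need the non-degeneracy of the trace form (hence, for this step, not even separability of $A$). What the paper's detour buys is a concrete realization of the dual as the submodule $\widetilde{\mathfrak{A}}\subset A_{\mathbb{R}}$, i.e.\ the classical inverse different, which is how one actually computes $\mathfrak{A}^{\ast}$ in examples (cf.\ the proof of Lemma \ref{lemma_IntegralGroupRingsAreGorensteinOrders}, where $\mathfrak{A}^{\ast}=\frac{1}{|G|}\mathfrak{A}$). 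Both arguments are complete; your explicit verification of (i) and (ii), which the paper leaves implicit, is harmless bookkeeping.
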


\begin{proof}
It is clear that $\mathfrak{A}$ is a projective right $\mathfrak{A}$-module,
so we only need to show that $(A_{\mathbb{R}}/\mathfrak{A})^{\vee}$ is a
projective left $\mathfrak{A}$-module. (Step 1) First of all, we recall that
there is a non-degenerate symmetric trace pairing%
\[
\operatorname*{tr}:A\times A\longrightarrow\mathbb{Q}%
\]
on any finite-dimensional separable $\mathbb{Q}$-algebra, \cite[(9.26)
Theorem]{MR1972204}. Now define%
\begin{equation}
\widetilde{\mathfrak{A}}:=\{p\in A_{\mathbb{R}}\mid\operatorname*{tr}%
(pq)\in\mathbb{Z}\text{ for all }q\in\mathfrak{A}\}\text{.} \label{lmmf1}%
\end{equation}
This is a subset of $A_{\mathbb{R}}$ (it corresponds to the \textit{inverse
different}, \cite[p. 150]{MR1972204}). We give it the natural left
$\mathfrak{A}$-module structure induced from $A_{\mathbb{R}}$. We claim that
there is an isomorphism of left $\mathfrak{A}$-modules%
\begin{align*}
h:\widetilde{\mathfrak{A}}  &  \longrightarrow(A_{\mathbb{R}}/\mathfrak{A}%
)^{\vee}\\
p  &  \longmapsto\left(  q\mapsto e^{2\pi i\operatorname*{tr}(pq)}\right)
\text{,}%
\end{align*}
where the term on the right refers to the corresponding character on
$A_{\mathbb{R}}/\mathfrak{A}$. For the left scalar action we compute%
\[
h(\alpha p)=\left(  q\mapsto e^{2\pi i\operatorname*{tr}(\alpha pq)}\right)
=\left(  q\mapsto e^{2\pi i\operatorname*{tr}(pq\alpha)}\right)
\]
by using that $\operatorname*{tr}(xy)=\operatorname*{tr}(yx)$ for all $x,y$
(the symmetry of the trace pairing). However, the left scalar action on
characters amounts to pre-composing with the right scalar action in the
argument, see Equation \ref{lmf1}, so the character on the right agrees with
$\alpha\cdot h(p)$ as required. Next, $h$ is an isomorphism because really
$\widetilde{\mathfrak{A}}$ is just the orthogonal complement under the
Pontryagin duality pairing,%
\[
\widetilde{\mathfrak{A}}=\{p\in A_{\mathbb{R}}\mid e^{2\pi i\operatorname*{tr}%
(pq)}=1\text{ for all }q\in\mathfrak{A}\}=\mathfrak{A}^{\perp}\text{,}%
\]
so that $h$ being an isomorphism of groups is just the standard fact
$\mathfrak{A}^{\perp}\cong(A_{\mathbb{R}}/\mathfrak{A})^{\vee}$ \cite[(4.39)
Theorem]{MR3444405}. (Step 2) Next, we claim that there is an isomorphism of
left $\mathfrak{A}$-modules%
\begin{align*}
g:\widetilde{\mathfrak{A}}  &  \longrightarrow\mathfrak{A}^{\ast}\\
p  &  \longmapsto\left(  q\mapsto\operatorname*{tr}(pq)\right)
\end{align*}
(with $\mathfrak{A}^{\ast}$ as in Equation \ref{l_LinZDual}). Firstly, for the
left scalar action we find%
\[
g(\alpha p)=\left(  q\mapsto\operatorname*{tr}(\alpha pq)\right)  =\left(
q\mapsto\operatorname*{tr}(pq\alpha)\right)
\]
using the same argument as before and this is in line with the natural left
action as we had recalled in Equation \ref{l_LinZDual2}. The map $g$ is
injective. If not, we find a $p\neq0$ such that $q\mapsto\operatorname*{tr}%
(pq)$ is the zero pairing, contradicting the non-degeneracy of the trace
pairing. Surjective: Given any functional $\varphi\in\operatorname*{Hom}%
\nolimits_{\mathbb{Z}}(\mathfrak{A},\mathbb{Z)}$, by the non-degeneracy of the
trace pairing, we find some $p\in A_{\mathbb{Q}}$ such that $\varphi
(q)=\operatorname*{tr}(pq)$. Since we know that for all $q\in\mathfrak{A}$ we
have $\varphi(q)\in\mathbb{Z}$, we literally get that $p$ meets the condition
to lie in $\widetilde{\mathfrak{A}}$. (Step 3) Combining $h$ and $g$, we
obtain an isomorphism of left $\mathfrak{A}$-modules,%
\[
(A_{\mathbb{R}}/\mathfrak{A})^{\vee}\cong\mathfrak{A}^{\ast}\text{,}%
\]
but by Definition \ref{def_Gorenstein} one of the characterizations of
Gorenstein orders implies that $\mathfrak{A}^{\ast}$ is a projective left
module. This is what we had to show.
\end{proof}

\begin{definition}
Let $\mathsf{PLCA}_{\mathfrak{A},\mathbb{R}}$ be the full subcategory of
$\mathsf{PLCA}_{\mathfrak{A}}$ of objects which are also vector $\mathfrak{A}%
$-modules. In other words, this is the full subcategory whose objects have the
underlying LCA group $\mathbb{R}^{n}$ for some $n$.
\end{definition}

\begin{lemma}
\label{lemma_IdentifyPLCAVectorIC}If $\mathfrak{A}\subset A$ is a Gorenstein
order, there is an exact equivalence of exact categories%
\[
P_{f}(A_{\mathbb{R}})\overset{\sim}{\longrightarrow}\mathsf{PLCA}%
_{\mathfrak{A},\mathbb{R}}^{ic}\text{,}%
\]
sending a right $A_{\mathbb{R}}$-module to itself, equipped with the real
vector space topology. Moreover, the fully exact subcategory inclusion
$\mathsf{PLCA}_{\mathfrak{A}}\hookrightarrow\mathsf{LCA}_{\mathfrak{A}}$
induces the equality
\[
\mathsf{PLCA}_{\mathfrak{A},\mathbb{R}}^{ic}\overset{\sim}{\longrightarrow
}\mathsf{LCA}_{\mathfrak{A},\mathbb{R}}%
\]
with the category of all vector $\mathfrak{A}$-modules in $\mathsf{LCA}%
_{\mathfrak{A}}$.
\end{lemma}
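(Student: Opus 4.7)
The plan is to verify the two equivalences in sequence, relying on one observation used throughout: since $A$ is semisimple over the perfect field $\mathbb{Q}$, the base change $A_{\mathbb{R}}=\mathbb{R}\otimes_{\mathbb{Q}}A$ is again semisimple, so every finitely generated right $A_{\mathbb{R}}$-module is projective and idempotents in finite-dimensional $A_{\mathbb{R}}$-modules split. To construct the functor $F\colon P_{f}(A_{\mathbb{R}})\to\mathsf{PLCA}_{\mathfrak{A},\mathbb{R}}^{ic}$ I would invoke Proposition \ref{prop_StdSeq}, which places $A_{\mathbb{R}}$ in $\mathsf{PLCA}_{\mathfrak{A}}$ through the PI-presentation $\mathfrak{A}\hookrightarrow A_{\mathbb{R}}\twoheadrightarrow A_{\mathbb{R}}/\mathfrak{A}$. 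Since the underlying LCA group is a real vector space, $A_{\mathbb{R}}\in\mathsf{PLCA}_{\mathfrak{A},\mathbb{R}}$, and hence so is every $A_{\mathbb{R}}^{n}$. An arbitrary $M\in P_{f}(A_{\mathbb{R}})$ is a direct summand of some $A_{\mathbb{R}}^{n}$, and the corresponding idempotent designates an object in the idempotent completion; this defines $F$ on objects, and $F$ acts as the identity on morphisms. This is the only step in the entire argument where the Gorenstein hypothesis is actually used.

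Essential surjectivity of $F$ is the content-bearing step. Given $X\in\mathsf{PLCA}_{\mathfrak{A},\mathbb{R}}^{ic}$, I write $X$ as a direct summand of some $Y\in\mathsf{PLCA}_{\mathfrak{A},\mathbb{R}}$; then $Y$ has underlying LCA group $\mathbb{R}^{n}$, and for each $\alpha\in\mathfrak{A}$ the right multiplication $\rho_{\alpha}\colon y\mapsto y\cdot\alpha$ is a continuous $\mathbb{Z}$-linear endomorphism of $\mathbb{R}^{n}$. Unique divisibility of the target forces $\rho_{\alpha}$ to be $\mathbb{Q}$-linear, and continuity combined with density of $\mathbb{Q}$ in $\mathbb{R}$ upgrades this to $\mathbb{R}$-linearity. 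Consequently the right $\mathfrak{A}$-action factors through an $\mathbb{R}$-algebra map $A_{\mathbb{R}}=\mathbb{R}\otimes_{\mathbb{Z}}\mathfrak{A}\to\mathrm{End}_{\mathbb{R}}(\mathbb{R}^{n})$, turning $Y$ into a finitely generated, hence projective, right $A_{\mathbb{R}}$-module; so $F(Y)\cong Y$ in $\mathsf{PLCA}_{\mathfrak{A},\mathbb{R}}^{ic}$ and $X$ lies in the essential image. Full faithfulness follows from the same principle: a continuous $\mathfrak{A}$-linear map between vector $\mathfrak{A}$-modules is $\mathbb{R}$-linear by the argument above, hence $A_{\mathbb{R}}$-linear, while conversely every $A_{\mathbb{R}}$-linear map between finite-dimensional real vector spaces is automatically continuous.

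For the second equivalence, the same continuity argument shows that every object of $\mathsf{LCA}_{\mathfrak{A},\mathbb{R}}$ is already a finitely generated projective right $A_{\mathbb{R}}$-module, and $\mathsf{LCA}_{\mathfrak{A},\mathbb{R}}$ is itself idempotent complete because idempotents in such modules split by semisimplicity of $A_{\mathbb{R}}$. The fully exact inclusion $\mathsf{PLCA}_{\mathfrak{A},\mathbb{R}}\hookrightarrow\mathsf{LCA}_{\mathfrak{A},\mathbb{R}}$ therefore extends, by the universal property of idempotent completion, to an exact functor $\mathsf{PLCA}_{\mathfrak{A},\mathbb{R}}^{ic}\to\mathsf{LCA}_{\mathfrak{A},\mathbb{R}}$ which is fully faithful (because the inclusion is) and essentially surjective (by the same argument applied inside $\mathsf{LCA}_{\mathfrak{A}}$). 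The sole genuine obstacle anywhere in the argument is the continuity-to-$\mathbb{R}$-linearity passage for the right $\mathfrak{A}$-action; once that is pinned down, the remainder is the formal machinery of idempotent completion.
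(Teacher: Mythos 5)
Your proposal is correct and follows essentially the same route as the paper: Proposition \ref{prop_StdSeq} supplies $A_{\mathbb{R}}\in\mathsf{PLCA}_{\mathfrak{A},\mathbb{R}}$ (the only use of the Gorenstein hypothesis), semisimplicity of $A_{\mathbb{R}}$ gives essential surjectivity after idempotent completion, and the continuity--divisibility--density argument upgrades continuous $\mathfrak{A}$-linear maps to $A_{\mathbb{R}}$-linear ones for full faithfulness. The paper packages the construction of the functor via $2$-functoriality of idempotent completion applied to free modules, but this is only a cosmetic difference from your direct-summand description.
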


\begin{proof}
Let $F(A_{\mathbb{R}})$ be the category of finitely generated free right
$A_{\mathbb{R}}$-modules. We have an exact functor%
\[
F(A_{\mathbb{R}})\longrightarrow\mathsf{PLCA}_{\mathfrak{A},\mathbb{R}}%
\]
sending $A_{\mathbb{R}}$ to itself, equipped with the real topology. We have
$A_{\mathbb{R}}\in\mathsf{PLCA}_{\mathfrak{A}}$ thanks to Proposition
\ref{prop_StdSeq}. By the $2$-functoriality of idempotent completion
\cite[\S 6]{MR2606234}, we get a unique induced exact functor $C:P_{f}%
(A_{\mathbb{R}})\longrightarrow\mathsf{PLCA}_{\mathfrak{A},\mathbb{R}}^{ic}$.
By the same argument, the inclusion%
\[
\mathsf{PLCA}_{\mathfrak{A}}\hookrightarrow\mathsf{LCA}_{\mathfrak{A}}%
\]
functorially induces an exact functor $C^{\prime}:\mathsf{PLCA}_{\mathfrak{A}%
,\mathbb{R}}^{ic}\longrightarrow\mathsf{LCA}_{\mathfrak{A},\mathbb{R}}$ since
$\mathsf{LCA}_{\mathfrak{A}}$ is already idempotent complete (as it is
quasi-abelian), and moreover the image consists only of vector modules. We
show that $C$ is essentially surjective: Every vector module $X$ is a right
$A_{\mathbb{R}}$-module, necessarily finitely generated since it must be
finite-dimensional as a real vector space. Since $A_{\mathbb{R}}$ is
semisimple, all its modules are projective and therefore $X$ is a finitely
generated projective right $A_{\mathbb{R}}$-module. Hence, $X$ is a direct
summand of some $A_{\mathbb{R}}^{n}$. However, by Proposition
\ref{prop_StdSeq} we have $A_{\mathbb{R}}\in\mathsf{PLCA}_{\mathfrak{A}%
,\mathbb{R}}$, so the idempotent completion settles the claim. Note that this
argument did not use $X\in\mathsf{PLCA}_{\mathfrak{A}}$, so it also settles
essential surjectivity of $C^{\prime}$. For $C^{\prime}$ it is clear that the
functor is fully faithful. For $C$ it follows from continuity. (More
precisely: Any $A_{\mathbb{R}}$-module homomorphism is also an $\mathbb{R}%
$-linear map and all linear maps between real vector spaces are continuous in
the real topology. Conversely, any abelian group homomorphism between uniquely
divisible groups must be a $\mathbb{Q}$-vector space map. By continuity, it
then must be an $\mathbb{R}$-linear map using the density of $\mathbb{Q}%
\subset\mathbb{R}$. Finally, this means that the $\mathfrak{A}$-module
homomorphisms are even $\mathfrak{A}\otimes_{\mathbb{Z}}\mathbb{R}%
=A_{\mathbb{R}}$ module homomorphisms)
\end{proof}

\begin{example}
We point out that this lemma would not hold without the idempotent completion.
Take $A:=\mathbb{Q}[\sqrt{2}]$, a number field. Then $\mathfrak{A}%
:=\mathbb{Z}[\sqrt{2}]$ is the ring of integers, and thus a maximal order. We
have $A_{\mathbb{R}}\simeq\mathbb{R}_{\sigma}\oplus\mathbb{R}_{\sigma^{\prime
}}$, where $\sigma,\sigma^{\prime}$ correspond to the two real embeddings
$\sqrt{2}\mapsto\pm\sqrt{2}$, giving the two possible $\mathfrak{A}$-module
structures on the reals. While $\mathbb{R}_{\sigma}$ is a vector module, we
have $\mathbb{R}_{\sigma}\notin\mathsf{PLCA}_{\mathfrak{A}}$, for otherwise
there would be a PI-presentation%
\[
P\hookrightarrow\mathbb{R}_{\sigma}\twoheadrightarrow I\text{.}%
\]
Here $P\in P_{f}(\mathfrak{A})$. As $A$ has class number one, $\mathfrak{A}$
is a principal ideal domain, so all projective $\mathfrak{A}$-modules are
free. As the underlying abelian group of $\mathfrak{A}$ is $\mathbb{Z}^{2}$,
it follows that the underlying LCA group of $P$ can only be $\mathbb{Z}^{2n}$.
On the other hand, $I$ is compact (Lemma
\ref{lemma_ObjectsInIProdCompactConnected}). However, all cocompact closed
subgroups of $\mathbb{R}$ are isomorphic to $\mathbb{Z}$. Thus, no
PI-presentation can exist.
\end{example}

\begin{corollary}
If $\mathfrak{A}\subset A$ is a Gorenstein order, all vector right
$\mathfrak{A}$-modules lie in $\mathsf{PLCA}_{\mathfrak{A}}^{ic}$, and they
are both injective and projective objects in this category.
\end{corollary}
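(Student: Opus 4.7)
The plan is to reduce the corollary to the single object $A_{\mathbb{R}}$ and then invoke Lemma \ref{lemma_ProjectivesStayProjectiveInCPI} to transfer projectivity/injectivity from the ambient $\mathsf{LCA}_{\mathfrak{A}}$ down to $\mathsf{PLCA}_{\mathfrak{A}}$, and then up to its idempotent completion.

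The first assertion, that every vector right $\mathfrak{A}$-module lies in $\mathsf{PLCA}_{\mathfrak{A}}^{ic}$, is immediate from the second half of Lemma \ref{lemma_IdentifyPLCAVectorIC}, which identifies $\mathsf{PLCA}_{\mathfrak{A},\mathbb{R}}^{ic}$ with the category $\mathsf{LCA}_{\mathfrak{A},\mathbb{R}}$ of all vector $\mathfrak{A}$-modules. The same lemma furthermore supplies an exact equivalence $P_f(A_{\mathbb{R}}) \overset{\sim}{\to} \mathsf{PLCA}_{\mathfrak{A},\mathbb{R}}^{ic}$; since $A_{\mathbb{R}}$ is semisimple, every finitely generated projective right $A_{\mathbb{R}}$-module is a direct summand of some $A_{\mathbb{R}}^{n}$. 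Consequently, every vector $\mathfrak{A}$-module is a direct summand of $A_{\mathbb{R}}^{n}$ inside $\mathsf{PLCA}_{\mathfrak{A}}^{ic}$, and both projectivity and injectivity are stable under finite direct sums and under passage to direct summands in an idempotent complete exact category. Hence it suffices to verify that the single object $A_{\mathbb{R}}$ is both projective and injective in $\mathsf{PLCA}_{\mathfrak{A}}^{ic}$.

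By Proposition \ref{prop_StdSeq} we have $A_{\mathbb{R}} \in \mathsf{PLCA}_{\mathfrak{A}}$. By Lemma \ref{lemma_ProjectivesStayProjectiveInCPI}, applied to the subcategory $\mathsf{PLCA}_{\mathfrak{A}} = \mathsf{LCA}_{\mathfrak{A}}\langle P_{\oplus}(\mathfrak{A}), I_{\Pi}(\mathfrak{A})\rangle$, it is enough to know that $A_{\mathbb{R}}$ is both projective and injective in the ambient $\mathsf{LCA}_{\mathfrak{A}}$. This is the equivariant version of the classical fact that $\mathbb{R}^{n}$ is simultaneously projective and injective in $\mathsf{LCA}$, and it is recorded for $\mathsf{LCA}_{\mathfrak{A}}$ in \cite{etnclca}. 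Combining this with the lemma immediately gives projectivity and injectivity of $A_{\mathbb{R}}$ in $\mathsf{PLCA}_{\mathfrak{A}}$, and the passage to $\mathsf{PLCA}_{\mathfrak{A}}^{ic}$ is then automatic.

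The only step that I expect to require any external input is the projectivity and injectivity of $A_{\mathbb{R}}$ inside the larger category $\mathsf{LCA}_{\mathfrak{A}}$; should one wish to avoid the citation, one can prove it directly by exploiting that $A_{\mathbb{R}} = \mathfrak{A} \otimes_{\mathbb{Z}} \mathbb{R}$ is a divisible torsionfree $\mathfrak{A}$-module whose underlying LCA group is Euclidean, and lifting the standard splitting arguments for $\mathbb{R}^{n}$ in $\mathsf{LCA}$ $\mathfrak{A}$-equivariantly. Everything else is transparent bookkeeping with direct summands and the two preceding lemmas.
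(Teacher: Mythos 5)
Your proof is correct and follows essentially the same route as the paper: membership via Lemma \ref{lemma_IdentifyPLCAVectorIC}, projectivity/injectivity of vector modules in $\mathsf{LCA}_{\mathfrak{A}}$ from \cite[Proposition 8.1]{etnclca}, and transfer to the subcategory via Lemma \ref{lemma_ProjectivesStayProjectiveInCPI}. Your reduction to the single object $A_{\mathbb{R}}$ and the explicit passage through direct summands in the idempotent completion is in fact slightly more careful than the paper's one-line argument, since a general vector module (e.g.\ $\mathbb{R}_{\sigma}$ in the example following Lemma \ref{lemma_IdentifyPLCAVectorIC}) need not lie in $\mathsf{PLCA}_{\mathfrak{A}}$ itself, so Lemma \ref{lemma_ProjectivesStayProjectiveInCPI} cannot be applied to it directly.
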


\begin{proof}
As vector modules are projective (resp. injective) in $\mathsf{LCA}%
_{\mathfrak{A}}$ by \cite[Proposition 8.1]{etnclca}, they remain so in
$\mathsf{PLCA}_{\mathfrak{A}}$ (Lemma
\ref{lemma_ProjectivesStayProjectiveInCPI}).
\end{proof}

\begin{proposition}
\label{Prop_ExistResolutionsInFiniteTypeCase}Suppose $\mathfrak{A}\subset A$
is a Gorenstein order.

\begin{enumerate}
\item Then for every finitely generated projective right $\mathfrak{A}$-module
$P$ the sequence%
\begin{equation}
P\hookrightarrow P_{\mathbb{R}}\twoheadrightarrow P_{\mathbb{R}}/P \label{lv1}%
\end{equation}
is a PI-presentation, where $P_{\mathbb{R}}:=\mathbb{R}\otimes_{\mathbb{Z}}P$
is regarded as equipped with the real vector space topology. In particular,
$P_{\mathbb{R}}/P\in I_{\Pi}(\mathfrak{A})$.

\item Moreover, this is a projective resolution of $P_{\mathbb{R}}/P$ in
$\mathsf{PLCA}_{\mathfrak{A}}$.

\item Moreover, this is an injective resolution of $P$ in $\mathsf{PLCA}%
_{\mathfrak{A}}$.
\end{enumerate}
\end{proposition}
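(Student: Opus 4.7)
The plan is to address the three claims in order, reducing everything to the ``universal'' PI-presentation $\mathfrak{A}\hookrightarrow A_{\mathbb{R}}\twoheadrightarrow A_{\mathbb{R}}/\mathfrak{A}$ already given by Proposition \ref{prop_StdSeq}, and then invoking Lemma \ref{lemma_ProjectivesStayProjectiveInCPI} to transport projectivity/injectivity from $\mathsf{LCA}_{\mathfrak{A}}$ to $\mathsf{PLCA}_{\mathfrak{A}}$.

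For claim (1), I first note that the sequence \ref{lv1} is genuinely exact in $\mathsf{LCA}_{\mathfrak{A}}$: as an underlying LCA group it is $\mathbb{Z}^{r}\hookrightarrow\mathbb{R}^{r}\twoheadrightarrow\mathbb{T}^{r}$ with a closed lattice inclusion and a compact quotient. Clearly $P\in P_{f}(\mathfrak{A})\subseteq P_{\oplus}(\mathfrak{A})$. The real content is showing $P_{\mathbb{R}}/P\in I_{\Pi}(\mathfrak{A})$. Since $P$ is finitely generated projective, pick $Q$ with $P\oplus Q\cong\mathfrak{A}^{n}$. Then $P_{\mathbb{R}}/P$ is a direct summand of $(P\oplus Q)_{\mathbb{R}}/(P\oplus Q)\cong(A_{\mathbb{R}}/\mathfrak{A})^{n}$. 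Applying Pontryagin duality and using Proposition \ref{prop_StdSeq}, $(P_{\mathbb{R}}/P)^{\vee}$ becomes a direct summand of $(\mathfrak{A}^{\ast})^{n}$, which is a finitely generated projective left $\mathfrak{A}$-module by the Gorenstein hypothesis. A direct summand of a finitely generated projective is finitely generated projective, so $(P_{\mathbb{R}}/P)^{\vee}\in P_{f}(\mathfrak{A}^{op})$ and hence $P_{\mathbb{R}}/P\in I_{\Pi}(\mathfrak{A})$. This gives the PI-presentation and in particular $P_{\mathbb{R}}\in\mathsf{PLCA}_{\mathfrak{A}}$.

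For claim (2), I need to show both $P$ and $P_{\mathbb{R}}$ are projective in $\mathsf{PLCA}_{\mathfrak{A}}$. The object $P$ lies in $P_{f}(\mathfrak{A})\subseteq P_{\oplus}(\mathfrak{A})$, so it is projective in $\mathsf{PLCA}_{\mathfrak{A}}$ by the remark following Lemma \ref{lemma_ProjectivesStayProjectiveInCPI}. For $P_{\mathbb{R}}$, it is a vector $\mathfrak{A}$-module, hence projective in $\mathsf{LCA}_{\mathfrak{A}}$ by \cite[Proposition 8.1]{etnclca}; having already shown $P_{\mathbb{R}}\in\mathsf{PLCA}_{\mathfrak{A}}$ in step (1), Lemma \ref{lemma_ProjectivesStayProjectiveInCPI} promotes this to projectivity in $\mathsf{PLCA}_{\mathfrak{A}}$. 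Thus \ref{lv1} is a length-one resolution of $P_{\mathbb{R}}/P$ by projective objects.

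For claim (3), the argument is symmetric. $P_{\mathbb{R}}/P\in I_{\Pi}(\mathfrak{A})$ is injective in $\mathsf{PLCA}_{\mathfrak{A}}$, again by the remark after Lemma \ref{lemma_ProjectivesStayProjectiveInCPI} (since objects of $I_{\Pi}(\mathfrak{A})$ are injective in $\mathsf{LCA}_{\mathfrak{A}}$ by construction). And $P_{\mathbb{R}}$, being a vector module, is injective in $\mathsf{LCA}_{\mathfrak{A}}$ by \cite[Proposition 8.1]{etnclca}, and transported into $\mathsf{PLCA}_{\mathfrak{A}}$ by Lemma \ref{lemma_ProjectivesStayProjectiveInCPI}. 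The main (small) obstacle in the whole argument is the step identifying $P_{\mathbb{R}}/P$ with a direct summand of $(A_{\mathbb{R}}/\mathfrak{A})^{n}$ in the category $\mathsf{LCA}_{\mathfrak{A}}$: one must check that choosing an algebraic idempotent $e\in\mathrm{End}(\mathfrak{A}^{n})$ splitting off $P$ induces continuous splittings at each stage, which is straightforward since all the maps involved are continuous $\mathfrak{A}$-linear and the topologies are the canonical discrete/real/quotient ones.
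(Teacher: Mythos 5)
Your proposal is correct and follows essentially the same route as the paper: realize Sequence \ref{lv1} as a direct summand of copies of the standard sequence from Proposition \ref{prop_StdSeq} (the paper uses an idempotent $e$ with $P=e\mathfrak{A}^{n}$ where you use a complement $Q$, which is the same thing), identify $(P_{\mathbb{R}}/P)^{\vee}$ as a direct summand of $(\mathfrak{A}^{\ast})^{n}$, and then transport projectivity/injectivity of $P$, $P_{\mathbb{R}}$ and $P_{\mathbb{R}}/P$ from $\mathsf{LCA}_{\mathfrak{A}}$ to $\mathsf{PLCA}_{\mathfrak{A}}$ via Lemma \ref{lemma_ProjectivesStayProjectiveInCPI} and \cite[Proposition 8.1]{etnclca}. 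No gaps.
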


\begin{proof}
(1) Since $P$ is projective, there exists some $n\geq0$ and idempotent $e$
with $P=e\mathfrak{A}^{n}$. After tensoring with the reals, this cuts out the
exact sequence of Equation \ref{lv1} as a direct summand of a direct sum of
sequences of Proposition \ref{prop_StdSeq}. Thus, $(P_{\mathbb{R}}/P)^{\vee}$
is a direct summand of $(A_{\mathbb{R}}/\mathfrak{A})^{\vee}$ and thus
injective, and $P_{f}(\mathfrak{A})$ is closed under direct summands in all
right $\mathfrak{A}$-modules as well. We arrive at the said PI-presentation.
(2) As $P$ and $P_{\mathbb{R}}$ are projective objects in $\mathsf{LCA}%
_{\mathfrak{A}}$ by \cite[Proposition 8.1]{etnclca}, they remain projective in
$\mathsf{PLCA}_{\mathfrak{A}}$ by Lemma
\ref{lemma_ProjectivesStayProjectiveInCPI}, and the claim follows. (3) Use
\cite[Proposition 8.1]{etnclca} analogously.
\end{proof}

\begin{remark}
Note that all discrete modules in the above proof are finitely generated, so
we do not run into the issue that $P_{\oplus}(\mathfrak{A})$ itself need not
be idempotent complete in general (Lemma
\ref{lemma_SolvableGroupInfiniteIndecompProjective}).
\end{remark}

\begin{definition}
Let $\mathsf{PLCA}_{\mathfrak{A},\mathbb{R}D}$ be the full subcategory of
$\mathsf{PLCA}_{\mathfrak{A}}$ of objects which can be written as a direct sum%
\[
X\simeq P\oplus V
\]
with $P\in P_{\oplus}(\mathfrak{A})$ and $V$ a vector right $\mathfrak{A}$-module.
\end{definition}

\begin{lemma}
$\mathsf{PLCA}_{\mathfrak{A},\mathbb{R}D}$ is an extension-closed subcategory
of $\mathsf{PLCA}_{\mathfrak{A}}$ (and even in $\mathsf{LCA}_{\mathfrak{A}}$).
\end{lemma}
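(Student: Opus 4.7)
The plan is, given an admissible short exact sequence $X' \hookrightarrow X \twoheadrightarrow X''$ in $\mathsf{PLCA}_{\mathfrak{A}}$ with outer terms $X' \simeq P' \oplus V'$ and $X'' \simeq P'' \oplus V''$ in $\mathsf{PLCA}_{\mathfrak{A},\mathbb{R}D}$, to peel off the vector subobject $V'$ from $X$ using injectivity, and then split the remaining extension of $P'' \oplus V''$ by $P'$ using projectivity. The essential input is that, by \cite[Proposition 8.1]{etnclca} together with Lemma \ref{lemma_ProjectivesStayProjectiveInCPI}, vector $\mathfrak{A}$-modules are both injective and projective in $\mathsf{PLCA}_{\mathfrak{A}}$, while objects of $P_{\oplus}(\mathfrak{A})$ are projective there.

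First I would use injectivity of $V'$ to extend $\operatorname{id}_{V'}$ along the admissible monic $V' \hookrightarrow X' \hookrightarrow X$, obtaining a retraction $X \twoheadrightarrow V'$. Since $\mathsf{PLCA}_{\mathfrak{A}}$ is weakly idempotent complete, this split admissible monic promotes to a direct sum decomposition $X \simeq V' \oplus X_{2}$. Next, applying Noether's lemma \cite[Lemma 3.5]{MR2606234} to the admissible filtration $V' \hookrightarrow X' \hookrightarrow X$, I would identify $X_{2} \cong X/V'$ as the middle term of an exact sequence
\[
P' \hookrightarrow X_{2} \twoheadrightarrow X''\text{,}
\]
using $X'/V' \cong P'$ and $X/X' \cong X''$. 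Projectivity of $X'' \simeq P'' \oplus V''$ then splits this admissible epic, yielding $X_{2} \simeq P' \oplus P'' \oplus V''$.

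Collecting,
\[
X \simeq V' \oplus P' \oplus P'' \oplus V'' \simeq (P' \oplus P'') \oplus (V' \oplus V'')\text{,}
\]
which has the required form, since $P_{\oplus}(\mathfrak{A})$ and the class of vector $\mathfrak{A}$-modules are each closed under finite direct sums. The same argument applies verbatim in the ambient $\mathsf{LCA}_{\mathfrak{A}}$, because $V'$ is still injective and $P'' \oplus V''$ is still projective there by \cite[Proposition 8.1]{etnclca}. I do not anticipate a genuine obstacle; the only minor point is confirming that both splittings produce honest direct sum decompositions in the exact category, which is automatic from weak idempotent completeness.
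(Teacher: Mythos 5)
Your argument is correct, but it takes a more direct route than the paper. The paper's proof simply feeds the extension into the general $3\times3$ machinery of Lemma \ref{lemma_ExactSeqTo3By3Diagram}, taking $\mathsf{P}=P_{\oplus}(\mathfrak{A})$ and $\mathsf{I}$ the vector $\mathfrak{A}$-modules (legitimate because vector modules are injective in $\mathsf{LCA}_{\mathfrak{A}}$ by \cite[Proposition 8.1]{etnclca}); this outputs a PI-presentation $P'\oplus P''\hookrightarrow X\twoheadrightarrow V'\oplus V''$, which then splits because vector modules are also projective. You instead inline the two splittings by hand: first peel off $V'$ via its injectivity, then apply Noether's Lemma to $V'\hookrightarrow X'\hookrightarrow X$ and split the resulting epic onto the projective object $P''\oplus V''$. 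These are really the same mechanisms — the proof of Lemma \ref{lemma_ExactSeqTo3By3Diagram} performs exactly these injectivity/projectivity splittings in greater generality — so what you gain is a self-contained argument not routed through the $3\times3$ lemma, at the cost of re-deriving a special case of it. Two small remarks: weak idempotent completeness is not actually needed for your splittings, since both $V'\hookrightarrow X$ and $X_{2}\twoheadrightarrow X''$ are already admissible and a retraction (resp. section) of an admissible monic (resp. epic) splits the corresponding short exact sequence; and the projectivity of $P''$ in $\mathsf{LCA}_{\mathfrak{A}}$, which you use to split the second sequence, is exactly the standing hypothesis on $\mathsf{P}=P_{\oplus}(\mathfrak{A})$ in Definition \ref{def_CPI}, so it is available as you assume.
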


\begin{proof}
Take $\mathsf{C}:=\mathsf{LCA}_{\mathfrak{A}}$, which is weakly idempotent
complete. We want to apply Lemma \ref{lemma_ExactSeqTo3By3Diagram} to
$\mathsf{C}$ with $\mathsf{P}:=P_{\oplus}(\mathfrak{A})$ and $\mathsf{I}$ the
full subcategory of vector $\mathfrak{A}$-modules. This works since vector
modules are injective in $\mathsf{LCA}_{\mathfrak{A}}$ \cite[Proposition
8.1]{etnclca}. Every object $X\in\mathsf{PLCA}_{\mathfrak{A},\mathbb{R}D}$ has
the PI-presentation%
\[
P\hookrightarrow X\twoheadrightarrow V
\]
with respect to this choice of $\mathsf{P}$ and $\mathsf{I}$. Now let%
\[
X^{\prime}\hookrightarrow X\twoheadrightarrow X^{\prime\prime}%
\]
be an exact sequence with $X^{\prime},X^{\prime\prime}\in\mathsf{PLCA}%
_{\mathfrak{A},\mathbb{R}D}$ and $X\in\mathsf{LCA}_{\mathfrak{A}}$. Use Lemma
\ref{lemma_ExactSeqTo3By3Diagram}. It provides a PI-presentation for $X$ of
the shape%
\[
P\hookrightarrow X\twoheadrightarrow V\text{,}%
\]
with $P\in\mathsf{P}$, $V\in\mathsf{I}$, but since vector modules are also
projective \cite[Proposition 8.1]{etnclca}, this splits, giving $X\simeq
P\oplus V$, proving the claim.
\end{proof}

It follows that $\mathsf{PLCA}_{\mathfrak{A},\mathbb{R}D}$ is a fully exact
subcategory of $\mathsf{LCA}_{\mathfrak{A}}$.

\begin{lemma}
\label{lemma_E1}The category $\mathsf{PLCA}_{\mathfrak{A},\mathbb{R}}$ is left
$s$-filtering in $\mathsf{PLCA}_{\mathfrak{A},\mathbb{R}D}$.
\end{lemma}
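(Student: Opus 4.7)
The plan is to verify the two defining axioms of left $s$-filtering in the style of Propositions \ref{prop_LF1} and \ref{prop_LF2}. By definition every $X\in\mathsf{PLCA}_{\mathfrak{A},\mathbb{R}D}$ admits a decomposition $X\simeq P\oplus V$ with $P\in P_{\oplus}(\mathfrak{A})$ (discrete and $\mathbb{Z}$-torsion free, since a projective $\mathbb{Z}$-module is torsion free) and $V$ a vector $\mathfrak{A}$-module, hence connected. The single observation driving both halves of the argument is that every continuous $\mathfrak{A}$-module homomorphism from a vector module into $P$ is zero: the image of a connected space in a discrete one is a singleton, and a group homomorphism sends identity to identity.

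For the left filtering property, suppose $g\colon Y\to X$ is given with $Y\in\mathsf{PLCA}_{\mathfrak{A},\mathbb{R}}$ and $X\simeq P\oplus V$. The composition $Y\xrightarrow{g} X\twoheadrightarrow P$ vanishes by the observation above, so $g$ factors through the split admissible monic $V\hookrightarrow X$. Combined with the subsidiary fact that the vector summand $V$ itself belongs to $\mathsf{PLCA}_{\mathfrak{A},\mathbb{R}}$ (discussed below), this provides the required factorisation of $g$ through an object of the small subcategory.

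For the left special property, let $e\colon X\twoheadrightarrow V''$ be an admissible epic with $X\simeq P\oplus V\in\mathsf{PLCA}_{\mathfrak{A},\mathbb{R}D}$ and $V''\in\mathsf{PLCA}_{\mathfrak{A},\mathbb{R}}$. Vector modules are projective in $\mathsf{LCA}_{\mathfrak{A}}$ by \cite[Proposition 8.1]{etnclca}, and hence also in $\mathsf{PLCA}_{\mathfrak{A}}$ by Lemma \ref{lemma_ProjectivesStayProjectiveInCPI}, so $e$ admits a section $s\colon V''\to X$. The $P$-component of $s$ vanishes by the same observation, so $s$ takes values in $V$ and is thereby a section of the restriction $e|_{V}\colon V\twoheadrightarrow V''$. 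This exhibits $e|_{V}$ as a split admissible epic entirely inside $\mathsf{PLCA}_{\mathfrak{A},\mathbb{R}}$, and the commutative square
\[
\xymatrix{V \ar@{^{(}->}[d] \ar@{->>}[r]^{e|_V} & V'' \ar@{=}[d] \\ X \ar@{->>}[r]_{e} & V''}
\]
delivers the refinement demanded by left specialness.

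The one genuinely delicate point, and the main obstacle, is the subsidiary claim invoked in both parts above: that the vector direct summand $V$ of any $X\simeq P\oplus V\in\mathsf{PLCA}_{\mathfrak{A}}$ is itself an object of $\mathsf{PLCA}_{\mathfrak{A}}$. I plan to extract this from any PI-presentation $P'\hookrightarrow X\twoheadrightarrow I'$ of $X$ by using that $I'$ is compact and connected (Lemma \ref{lemma_ObjectsInIProdCompactConnected}), which severely restricts how $I'$-data can interact with the discrete torsion-free summand $P$; a diagram chase comparing the PI-decomposition with the splitting $X\simeq P\oplus V$, in the spirit of the proof of Lemma \ref{lemma_SplitOffInfiniteProjectivePart}, should reshuffle the given PI-presentation of $X$ into one of the form $P''\hookrightarrow V\twoheadrightarrow I'$ with $P''\in P_{\oplus}(\mathfrak{A})$, placing $V$ in $\mathsf{PLCA}_{\mathfrak{A},\mathbb{R}}$ as needed.
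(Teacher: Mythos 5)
Your overall strategy is the same as the paper's: connectedness of the source kills any component mapping into the discrete summand (left filtering), and projectivity of vector modules supplies a section (left special). The one substantive divergence is in the left special step. The paper takes the section $s\colon V''\to X$ itself as the middle vertical arrow and puts the trivial sequence $0\hookrightarrow V''\overset{1}{\twoheadrightarrow}V''$ on top; this only requires $V''\in\mathsf{PLCA}_{\mathfrak{A},\mathbb{R}}$, which is given, so no auxiliary membership claim is needed there at all. Your version instead routes through the entire vector summand $V$, so your top row is really $\ker(e|_V)\hookrightarrow V\twoheadrightarrow V''$, and left specialness then additionally demands $\ker(e|_V)\in\mathsf{PLCA}_{\mathfrak{A},\mathbb{R}}$. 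That is not automatic: $\mathsf{PLCA}_{\mathfrak{A},\mathbb{R}}$ is not closed under direct summands of vector modules (see the $\mathbb{Z}[\sqrt{2}]$ example after Lemma \ref{lemma_IdentifyPLCAVectorIC}). The obligation is dischargeable — $\ker(e|_V)$ is precisely the vector summand of $\ker(e)\in\mathsf{PLCA}_{\mathfrak{A},\mathbb{R}D}$, so your subsidiary claim applied to $\ker(e)$ rather than to $X$ covers it — but you do not say this, and the paper's choice of diagram makes the whole issue disappear; I would adopt it.

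On the subsidiary claim itself (that the vector summand $V$ of $X\simeq P\oplus V\in\mathsf{PLCA}_{\mathfrak{A},\mathbb{R}D}$ lies in $\mathsf{PLCA}_{\mathfrak{A}}$): you are right that it is genuinely needed for left filtering, and the paper's proof simply asserts the factorization $V'\to V\hookrightarrow X$ without comment, so you are being more careful here than the source. Your plan for it is viable and can be made concrete along the lines you indicate: for a PI-presentation $P'\hookrightarrow X\overset{\pi}{\twoheadrightarrow}I'$, the summand $V$ is open in $X$ and $I'$ is connected, so $\pi|_V$ is an open surjection onto $I'$ with kernel $J:=V\cap P'$, a lattice; running Lemma \ref{lemma_SplitOffInfiniteProjectivePart} with $F=0$ gives $X\simeq M\oplus P'_\infty$ with $M=V+P'_0$ and $J\subseteq P'_0\in P_f(\mathfrak{A})$, whence $P\cong X/V\cong (P'_0/J)\oplus P'_\infty$; thus $P'_0/J$ is finitely generated projective, the surjection $P'_0\twoheadrightarrow P'_0/J$ splits, $J\in P_f(\mathfrak{A})$, and $J\hookrightarrow V\twoheadrightarrow I'$ is the desired PI-presentation of $V$. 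With that supplied and the left special diagram adjusted as above, your argument is complete.
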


\begin{proof}
\textit{(Left filtering)}\ If $f:V^{\prime}\rightarrow P\oplus V$ is any
morphism with $V^{\prime}\in\mathsf{PLCA}_{\mathfrak{A},\mathbb{R}}$, then
since $V^{\prime}$ is connected, we get a factorization $V^{\prime}\rightarrow
V\hookrightarrow P\oplus V$ of $f$. \textit{(Left special)} If%
\[
X^{\prime}\hookrightarrow X\twoheadrightarrow V
\]
is an exact sequence with $V\in\mathsf{PLCA}_{\mathfrak{A},\mathbb{R}}$, then
since $V$ is projective, we get a splitting, providing us with the commutative
diagram%
\[%
\xymatrix{
0 \ar@{^{(}->}[r] \ar@{->}[d] & V \ar@{->>}[r]^{1} \ar@{->}[d] & V \ar@
{=}[d] \\
X^{\prime} \ar@{^{(}->}[r] & X \ar@{->>}[r] & V \\
}%
\]
settling left specialness.
\end{proof}

\begin{lemma}
\label{lemma_E2}There is an exact equivalence of exact categories%
\[
P_{\oplus}(\mathfrak{A})\overset{\sim}{\longrightarrow}\mathsf{PLCA}%
_{\mathfrak{A},\mathbb{R}D}/\mathsf{PLCA}_{\mathfrak{A},\mathbb{R}}\text{.}%
\]

\end{lemma}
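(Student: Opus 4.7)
The plan is to mirror the proof of Lemma \ref{lemma_Equiv}, replacing the role of compactly generated objects by vector $\mathfrak{A}$-modules throughout. First, we define the functor: a discrete topology equips any $P \in P_{\oplus}(\mathfrak{A})$ with the structure of an object of $\mathsf{PLCA}_{\mathfrak{A}}$, and it visibly lies in $\mathsf{PLCA}_{\mathfrak{A},\mathbb{R}D}$ via the trivial decomposition $P \simeq P \oplus 0$. This defines an exact functor $P_{\oplus}(\mathfrak{A}) \to \mathsf{PLCA}_{\mathfrak{A},\mathbb{R}D}$ which we post-compose with the quotient functor to the Schlichting quotient; the latter makes sense by Lemma \ref{lemma_E1}.

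For essential surjectivity, every object of $\mathsf{PLCA}_{\mathfrak{A},\mathbb{R}D}$ is by definition of the form $P \oplus V$ with $P \in P_{\oplus}(\mathfrak{A})$ and $V$ a vector $\mathfrak{A}$-module. Since $V \in \mathsf{PLCA}_{\mathfrak{A},\mathbb{R}}$, the admissible monic $P \hookrightarrow P \oplus V$ has cokernel $V$ which is zero in the quotient, so $P \oplus V$ is isomorphic to the discrete $P$ in the quotient category (cf.\ \cite[Proposition 2.19]{MR3510209}).

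For full faithfulness, morphisms in the quotient are represented by roofs
\[
Y_{1} \overset{e}{\twoheadleftarrow} Y_{1}^{\prime} \rightarrow Y_{2},
\]
where $e$ is an admissible epic whose kernel $K$ lies in $\mathsf{PLCA}_{\mathfrak{A},\mathbb{R}}$, i.e.\ is a vector $\mathfrak{A}$-module. Here $Y_{1}, Y_{2}$ come from the strict image of our functor, hence are discrete projective $\mathfrak{A}$-modules. Writing $Y_{1}^{\prime} \simeq P \oplus V^{\prime}$ with $P \in P_{\oplus}(\mathfrak{A})$ and $V^{\prime}$ a vector module, and using that $Y_{1}$ is discrete while any vector module is connected, the composition $V^{\prime} \hookrightarrow Y_{1}^{\prime} \twoheadrightarrow Y_{1}$ is zero; so one may replace $Y_{1}^{\prime}$ by $P$ and assume $Y_{1}^{\prime}$ is discrete. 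Then the short exact sequence $K \hookrightarrow Y_{1}^{\prime} \twoheadrightarrow Y_{1}$ has discrete outer terms, forcing its kernel $K$ to be both a vector module and discrete, hence $K = 0$. Thus $e$ is an isomorphism and the roof is equivalent to a plain morphism $Y_{1} \to Y_{2}$ in $P_{\oplus}(\mathfrak{A})$. The same analysis applied to the equivalence relation on roofs shows that two such morphisms coincide in the quotient precisely when they coincide in $P_{\oplus}(\mathfrak{A})$, giving full faithfulness.

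The main obstacle is the last step: verifying that the reduction of an arbitrary roof to a plain morphism in $P_{\oplus}(\mathfrak{A})$ really respects the equivalence relation of roofs; but this uses only the two ingredients already exploited (vector modules are connected, killing maps into discrete targets; and admissible extensions of a discrete by a vector module split trivially because vector modules are projective and the extension is actually zero). Combining essential surjectivity and full faithfulness yields the claimed exact equivalence.
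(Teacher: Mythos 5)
Your proposal is correct and follows essentially the same route as the paper: define the functor by equipping $P$ with the discrete topology, get essential surjectivity from the defining decomposition $X\simeq P\oplus V$ (the vector summand dies in the quotient), and reduce every roof to a plain module homomorphism by using that vector modules are connected (so map to zero in discrete targets) and that a kernel which is simultaneously a vector module and discrete must vanish. The only difference is that you spell out the splitting $Y_{1}'\simeq P\oplus V'$ and the compatibility with the equivalence relation on roofs a bit more explicitly than the paper does; no gap.
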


\begin{proof}
Send a module $P\in P_{\oplus}(\mathfrak{A})$ to itself, equipped with the
discrete topology. This is an exact functor. It is essentially surjective,
directly by the definition of $\mathsf{PLCA}_{\mathfrak{A},\mathbb{R}D}$.
Homomorphisms $X\rightarrow X^{\prime}$ on the right between objects in the
strict image correspond to roofs%
\[
X\overset{e}{\twoheadleftarrow}V\oplus P\rightarrow X^{\prime}%
\]
with $V$ a vector module and $e$ having vector module kernel. However, since
$V$ is connected but $X,X^{\prime}$ discrete, any such roof is trivially
equivalent to one with $V=0$. But for these the vector module kernel of $e$
must be trivial, i.e. $e$ must be an isomorphism in $\mathsf{PLCA}%
_{\mathfrak{A},\mathbb{R}D}$. Thus, any roof is equivalent to $X\overset
{1}{\twoheadleftarrow}X\rightarrow X^{\prime}$, i.e. we get just ordinary
right $\mathfrak{A}$-module homomorphisms. This shows that the functor in our
claim is fully faithful.
\end{proof}

\begin{lemma}
\label{lemma_C2Strong}Suppose%
\[
X^{\prime}\hookrightarrow V\oplus P\twoheadrightarrow V^{\prime\prime}\oplus
P^{\prime\prime}%
\]
is an exact sequence in $\mathsf{PLCA}_{\mathfrak{A}}$ whose middle and right
object lie in $\mathsf{PLCA}_{\mathfrak{A},\mathbb{R}D}$. Then $X^{\prime}%
\in\mathsf{PLCA}_{\mathfrak{A},\mathbb{R}D}$.
\end{lemma}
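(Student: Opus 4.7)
The overall strategy is: (i) split off the direct summand $V''\oplus P''$ using projectivity, (ii) use identity components to decompose $X'$ as a vector module plus a discrete module, and (iii) argue that this discrete part actually lies in $P_\oplus(\mathfrak{A})$ by invoking a PI-presentation of $X'$.

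First, by \cite[Proposition 8.1]{etnclca} both vector modules and objects of $P_\oplus(\mathfrak{A})$ are projective in $\mathsf{LCA}_{\mathfrak{A}}$, hence projective in $\mathsf{PLCA}_{\mathfrak{A}}$ by Lemma \ref{lemma_ProjectivesStayProjectiveInCPI}. Thus $V''\oplus P''$ is projective and the given sequence splits: $V\oplus P\cong X'\oplus V''\oplus P''$. Since the formation of the connected component of the identity is compatible with finite direct sums, one obtains $V\cong X'_0\oplus V''$, so $V_1 := X'_0$ is a direct summand of the vector module $V$ and is itself a vector $\mathfrak{A}$-module. The quotient $D := X'/V_1$ embeds into $(V\oplus P)/V\cong P$ and is therefore discrete. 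Using that $V_1$ is injective in $\mathsf{LCA}_{\mathfrak{A}}$ by \cite[Proposition 8.1]{etnclca}, the sequence $V_1\hookrightarrow X'\twoheadrightarrow D$ splits, yielding $X'\cong V_1\oplus D$.

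The main obstacle is now to show $D\in P_\oplus(\mathfrak{A})$. This is delicate because direct summands of objects in $P_\oplus(\mathfrak{A})$ need not lie in $P_\oplus(\mathfrak{A})$ (Lemma \ref{lemma_SolvableGroupInfiniteIndecompProjective}). The remedy is to exploit that $X'\in \mathsf{PLCA}_{\mathfrak{A}}$, which gives a PI-presentation $P_0\hookrightarrow X'\twoheadrightarrow I_0$. Since $I_0$ is compact connected (Lemma \ref{lemma_ObjectsInIProdCompactConnected}) while $D$ is discrete, the composite $D\hookrightarrow X'\twoheadrightarrow I_0$ vanishes, so $D\subseteq P_0$. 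Set $F := P_0\cap V_1$; this is a discrete closed subgroup of $V_1\cong\mathbb{R}^n$, hence isomorphic to some $\mathbb{Z}^r$ as an abelian group, so in particular a finitely generated $\mathfrak{A}$-submodule of $P_0$. A direct computation using the decomposition $X'=V_1\oplus D$ shows $P_0 = F\oplus D$; as a direct summand of the projective module $P_0$, the module $F$ is itself projective, thus $F\in P_f(\mathfrak{A})$.

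To finish, apply Lemma \ref{lemma_FinGenSubmodulesLieInFigGenProjectiveSubmodule} to the finitely generated submodule $F$ of $P_0\in P_\oplus(\mathfrak{A})$: there is a splitting $P_0\cong P_{00}\oplus P_\infty$ with $F\subseteq P_{00}\in P_f(\mathfrak{A})$ and $P_\infty\in P_\oplus(\mathfrak{A})$. Restricting the retraction $P_0\twoheadrightarrow F$ (whose kernel equals $D$) along $F\subseteq P_{00}\subseteq P_0$ gives a retraction $P_{00}\twoheadrightarrow F$, whence $P_{00}\cong F\oplus (P_{00}\cap D)$ with $P_{00}\cap D\in P_f(\mathfrak{A})$. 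Consequently
\[
D\;\cong\;P_0/F\;\cong\;(P_{00}/F)\oplus P_\infty\;\cong\;(P_{00}\cap D)\oplus P_\infty\;\in\;P_\oplus(\mathfrak{A}),
\]
and therefore $X'\cong V_1\oplus D\in\mathsf{PLCA}_{\mathfrak{A},\mathbb{R}D}$, as required.
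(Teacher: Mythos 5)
Your reduction to $X'\cong V_1\oplus D$ with $V_1$ a vector module and $D$ discrete is sound and parallels Step 2 of the paper's proof. The argument breaks down, however, at the claim that the composite $D\hookrightarrow X'\twoheadrightarrow I_0$ vanishes because ``$I_0$ is compact connected while $D$ is discrete.'' You have the vanishing principle backwards: a continuous homomorphism from a \emph{compact} (resp.\ connected) group to a discrete torsion-free group is zero, but a continuous homomorphism from a \emph{discrete} group to a compact connected group need not be --- e.g.\ $\mathbb{Z}\rightarrow\mathbb{T}$, $1\mapsto\alpha$ with $\alpha$ irrational. Concretely, take $\mathfrak{A}=\mathbb{Z}$ and $X'=\mathbb{R}\oplus\mathbb{Z}$ with the PI-presentation $P_0\hookrightarrow X'\twoheadrightarrow\mathbb{T}$, where $P_0=\{(a+b\alpha,b):a,b\in\mathbb{Z}\}$ and the epic is $(x,b)\mapsto x-b\alpha\bmod\mathbb{Z}$. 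Here $V_1=\mathbb{R}$, $D=0\oplus\mathbb{Z}$, the composite $D\rightarrow\mathbb{T}$ is $b\mapsto-b\alpha$ (injective with dense image), and $D\cap P_0=0$, so neither $D\subseteq P_0$ nor the decomposition $P_0=F\oplus D$ holds. Since everything after this point (the identification $D\cong P_0/F$ and the splitting argument placing $D$ in $P_\oplus(\mathfrak{A})$) rests on $P_0=F\oplus D$, the proof of the crucial claim $D\in P_\oplus(\mathfrak{A})$ collapses.

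This claim is exactly the delicate part of the lemma, because a direct summand of an object of $P_\oplus(\mathfrak{A})$ need not lie in $P_\oplus(\mathfrak{A})$ (Lemma \ref{lemma_SolvableGroupInfiniteIndecompProjective}), so you cannot simply quote that $D$ is a summand of $P$ either. The paper circumvents this by first splitting off infinite summands so as to reduce to the case where the PI-presentation of $X'$ has $P'\in P_f(\mathfrak{A})$ and $I'^{\vee}\in P_f(\mathfrak{A}^{op})$ (via Proposition \ref{prop_PLCAStructureThm}), which forces the discrete part $D'$ to be finitely generated; Noether's Lemma then yields $D'\hookrightarrow P\twoheadrightarrow P''$ with all terms finitely generated, and idempotent completeness of $P_f(\mathfrak{A})$ finishes the argument. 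To repair your proof you would need to choose the complement $D$ adapted to the presentation rather than arbitrarily --- which is precisely the content of Lemma \ref{lemma_SplitOffInfiniteProjectivePart} --- rather than work with an arbitrary $D$ and an arbitrary PI-presentation simultaneously.
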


\begin{proof}
(Step 1) Let us work in the category $\mathsf{LCA}_{\mathfrak{A}}$. First of
all, we show that it suffices to handle the case where $V^{\prime\prime}=0$
and $P^{\prime\prime}\in P_{f}(\mathfrak{A})$. Consider%
\[
X^{\prime}\hookrightarrow V\oplus P\twoheadrightarrow V^{\prime\prime}\oplus
P^{\prime\prime}\text{.}%
\]
Note that $V^{\prime\prime}$ is a projective object in $\mathsf{LCA}%
_{\mathfrak{A}}$. Hence, there is a section $g:V^{\prime\prime}\hookrightarrow
V\oplus P$ to the epic, and since $V^{\prime\prime}$ is connected, the image
of $g$ must lie in $V$. We split off this direct summand, giving%
\begin{equation}
X^{\prime}\hookrightarrow V\oplus P\twoheadrightarrow P^{\prime\prime
}\label{lmf5}%
\end{equation}
after having changed the definition of $V$. Next, $P^{\prime\prime}$ is
projective, so we get a section $h:P^{\prime\prime}\hookrightarrow V\oplus P$.
The intersection $V\cap h(P^{\prime\prime})$ must be a discrete finitely
generated $\mathfrak{A}$-module (we refer to Equation \ref{laa3} for a
completely analogous construction, where we give a detailed argument). Thus,
by Lemma \ref{lemma_FinGenSubmodulesLieInFigGenProjectiveSubmodule} and since
$P^{\prime\prime}\in P_{\oplus}(\mathfrak{A})$ we can find a direct sum
splitting $P^{\prime\prime}\cong P_{0}^{\prime\prime}\oplus P_{\infty}%
^{\prime\prime}$ such that $h(P_{\infty}^{\prime\prime})$ lies entirely in $P$
and $P_{0}^{\prime\prime}\in P_{f}(\mathfrak{A})$. Thus, Sequence \ref{lmf5}
becomes%
\[
X^{\prime}\hookrightarrow V\oplus P\twoheadrightarrow P_{0}^{\prime\prime
}\oplus P_{\infty}^{\prime\prime}%
\]
and $h\mid_{P_{\infty}^{\prime\prime}}$ is a section for $P_{\infty}%
^{\prime\prime}$, giving%
\[
X^{\prime}\hookrightarrow V\oplus P_{0}\oplus P_{\infty}^{\prime\prime
}\twoheadrightarrow P_{0}^{\prime\prime}\oplus P_{\infty}^{\prime\prime
}\text{,}%
\]
(where $P_{0}$ denotes a complement of the image of the section) and after we
split off the summand $P_{\infty}^{\prime\prime}$, we obtain $X^{\prime
}\hookrightarrow V\oplus P_{0}\twoheadrightarrow P_{0}^{\prime\prime}$ with
$P_{0}^{\prime\prime}\in P_{f}(\mathfrak{A})$. It follows that if we prove the
claim of the lemma for this special case, it implies the general
case.\newline(Step 2) Since the underlying LCA group of $V\oplus P_{0}$ has
the shape $\mathbb{R}^{n}\oplus($discrete$)$, the closed subgroup $X^{\prime}$
must also have the shape $\mathbb{R}^{\ell}\oplus($discrete$)$ by
\cite[Corollary 2 to Theorem 7 and Remark]{MR0442141}. This direct sum
splitting on the level of LCA groups lifts to a direct sum splitting in
$\mathsf{LCA}_{\mathfrak{A}}$ by \cite[Lemma 6.1, (1)]{etnclca}, so we can
write%
\begin{equation}
X^{\prime}\cong V^{\prime}\oplus D^{\prime}\label{lmfs4}%
\end{equation}
with $V^{\prime}$ a vector $\mathfrak{A}$-module and $D^{\prime}$ discrete in
the category $\mathsf{LCA}_{\mathfrak{A}}$. Next, we apply Proposition
\ref{prop_PLCAStructureThm} to $X^{\prime}$, giving a further direct sum
decomposition $X^{\prime}\simeq P_{\infty}^{\prime}\oplus I_{\infty}^{\prime
}\oplus B^{\prime}$. We note that $I_{\infty}^{\prime}$ is compact connected
by Lemma \ref{lemma_ObjectsInIProdCompactConnected}, but by Equation
\ref{lmfs4} $X^{\prime}$ has no non-trivial compact connected subgroup at all,
so we must have $I_{\infty}^{\prime}=0$. Hence, $X^{\prime}\simeq P_{\infty
}^{\prime}\oplus B^{\prime}$. Since $P_{\infty}^{\prime}\in\mathsf{PLCA}%
_{\mathfrak{A},\mathbb{R}D}$, we conclude that the lemma is proven if we can
prove $B^{\prime}\in\mathsf{PLCA}_{\mathfrak{A},\mathbb{R}D}$.\newline(Step 3)
Thus, we may prove the claim of the lemma in the special case where
$X^{\prime}$ has a PI-presentation $P^{\prime}\hookrightarrow X^{\prime
}\twoheadrightarrow I^{\prime}$ with $P^{\prime}\in P_{f}(\mathfrak{A})$ and
$I^{\prime\vee}\in P_{f}(\mathfrak{A}^{op})$. In the isomorphism $X^{\prime
}\cong V^{\prime}\oplus D^{\prime}$ of Equation \ref{lmfs4} this implies that
$D^{\prime}$ must be a finitely generated $\mathfrak{A}$-module. Then our
sequence reads (thanks to the simplification in Step 1)%
\begin{equation}
V^{\prime}\oplus D^{\prime}\hookrightarrow V\oplus P\twoheadrightarrow
P^{\prime\prime}\label{lmf5c}%
\end{equation}
with $P^{\prime\prime}\in P_{f}(\mathfrak{A})$. We get an admissible
filtration $V^{\prime}\hookrightarrow V^{\prime}\oplus D^{\prime
}\hookrightarrow V\oplus P$ and Noether's Lemma yields the exact sequence%
\[
D^{\prime}\hookrightarrow\frac{V\oplus P}{V^{\prime}}\twoheadrightarrow
\frac{V\oplus P}{V^{\prime}\oplus D^{\prime}}%
\]
in $\mathsf{LCA}_{\mathfrak{A}}$. We note that the term on the right is
$P^{\prime\prime}$ in view of Equation \ref{lmf5c}. Moreover, the image of the
connected $V^{\prime}$ inside $V\oplus P$ will again be connected, so it must
lie in $V$. Thus, we get the exact sequence%
\[
D^{\prime}\hookrightarrow\frac{V}{V^{\prime}}\oplus P\twoheadrightarrow
P^{\prime\prime}\text{.}%
\]
Since $D^{\prime}$ and $P^{\prime\prime}$ are discrete, so must be the group
in the middle. This forces $V/V^{\prime}=0$. We get $D^{\prime}\hookrightarrow
P\twoheadrightarrow P^{\prime\prime}$. As both $P^{\prime\prime}$ and
$D^{\prime}$ are finitely generated $\mathfrak{A}$-modules, so must be $P$,
i.e. $P\in P_{f}(\mathfrak{A})$. Since $P^{\prime\prime}$ is projective, the
sequence must split, i.e. $P\cong D^{\prime}\oplus P^{\prime\prime}$. Since
$P\in P_{f}(\mathfrak{A})$ and this category is idempotent complete, we deduce
that $D^{\prime}\in P_{f}(\mathfrak{A})$. Since $X^{\prime}\cong V^{\prime
}\oplus D^{\prime}$ this implies $X^{\prime}\in\mathsf{PLCA}_{\mathfrak{A}%
,\mathbb{R}D}$ as desired.
\end{proof}

\begin{remark}
The intermediate reduction to finitely generated modules in the proof was
necessary because we used idempotent completeness and this holds for
$P_{f}(\mathfrak{A})$, but not necessarily for $P_{\oplus}(\mathfrak{A})$.
\end{remark}

\begin{lemma}
\label{lemma_C1}Suppose $\mathfrak{A}\subset A$ is a Gorenstein order. Suppose
$X\in\mathsf{PLCA}_{\mathfrak{A}}$ has a PI-presentation%
\[
P\hookrightarrow X\twoheadrightarrow I
\]
with $P\in P_{f}(\mathfrak{A})$ and $I^{\vee}\in P_{f}(\mathfrak{A}^{op})$.
Then there exists a projective resolution%
\[
P_{1}^{\prime}\hookrightarrow P_{0}^{\prime}\twoheadrightarrow X
\]
with $P_{1}^{\prime},P_{0}^{\prime}\in\mathsf{PLCA}_{\mathfrak{A},\mathbb{R}%
D}$.
\end{lemma}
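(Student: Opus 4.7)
The plan is to build a projective resolution of $X$ by the horseshoe construction applied to the given PI-presentation $P\hookrightarrow X\twoheadrightarrow I$, using that both the ends admit projective resolutions within $\mathsf{PLCA}_{\mathfrak{A},\mathbb{R}D}$. The left end $P\in P_{f}(\mathfrak{A})$ is itself projective (as a discrete finitely generated projective module it is projective in $\mathsf{LCA}_{\mathfrak{A}}$ by \cite[Proposition 8.1]{etnclca}, and hence in $\mathsf{PLCA}_{\mathfrak{A}}$ by Lemma \ref{lemma_ProjectivesStayProjectiveInCPI}), so the only nontrivial task is to resolve the right end $I$ by objects of $\mathsf{PLCA}_{\mathfrak{A},\mathbb{R}D}$.

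First I would produce a projective resolution of $I$ of length one by dualizing. Since $\mathfrak{A}^{op}$ is again a Gorenstein order, Proposition \ref{Prop_ExistResolutionsInFiniteTypeCase} applied to $I^{\vee}\in P_{f}(\mathfrak{A}^{op})$ yields the PI-presentation
\[
I^{\vee}\hookrightarrow (I^{\vee})_{\mathbb{R}}\twoheadrightarrow (I^{\vee})_{\mathbb{R}}/I^{\vee}
\]
in $\mathsf{PLCA}_{\mathfrak{A}^{op}}$, with $(I^{\vee})_{\mathbb{R}}/I^{\vee}\in I_{\Pi}(\mathfrak{A}^{op})$. Apply Pontryagin duality (Proposition \ref{prop_PontryaginDualityOnPLCA}). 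Setting $P'':=((I^{\vee})_{\mathbb{R}}/I^{\vee})^{\vee}$ and $V'':=((I^{\vee})_{\mathbb{R}})^{\vee}$, we obtain an exact sequence
\[
P''\hookrightarrow V''\twoheadrightarrow I
\]
in $\mathsf{PLCA}_{\mathfrak{A}}$, where $P''\in P_{\oplus}(\mathfrak{A})$ by definition of $I_{\Pi}$, and $V''$ is a vector right $\mathfrak{A}$-module (the Pontryagin dual of a vector module is again a vector module). Both $P''$ and $V''$ lie in $\mathsf{PLCA}_{\mathfrak{A},\mathbb{R}D}$ and are projective objects of $\mathsf{PLCA}_{\mathfrak{A}}$.

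Next I would splice this resolution of $I$ with the identity resolution of $P$. Since $V''$ is projective and $X\twoheadrightarrow I$ is an admissible epic, the composition $V''\twoheadrightarrow I$ lifts to a morphism $V''\rightarrow X$. Form the pullback $E:=V''\times_{I}X$ in the quasi-abelian category $\mathsf{LCA}_{\mathfrak{A}}$; because admissible epics are stable under pullback, we obtain admissible epics $E\twoheadrightarrow V''$ and $E\twoheadrightarrow X$, with respective kernels $P$ and $P''$. In particular we have the extension
\[
P\hookrightarrow E\twoheadrightarrow V'',
\]
which splits since $V''$ is projective, yielding $E\cong P\oplus V''$. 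Thus $E\in\mathsf{PLCA}_{\mathfrak{A},\mathbb{R}D}$ (as a direct sum of a finitely generated discrete projective and a vector module), and the induced exact sequence
\[
P''\hookrightarrow P\oplus V''\twoheadrightarrow X
\]
is the desired projective resolution, because both $P''\in P_{\oplus}(\mathfrak{A})\subseteq\mathsf{PLCA}_{\mathfrak{A},\mathbb{R}D}$ and $P\oplus V''\in\mathsf{PLCA}_{\mathfrak{A},\mathbb{R}D}$ are projective in $\mathsf{PLCA}_{\mathfrak{A}}$.

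The main obstacle I foresee is the horseshoe step: producing the resolution of $X$ from resolutions of $P$ and $I$ while staying inside $\mathsf{PLCA}_{\mathfrak{A}}$ and the subcategory $\mathsf{PLCA}_{\mathfrak{A},\mathbb{R}D}$. The pullback-and-split argument above is tailored to this; the fact that $V''$ is simultaneously projective in $\mathsf{PLCA}_{\mathfrak{A}}$ \emph{and} a vector module is what forces the middle term of the resolution to be of the allowed form $P\oplus V''$ rather than some more exotic extension. Apart from this step, every ingredient is supplied by earlier results in this section.
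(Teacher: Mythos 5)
Your proposal is correct and follows the same overall strategy as the paper: apply Proposition \ref{Prop_ExistResolutionsInFiniteTypeCase} to $I^{\vee}\in P_{f}(\mathfrak{A}^{op})$ and dualize to obtain a two-term resolution $P''\hookrightarrow V''\twoheadrightarrow I$ of $I$ by a vector module and a discrete projective, then splice this with the projectivity of $P$ to resolve $X$ by $P\oplus V''$. The one place where you diverge is in showing that $P\oplus V''\rightarrow X$ is an admissible epic: the paper lifts $V''\twoheadrightarrow I$ to $f:V''\rightarrow X$, forms the map $i+f$ by hand, and invokes Pontryagin's open mapping theorem (via $\sigma$-compactness of $\mathbb{Z}^{n}\oplus\mathbb{R}^{m}$) to see that this continuous surjection is open; you instead form the pullback $E:=V''\times_{I}X$ and use that admissible epics are stable under pullback in an exact category. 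The two constructions produce literally the same resolution, since a splitting of $E\twoheadrightarrow V''$ is precisely a lift $f$, under which $E\cong P\oplus V''$ maps to $X$ by $i+f$; your route just replaces the point-set-topological verification by a purely exact-categorical one, which is slightly cleaner. One cosmetic remark: since $I^{\vee}$ is finitely generated, $P''=((I^{\vee})_{\mathbb{R}}/I^{\vee})^{\vee}$ in fact lies in $P_{f}(\mathfrak{A})$ rather than merely in $P_{\oplus}(\mathfrak{A})$; this costs you nothing, but it matches the paper's assertion that the left term of the resolution of $I$ is finitely generated.
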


\begin{proof}
(Step 1) Since $I^{\vee}\in P_{f}(\mathfrak{A}^{op})$, we apply Proposition
\ref{Prop_ExistResolutionsInFiniteTypeCase} to get an injective resolution in
$\mathsf{PLCA}_{\mathfrak{A}^{op}}$. Under Pontryagin duality, this gives us a
projective resolution%
\[
P_{1}\hookrightarrow P_{0}\overset{q}{\twoheadrightarrow}I\text{,}%
\]
where $P_{0}$ is a vector right $\mathfrak{A}$-module and $P_{1}\in
P_{f}(\mathfrak{A})$. We consider the commutative diagram%
\begin{equation}%
\xymatrix{
&                & P_1 \ar@{^{(}->}[d] \\
									    &                & P_0 \ar@{->>}[d]^{q} \ar@{-->}[dl]_{f} \\
P \ar@{^{(}->}[r]^{i} & X \ar@{->>}[r] & I,
}%
\label{lmfs1}%
\end{equation}
where we obtain the lift $f$ by exploiting that $P_{0}$ is a projective
object. Now consider the morphism $i+f:P\oplus P_{0}\rightarrow X$. Since
$i,f$ are continuous, so is $i+f$. Moreover, the map is clearly surjective.
Next, since $P$ is finitely generated and $P_{0}$ a vector module, the
underlying LCA group of $P\oplus P_{0}$ is of the shape $\mathbb{Z}^{n}%
\oplus\mathbb{R}^{m}$ for suitable $n,m\geq0$, and thus $P\oplus P_{0}$ is
$\sigma$-compact. Thus, by\ Pontryagin's Open Mapping Theorem \cite[Theorem
3]{MR0442141} $i+f$ must be an open map. Hence, $i+f$ is an admissible epic in
$\mathsf{LCA}_{\mathfrak{A}}$. Let $K$ be its kernel in $\mathsf{LCA}%
_{\mathfrak{A}}$. Consider the commutative diagram%
\begin{equation}%
\xymatrix{
0 \ar@{^{(}->}[d] \ar@{^{(}->}[r] & P \ar@{^{(}->}[d] \ar@{->>}[r]^{1}
& P \ar@{^{(}->}[d]^{i} \\
K \ar@{->}[d]^{\sim} \ar@{^{(}->}[r] & P \oplus P_{0} \ar@{->>}[d] \ar@
{->>}[r]_{i+f} & X \ar@{->>}[d] \\
K \ar@{^{(}->}[r] & P_0 \ar@{->>}[r]_{\overline{i+f}} & I
}%
\label{lmfs2}%
\end{equation}
in $\mathsf{LCA}_{\mathfrak{A}}$. It can be constructed by first setting up
the top two rows, which obviously commute, and which then gives rise to the
bottom row by a na\"{\i}ve version of the snake lemma. We note that the
quotient map $\overline{i+f}$ agrees with $q$ because any $p\in P_{0}$ can be
lifted to $(0,p)$ in $P\oplus P_{0}$ and then the remaining arrows to $I$
agree with $q$ in Diagram \ref{lmfs1}. Thus, $K$ is a kernel for $q$, which
provides us with an isomorphism $K\cong P_{0}$. It follows that $K\in
\mathsf{PLCA}_{\mathfrak{A}}$. It follows that Diagram \ref{lmfs2} is actually
a diagram in the category $\mathsf{PLCA}_{\mathfrak{A}}$. Note that the middle
row now provides a projective resolution of $X$.
\end{proof}

Define the full subcategory of modules with \emph{no small subgroups},%
\[
\mathsf{PLCA}_{\mathfrak{A},nss}:=\mathsf{PLCA}_{\mathfrak{A}}\cap
\mathsf{LCA}_{\mathfrak{A},nss}\text{,}%
\]
much in the spirit of Equation \ref{l_cat_cg}. As Pontryagin duality exchanges
groups without small subgroups with compactly generated ones, we can also
define $\mathsf{PLCA}_{\mathfrak{A},nss}$ as the Pontryagin dual of the full
subcategory $\mathsf{PLCA}_{\mathfrak{A}^{op},cg}$ of $\mathsf{PLCA}%
_{\mathfrak{A}^{op}}$. In particular, it is clear that $\mathsf{PLCA}%
_{\mathfrak{A},nss}$ is a fully exact subcategory of $\mathsf{PLCA}%
_{\mathfrak{A}}$.

\begin{corollary}
\label{cor_C1_full}Suppose $\mathfrak{A}\subset A$ is a Gorenstein order.
Every object $X\in\mathsf{PLCA}_{\mathfrak{A},nss}$ has a projective
resolution%
\[
P_{1}^{\prime}\hookrightarrow P_{0}^{\prime}\twoheadrightarrow X
\]
with $P_{1}^{\prime},P_{0}^{\prime}\in\mathsf{PLCA}_{\mathfrak{A},\mathbb{R}%
D}$.
\end{corollary}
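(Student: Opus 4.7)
The plan is to reduce the general nss case to the finite-type situation already handled by Lemma~\ref{lemma_C1}, using the structure theorem Proposition~\ref{prop_PLCAStructureThm} as a splitting device.

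First I would apply Proposition~\ref{prop_PLCAStructureThm} to decompose $X \simeq P_\infty \oplus I_\infty \oplus B$ inside $\mathsf{PLCA}_{\mathfrak{A}}$, with $P_\infty \in P_\oplus(\mathfrak{A})$, $I_\infty^{\vee}\in P_\oplus(\mathfrak{A}^{op})$, and $B$ already equipped with a PI-presentation $P_0 \hookrightarrow B \twoheadrightarrow I_0$ where $P_0 \in P_f(\mathfrak{A})$ and $I_0^{\vee}\in P_f(\mathfrak{A}^{op})$. Since every symmetric neighbourhood of $0$ in $X$ containing no non-trivial subgroup restricts to such a neighbourhood on each summand, the nss property passes to each of $P_\infty$, $I_\infty$, and $B$.

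The key step is to upgrade $I_\infty$ into the finite-type range. As recalled just before the corollary, Pontryagin duality interchanges compactly generated LCA groups with those having no small subgroups. Hence $I_\infty$ being nss forces $I_\infty^{\vee}$ to be compactly generated; but the underlying LCA group of $I_\infty^{\vee}\in P_\oplus(\mathfrak{A}^{op})$ is a countable discrete direct sum of $\mathbb{Z}^{n_i}$'s, and such a sum is compactly generated exactly when the index set is finite. Therefore $I_\infty^{\vee}\in P_f(\mathfrak{A}^{op})$, and the tautological sequence $0\hookrightarrow I_\infty\xrightarrow{1}I_\infty$ is a PI-presentation of finite type for $I_\infty$. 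This nss-to-finite-type upgrade is the heart of the argument and the only step where anything beyond Lemma~\ref{lemma_C1} or Proposition~\ref{prop_PLCAStructureThm} is really needed; I expect it to be the main obstacle.

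With this reduction in place, I would build a short projective resolution in $\mathsf{PLCA}_{\mathfrak{A},\mathbb{R}D}$ for each summand and then take the direct sum. Lemma~\ref{lemma_C1} applies verbatim to $B$ and, thanks to the upgrade, also to $I_\infty$. For $P_\infty$, each finitely generated projective summand is projective in $\mathsf{LCA}_{\mathfrak{A}}$ by \cite[Proposition~8.1]{etnclca}, hence in $\mathsf{PLCA}_{\mathfrak{A}}$ by Lemma~\ref{lemma_ProjectivesStayProjectiveInCPI}; since the discrete countable direct sum is the coproduct in $\mathsf{PLCA}_{\mathfrak{A}}$, this propagates to $P_\infty$ itself, so the identity $0 \hookrightarrow P_\infty \xrightarrow{1} P_\infty$ is already a projective resolution with both terms in $\mathsf{PLCA}_{\mathfrak{A},\mathbb{R}D}$ (taking the vector part $V=0$). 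Summing the three resolutions yields the desired $P_1' \hookrightarrow P_0' \twoheadrightarrow X$ in $\mathsf{PLCA}_{\mathfrak{A},\mathbb{R}D}$.
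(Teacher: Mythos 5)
Your proposal is correct and follows essentially the same route as the paper: decompose $X\simeq P_{\infty}\oplus I_{\infty}\oplus B$ via Proposition \ref{prop_PLCAStructureThm}, observe that the nss condition forces $I_{\infty}^{\vee}\in P_{f}(\mathfrak{A}^{op})$ (the paper phrases this as $\prod_{i\in\mathcal{I}}\mathbb{T}$ being nss only for finite $\mathcal{I}$, which is the same fact as your duality argument), apply Lemma \ref{lemma_C1} to $B$ and $I_{\infty}$, and handle $P_{\infty}$ directly as a projective object of $\mathsf{PLCA}_{\mathfrak{A},\mathbb{R}D}$.
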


\begin{proof}
Use Proposition \ref{prop_PLCAStructureThm} to write $X$ as $X\simeq
P_{\infty}\oplus M\oplus I_{\infty}$ such that $M$ satisfies the conditions of
Lemma \ref{lemma_C1} (and therefore has a projective resolution as required).
Further, $P_{\infty}\in P_{\oplus}(\mathfrak{A})$ is projective and lies in
$\mathsf{PLCA}_{\mathfrak{A},\mathbb{R}D}$, so this also satisfies our claim.
Finally, the underlying LCA group of $I_{\infty}$ is $\prod_{i\in\mathcal{I}%
}\mathbb{T}$ for some index set, but this has no small subgroups if and only
if $\mathcal{I}$ is finite (\cite[Theorem 2.4]{MR0215016}). In that case, and
since we know that $I_{\infty}^{\vee}\in P_{f}(\mathfrak{A}^{op})$, it follows
that $I_{\infty}$ also satisfies the conditions of Lemma \ref{lemma_C1}.
\end{proof}

\begin{theorem}
\label{thm_IdentifyNoSmallSubgroupPart}Let $\mathfrak{A}\subset A$ be a
Gorenstein order. Let $\mathsf{A}$ be a stable $\infty$-category. Suppose
$K:\operatorname*{Cat}_{\infty}^{\operatorname*{ex}}\rightarrow\mathsf{A}$ is
a localizing invariant with values in $\mathsf{A}$. Then there is an
equivalence%
\[
K(A_{\mathbb{R}})\overset{\sim}{\longrightarrow}K(\mathsf{PLCA}_{\mathfrak{A}%
,nss})\text{,}%
\]
induced from the exact functor sending a right $A_{\mathbb{R}}$-module to
itself, equipped with the real vector space topology.
\end{theorem}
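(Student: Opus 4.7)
The plan is to exhibit the equivalence as a composition
\[
K(A_{\mathbb{R}})\;\xrightarrow{\sim}\;K(\mathsf{PLCA}_{\mathfrak{A},\mathbb{R}})\;\xrightarrow{\sim}\;K(\mathsf{PLCA}_{\mathfrak{A},\mathbb{R}D})\;\xrightarrow{\sim}\;K(\mathsf{PLCA}_{\mathfrak{A},nss}),
\]
each map of which is induced by the obvious fully exact inclusion and hence compatible with the ``equip with real topology'' functor of the statement. I would then check that the composite represents the functor named in the theorem.

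For the first equivalence, note that $K(A_{\mathbb{R}})\simeq K(P_{f}(A_{\mathbb{R}}))$ by Morita invariance, and Lemma~\ref{lemma_IdentifyPLCAVectorIC} identifies $P_{f}(A_{\mathbb{R}})$ with $\mathsf{PLCA}_{\mathfrak{A},\mathbb{R}}^{ic}$. Since a localizing invariant on an exact category factors through its bounded derived $\infty$-category (see \cite{MR3070515}) and the latter is insensitive to idempotent completion, $K(\mathsf{PLCA}_{\mathfrak{A},\mathbb{R}}^{ic})\simeq K(\mathsf{PLCA}_{\mathfrak{A},\mathbb{R}})$.

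For the second equivalence, I would combine Lemmas~\ref{lemma_E1} and~\ref{lemma_E2}: left $s$-filteringness yields a fiber sequence
\[
K(\mathsf{PLCA}_{\mathfrak{A},\mathbb{R}})\longrightarrow K(\mathsf{PLCA}_{\mathfrak{A},\mathbb{R}D})\longrightarrow K(P_{\oplus}(\mathfrak{A})),
\]
exactly as in the proof of Proposition~\ref{prop_BasicSeqWithCG}. Because $P_{\oplus}(\mathfrak{A})$ is closed under countable direct sums, the Eilenberg swindle forces $K(P_{\oplus}(\mathfrak{A}))=0$, and the middle arrow is the desired equivalence.

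For the third equivalence I would appeal to the resolution theorem for localizing invariants applied to the fully exact inclusion $\mathsf{PLCA}_{\mathfrak{A},\mathbb{R}D}\hookrightarrow\mathsf{PLCA}_{\mathfrak{A},nss}$. Corollary~\ref{cor_C1_full} provides, for every object of the larger category, a length-one projective resolution whose terms lie in $\mathsf{PLCA}_{\mathfrak{A},\mathbb{R}D}$. Lemma~\ref{lemma_C2Strong} supplies the requisite closure property: whenever $V\oplus P\twoheadrightarrow V''\oplus P''$ is an admissible epic between objects of $\mathsf{PLCA}_{\mathfrak{A},\mathbb{R}D}$ in $\mathsf{PLCA}_{\mathfrak{A}}$, its kernel again lies in $\mathsf{PLCA}_{\mathfrak{A},\mathbb{R}D}$. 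Together with extension-closedness (already recorded), these are the hypotheses under which the inclusion induces an equivalence on bounded derived $\infty$-categories, and hence on every localizing invariant.

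The main obstacle will be the third step. Precisely, one must verify that Lemma~\ref{lemma_C2Strong}, as phrased, suffices to apply the resolution theorem inside the subcategory $\mathsf{PLCA}_{\mathfrak{A},nss}$ rather than in all of $\mathsf{PLCA}_{\mathfrak{A}}$; this amounts to checking that admissible monics and epics in $\mathsf{PLCA}_{\mathfrak{A},nss}$ coincide with those inherited from $\mathsf{PLCA}_{\mathfrak{A}}$, and that the intermediate kernels produced when iterating Corollary~\ref{cor_C1_full} remain in $\mathsf{PLCA}_{\mathfrak{A},\mathbb{R}D}$. Once this bookkeeping is done, the theorem follows by concatenating the three equivalences and observing that the composite is induced by the inclusion $P_{f}(A_{\mathbb{R}})\hookrightarrow\mathsf{PLCA}_{\mathfrak{A},nss}$ on the strict image, which is precisely the functor described in the statement.
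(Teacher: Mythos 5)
Your proposal matches the paper's proof essentially step for step: the paper also factors the equivalence through $K(\mathsf{PLCA}_{\mathfrak{A},\mathbb{R}})$ and $K(\mathsf{PLCA}_{\mathfrak{A},\mathbb{R}D})$, using Lemma \ref{lemma_IdentifyPLCAVectorIC} plus idempotent-completion invariance for the first step, the localization sequence from Lemmas \ref{lemma_E1} and \ref{lemma_E2} together with the Eilenberg swindle for the second, and Keller's resolution/derived-equivalence theorem (with axioms C1 and C2 supplied by Corollary \ref{cor_C1_full} and Lemma \ref{lemma_C2Strong}) for the third. The bookkeeping you flag in the last step is handled in the paper simply by noting that $\mathsf{PLCA}_{\mathfrak{A},nss}$ is a fully exact subcategory, so its admissible monics and epics are inherited; your argument is correct as written.
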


\begin{proof}
(Step 1) First of all, we show that the inclusion of the fully exact
subcategory $\mathsf{PLCA}_{\mathfrak{A},\mathbb{R}D}\hookrightarrow
\mathsf{PLCA}_{\mathfrak{A},nss}$ induces an equivalence $K(\mathsf{PLCA}%
_{\mathfrak{A},\mathbb{R}D})\overset{\sim}{\longrightarrow}K(\mathsf{PLCA}%
_{\mathfrak{A},nss})$, because this exact functor induces a derived
equivalence \cite[\S 12, Theorem 12.1]{MR1421815}. The assumptions of the
cited theorem are met, because the inclusion functor satisfies (the
categorical opposite of) the axiom C1 by Corollary \ref{cor_C1_full}. Further,
it satisfies the stronger condition implying C2 by Lemma \ref{lemma_C2Strong}.
(Step 2) Next, by Lemma \ref{lemma_E1} and \ref{lemma_E2} we have the
localization fiber sequence%
\[
K(\mathsf{PLCA}_{\mathfrak{A},\mathbb{R}})\longrightarrow K(\mathsf{PLCA}%
_{\mathfrak{A},\mathbb{R}D})\longrightarrow K(P_{\oplus}(\mathfrak{A}%
))\text{,}%
\]
where $K(P_{\oplus}(\mathfrak{A}))=0$ since $P_{\oplus}(\mathfrak{A})$ is
closed under countable coproducts and we may thus apply the Eilenberg swindle.
Next, since $K$ is localizing, it is invariant under going to idempotent
completion, so the exact equivalence of exact categories $P_{f}(A_{\mathbb{R}%
})\overset{\sim}{\longrightarrow}\mathsf{PLCA}_{\mathfrak{A},\mathbb{R}}^{ic}$
of Lemma \ref{lemma_IdentifyPLCAVectorIC} induces an equivalence%
\[
K(A_{\mathbb{R}})\overset{\sim}{\longrightarrow}K(\mathsf{PLCA}_{\mathfrak{A}%
,\mathbb{R}}^{ic})\overset{\sim}{\longrightarrow}K(\mathsf{PLCA}%
_{\mathfrak{A},\mathbb{R}})
\]
in $\mathsf{A}$. Combine both results and check that the equivalence is indeed
induced by the functor claimed.
\end{proof}

\begin{theorem}
\label{thm_IdentifyCGPart}Let $\mathfrak{A}\subset A$ be a Gorenstein order.
Let $\mathsf{A}$ be a stable $\infty$-category. Suppose $K:\operatorname*{Cat}%
_{\infty}^{\operatorname*{ex}}\rightarrow\mathsf{A}$ is a localizing invariant
with values in $\mathsf{A}$. Then there is an equivalence%
\[
K(A_{\mathbb{R}})\overset{\sim}{\longrightarrow}K(\mathsf{PLCA}_{\mathfrak{A}%
,cg})\text{,}%
\]
induced from the exact functor sending a right $A_{\mathbb{R}}$-module to
itself, equipped with the real vector space topology.
\end{theorem}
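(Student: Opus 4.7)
The plan is to reduce the statement to the preceding Theorem \ref{thm_IdentifyNoSmallSubgroupPart} by applying Pontryagin duality. By Proposition \ref{prop_PontryaginDualityOnPLCA}, the Pontryagin duality functor induces an exact equivalence $\mathsf{PLCA}_{\mathfrak{A}}^{op}\overset{\sim}{\longrightarrow}\mathsf{PLCA}_{\mathfrak{A}^{op}}$. Since Pontryagin duality on LCA groups interchanges compactly generated groups with groups of no small subgroups (as already exploited by the author in defining $\mathsf{PLCA}_{\mathfrak{A},nss}$ as the dual of $\mathsf{PLCA}_{\mathfrak{A}^{op},cg}$), this restricts to an exact equivalence
\[
\mathsf{PLCA}_{\mathfrak{A},cg}^{op}\overset{\sim}{\longrightarrow}\mathsf{PLCA}_{\mathfrak{A}^{op},nss}.
\]
By condition (4) of Definition \ref{def_Gorenstein}, the opposite order $\mathfrak{A}^{op}\subset A^{op}$ is again Gorenstein, so Theorem \ref{thm_IdentifyNoSmallSubgroupPart} applies to it.

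Applying that theorem to $\mathfrak{A}^{op}$ yields an equivalence $K((A^{op})_{\mathbb{R}})\overset{\sim}{\longrightarrow}K(\mathsf{PLCA}_{\mathfrak{A}^{op},nss})$. I then combine this with the canonical op-invariance of every localizing invariant, i.e.\ the natural equivalence $K(\mathsf{C})\simeq K(\mathsf{C}^{op})$ on $\operatorname{Cat}_{\infty}^{\operatorname{ex}}$, which is classical for algebraic $K$-theory and holds for any localizing invariant. Under this, the right-hand side becomes $K((\mathsf{PLCA}_{\mathfrak{A},cg})^{op})\simeq K(\mathsf{PLCA}_{\mathfrak{A},cg})$, and on the left, the identification $(A^{op})_{\mathbb{R}}=(A_{\mathbb{R}})^{op}$ gives $K((A^{op})_{\mathbb{R}})\simeq K(A_{\mathbb{R}})$. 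Stringing these equivalences together produces the equivalence claimed in the theorem.

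The main obstacle, and really the only piece that requires genuine care, is verifying that the resulting equivalence is actually induced by the functor described in the statement, namely $V\mapsto V$ with its real vector space topology. Tracking through the construction, this reduces to a naturality check: the reflexivity isomorphism $\eta$ of Proposition \ref{prop_LCACat} applied to vector modules must be compatible with the functor used by Theorem \ref{thm_IdentifyNoSmallSubgroupPart} for $\mathfrak{A}^{op}$ (which sends a right $(A^{op})_{\mathbb{R}}$-module, equivalently a left $A_{\mathbb{R}}$-module, to itself with the real topology). Since both operations act as the identity on underlying real vector spaces and only shuffle left/right module structures under the involution $(-)^{op}$ on rings, this is a purely formal verification, entirely analogous to the ``check that the equivalence is indeed induced by the functor claimed'' appearing at the end of the proof of Theorem \ref{thm_IdentifyNoSmallSubgroupPart}.
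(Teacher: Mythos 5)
Your overall strategy --- reduce to Theorem \ref{thm_IdentifyNoSmallSubgroupPart} for $\mathfrak{A}^{op}$ via Pontryagin duality, using that the dual of $\mathsf{PLCA}_{\mathfrak{A},cg}$ is $\mathsf{PLCA}_{\mathfrak{A}^{op},nss}$ and that opposites of Gorenstein orders are Gorenstein --- is exactly the paper's. The one step that does not hold up as written is the appeal to ``the canonical op-invariance of every localizing invariant,'' i.e.\ a natural equivalence $K(\mathsf{C})\simeq K(\mathsf{C}^{op})$. This is a theorem for algebraic $K$-theory (classical, via the symmetry of the $S_{\bullet}$-construction), but it is \emph{not} an axiom of localizing invariants in the sense of \cite{MR3070515} and does not follow formally from them: $(-)^{op}$ is a nontrivial involution of $\operatorname*{Cat}_{\infty}^{\operatorname*{ex}}$, and there is no a priori identification of $K$ with $K\circ(-)^{op}$. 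Since the theorem is asserted for an \emph{arbitrary} localizing invariant $K$, this step is a genuine gap.

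The paper's proof is arranged precisely so as never to need such a symmetry of $K$. What \emph{is} formally true is that $K\circ(-)^{op}$ is again a localizing invariant (since $(-)^{op}$ preserves exact sequences of exact categories and idempotent completion). Applying Theorem \ref{thm_IdentifyNoSmallSubgroupPart} for $\mathfrak{A}^{op}$ to the invariant $K\circ(-)^{op}$ yields $K\bigl(P_{f}(A_{\mathbb{R}}^{op})^{op}\bigr)\simeq K\bigl(\mathsf{PLCA}_{\mathfrak{A}^{op},nss}^{op}\bigr)$. The right-hand side is then identified with $K(\mathsf{PLCA}_{\mathfrak{A},cg})$ by the honest exact equivalence of categories $\mathsf{PLCA}_{\mathfrak{A},cg}\overset{\sim}{\longrightarrow}\mathsf{PLCA}_{\mathfrak{A}^{op},nss}^{op}$ furnished by Pontryagin duality, and the left-hand side with $K(P_{f}(A_{\mathbb{R}}))=K(A_{\mathbb{R}})$ by the honest exact equivalence $P_{f}(R^{op})\overset{\sim}{\longrightarrow}P_{f}(R)^{op}$, $P\mapsto\operatorname*{Hom}_{R}(P,R)$, applied to $R:=A_{\mathbb{R}}$. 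In other words, every ``op'' is absorbed into an equivalence of exact categories (to which any localizing invariant may be applied) rather than into a symmetry property of $K$ itself. With that substitution your argument, including the concluding naturality check, goes through and coincides with the paper's.
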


\begin{proof}
Pontryagin duality is an exact functor exchanging the full subcategories of
compactly generated modules with those without small subgroups. Thus,
Proposition \ref{prop_PontryaginDualityOnPLCA} restricts to an exact
equivalence of exact categories $\mathsf{PLCA}_{\mathfrak{A},cg}\overset{\sim
}{\rightarrow}\mathsf{PLCA}_{\mathfrak{A}^{op},nss}^{op}$. Along with Theorem
\ref{thm_IdentifyNoSmallSubgroupPart} applied to $\mathfrak{A}^{op}$, we get
the two equivalences
\[
K(\mathsf{PLCA}_{\mathfrak{A},cg})\overset{\sim}{\longrightarrow
}K(\mathsf{PLCA}_{\mathfrak{A}^{op},nss}^{op})\overset{\sim}{\longrightarrow
}K(P_{f}(A_{\mathbb{R}}^{op})^{op})\text{.}%
\]
Note that if $\mathfrak{A}\subset A$ is a Gorenstein order in a semisimple
algebra, so is its opposite $\mathfrak{A}^{op}\subset A^{op}$, see Definition
\ref{def_Gorenstein}, so using Theorem \ref{thm_IdentifyNoSmallSubgroupPart}
was legitimate.\ Next, for any ring $R$ the functor $P\mapsto
\operatorname*{Hom}_{R}(P,R)$ induces an exact equivalence $P_{f}%
(R^{op})\overset{\sim}{\longrightarrow}P_{f}(R)^{op}$, relating the opposite
ring with the opposite category. Applied to $R:=A_{\mathbb{R}}$ this yields
$K(P_{f}(A_{\mathbb{R}}^{op})^{op})\overset{\sim}{\longrightarrow}%
K(P_{f}(A_{\mathbb{R}}))$. This proves our claim.
\end{proof}

\section{Main theorems}

We may now collect all our results to obtain a locally compact topological
analogue of the relative $K$-group appearing in the Burns--Flach formulation
of the ETNC with non-commutative coefficients \cite{MR1884523}.

\begin{theorem}
\label{thm_Sequence1}Suppose $A$ is a finite-dimensional semisimple
$\mathbb{Q}$-algebra and let $\mathfrak{A}\subset A$ be a Gorenstein order.
Let $\mathsf{A}$ be a stable $\infty$-category. Suppose $K:\operatorname*{Cat}%
_{\infty}^{\operatorname*{ex}}\rightarrow\mathsf{A}$ is a localizing invariant
with values in $\mathsf{A}$. Then there is a fiber sequence%
\[
K(\mathfrak{A})\longrightarrow K(A_{\mathbb{R}})\longrightarrow
K(\mathsf{PLCA}_{\mathfrak{A}})
\]
in $\mathsf{A}$. If $\mathfrak{A}$ is regular, there is a morphism of fiber
sequences to the one of \cite[Theorem 11.2]{etnclca}%
\[
K(\mathsf{Mod}_{\mathfrak{A},fg})\longrightarrow K(A_{\mathbb{R}%
})\longrightarrow K(\mathsf{LCA}_{\mathfrak{A}})\text{,}%
\]
coming from the inclusion $P_{f}(\mathfrak{A})\subseteq\mathsf{Mod}%
_{\mathfrak{A},fg}$ and $\mathsf{PLCA}_{\mathfrak{A}}\subseteq\mathsf{LCA}%
_{\mathfrak{A}}$. This morphism is an equivalence of fiber sequences.
\end{theorem}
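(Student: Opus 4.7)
The plan is to assemble the fiber sequence from two ingredients already in hand: Proposition \ref{prop_BasicSeqWithCG}(1), which supplies the fiber sequence $K(\mathfrak{A}) \to K(\mathsf{PLCA}_{\mathfrak{A},cg}) \to K(\mathsf{PLCA}_{\mathfrak{A}})$, and Theorem \ref{thm_IdentifyCGPart}, which supplies the equivalence $K(A_{\mathbb{R}}) \xrightarrow{\sim} K(\mathsf{PLCA}_{\mathfrak{A},cg})$ induced by the exact functor sending a vector right $A_{\mathbb{R}}$-module to itself with the real topology. Substituting $K(A_{\mathbb{R}})$ for the middle term produces the asserted fiber sequence $K(\mathfrak{A}) \to K(A_{\mathbb{R}}) \to K(\mathsf{PLCA}_{\mathfrak{A}})$.

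For the comparison statement I would begin with the morphism of fiber sequences already produced by Proposition \ref{prop_BasicSeqWithCG}(2), which maps the above top sequence to $K(\mathsf{Mod}_{\mathfrak{A},fg}) \to K(\mathsf{LCA}_{\mathfrak{A},cg}) \to K(\mathsf{LCA}_{\mathfrak{A}})$ through the fully exact inclusions $P_f(\mathfrak{A}) \subseteq \mathsf{Mod}_{\mathfrak{A},fg}$, $\mathsf{PLCA}_{\mathfrak{A},cg} \subseteq \mathsf{LCA}_{\mathfrak{A},cg}$, and $\mathsf{PLCA}_{\mathfrak{A}} \subseteq \mathsf{LCA}_{\mathfrak{A}}$. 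Replacing both middle terms by $K(A_{\mathbb{R}})$ --- via Theorem \ref{thm_IdentifyCGPart} on top and its $\mathsf{LCA}$-counterpart from \cite[Theorem 11.2]{etnclca} on the bottom --- turns this into a morphism of fiber sequences between our new sequence and the one of \cite[Theorem 11.2]{etnclca}. The only nontrivial compatibility to verify is that the middle vertical becomes the identity on $K(A_{\mathbb{R}})$; this holds because both equivalences are induced by the same functor from $P_f(A_{\mathbb{R}})$ sending $V$ to $V$ with its real topology, and the inclusion $\mathsf{PLCA}_{\mathfrak{A},cg} \hookrightarrow \mathsf{LCA}_{\mathfrak{A},cg}$ acts as the identity on the objects in the image of this functor.

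For the regular case I would invoke the two-out-of-three principle for morphisms of fiber sequences in a stable $\infty$-category: if two of the three vertical maps are equivalences, so is the third. The middle vertical is an equivalence by the previous paragraph. For the left vertical $K(\mathfrak{A}) \to K(\mathsf{Mod}_{\mathfrak{A},fg})$, regularity of $\mathfrak{A}$ means every finitely generated $\mathfrak{A}$-module admits a finite resolution by finitely generated projectives, so Quillen's resolution theorem (valid for any localizing invariant) makes this inclusion-induced map an equivalence. Hence the right vertical $K(\mathsf{PLCA}_{\mathfrak{A}}) \to K(\mathsf{LCA}_{\mathfrak{A}})$ is an equivalence too, completing the proof. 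The main obstacle in this plan is the middle-vertical compatibility above; it is essentially tautological, but must be spelled out carefully using the functoriality built into Proposition \ref{prop_BasicSeqWithCG} and Theorem \ref{thm_IdentifyCGPart}, so as to exhibit genuine equivalences of morphisms of fiber sequences rather than merely a termwise coincidence.
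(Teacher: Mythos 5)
Your proposal is correct and follows essentially the same route as the paper, whose proof is precisely ``use Proposition \ref{prop_BasicSeqWithCG} and Theorem \ref{thm_IdentifyCGPart} and unravel the maps'': you splice the equivalence $K(A_{\mathbb{R}})\simeq K(\mathsf{PLCA}_{\mathfrak{A},cg})$ into the localization fiber sequence, transport the morphism of fiber sequences from Proposition \ref{prop_BasicSeqWithCG}(2), and settle the regular case by two-out-of-three plus the resolution theorem. The extra care you take with the middle-vertical compatibility and the regularity argument is exactly the content the paper compresses into ``unravelling the maps.''
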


\begin{proof}
Use Proposition \ref{prop_BasicSeqWithCG} and Theorem \ref{thm_IdentifyCGPart}%
. Unravelling the maps gives all the claims about the compatibility with
\cite{etnclca}.
\end{proof}

Finally, we may apply this to usual algebraic $K$-theory.

\begin{definition}
Suppose $A$ is a finite-dimensional semisimple $\mathbb{Q}$-algebra and let
$\mathfrak{A}\subset A$ be an order. Define%
\[
\mathsf{LCA}_{\mathfrak{A}}^{\ast}:=\mathsf{PLCA}_{\mathfrak{A}}^{ic}\text{,}%
\]
i.e. as the idempotent completion of $\mathsf{PLCA}_{\mathfrak{A}}$.
\end{definition}

\begin{theorem}
\label{thm_OrdinaryKGroupsLongExactSequence}Suppose $A$ is a
finite-dimensional semisimple $\mathbb{Q}$-algebra and let $\mathfrak{A}%
\subset A$ be a Gorenstein order. There is a long exact sequence of algebraic
$K$-groups%
\[
\cdots\rightarrow K_{n}(\mathfrak{A})\rightarrow K_{n}(A_{\mathbb{R}%
})\rightarrow K_{n}(\mathsf{LCA}_{\mathfrak{A}}^{\ast})\rightarrow
K_{n-1}(\mathfrak{A})\rightarrow\cdots
\]
for positive $n$, ending in%
\[
\cdots\rightarrow K_{0}(\mathfrak{A})\rightarrow K_{0}(A_{\mathbb{R}%
})\rightarrow K_{0}(\mathsf{LCA}_{\mathfrak{A}}^{\ast})\rightarrow
\mathbb{K}_{-1}(\mathfrak{A})\rightarrow0\text{.}%
\]
Here $\mathbb{K}_{-1}$ denotes non-connective $K$-theory. Classically, these
groups are simply called the \textquotedblleft negative $K$%
-groups\textquotedblright. Moreover,%
\[
\mathbb{K}_{n}(\mathsf{LCA}_{\mathfrak{A}}^{\ast})\cong\mathbb{K}%
_{n-1}(\mathfrak{A})
\]
for all $n\leq-1$. If $\mathfrak{A}$ is additionally a regular order (e.g.
hereditary), this sequence agrees with the one of \cite[Theorem 11.2]%
{etnclca}, and moreover $\mathbb{K}_{n}(\mathsf{LCA}_{\mathfrak{A}}^{\ast})=0$
for $n\leq-1$ in this case.
\end{theorem}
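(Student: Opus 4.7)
The strategy is to deduce the theorem directly from Theorem \ref{thm_Sequence1} by plugging in a specific localizing invariant, namely non-connective algebraic $K$-theory $\mathbb{K}:\operatorname{Cat}^{\operatorname{ex}}_\infty\to\operatorname{Sp}$ (which is a localizing invariant in the sense of \cite{MR3070515}). This yields a fiber sequence of spectra
\[
\mathbb{K}(\mathfrak{A})\longrightarrow \mathbb{K}(A_{\mathbb{R}})\longrightarrow \mathbb{K}(\mathsf{PLCA}_{\mathfrak{A}})\text{,}
\]
whose associated long exact sequence of homotopy groups will supply the sought sequence after identifying each term.

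The term identifications are then standard bookkeeping. For a ring $R$, one has $\mathbb{K}_n(R)=K_n(R)$ for all $n\geq 0$, so $\mathbb{K}_n(\mathfrak{A})=K_n(\mathfrak{A})$ and $\mathbb{K}_n(A_{\mathbb{R}})=K_n(A_{\mathbb{R}})$ in non-negative degrees. Because $A_{\mathbb{R}}$ is a semisimple $\mathbb{R}$-algebra, hence a regular ring, $\mathbb{K}_n(A_{\mathbb{R}})=0$ for $n<0$. Since $\mathbb{K}$ is a localizing invariant, it is automatically invariant under passing to the idempotent completion, giving
\[
\mathbb{K}(\mathsf{PLCA}_{\mathfrak{A}})\overset{\sim}{\longrightarrow}\mathbb{K}(\mathsf{PLCA}_{\mathfrak{A}}^{ic})=\mathbb{K}(\mathsf{LCA}_{\mathfrak{A}}^{\ast})\text{.}
\]
As $\mathsf{LCA}_{\mathfrak{A}}^{\ast}$ is idempotent complete, $\mathbb{K}_n(\mathsf{LCA}_{\mathfrak{A}}^{\ast})=K_n(\mathsf{LCA}_{\mathfrak{A}}^{\ast})$ for $n\geq 0$.

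Feeding these identifications into the homotopy long exact sequence produces for $n\geq 1$ the stated sequence, since each neighbouring term is in non-negative degree. At the $n=0$ end, the boundary map lands in $\mathbb{K}_{-1}(\mathfrak{A})$ and the next term is $\mathbb{K}_{-1}(A_{\mathbb{R}})=0$; this gives exactly the termination claimed. For $n\leq -1$, the same vanishing $\mathbb{K}_n(A_{\mathbb{R}})=0$ together with $\mathbb{K}_{n-1}(A_{\mathbb{R}})=0$ forces the boundary map to be an isomorphism $\mathbb{K}_n(\mathsf{LCA}_{\mathfrak{A}}^{\ast})\cong \mathbb{K}_{n-1}(\mathfrak{A})$.

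For the final assertion in the regular case, if $\mathfrak{A}$ is regular then $\mathbb{K}_m(\mathfrak{A})=0$ for $m<0$ by a classical theorem of Bass (regularity implies vanishing negative $K$-theory), so the isomorphism of the previous step forces $\mathbb{K}_n(\mathsf{LCA}_{\mathfrak{A}}^{\ast})=0$ for $n\leq -1$. The agreement of our sequence with the one of \cite[Theorem 11.2]{etnclca} is not a new input: it is precisely the second clause of Theorem \ref{thm_Sequence1}, which produces a morphism of fiber sequences to the sequence of \emph{loc.\ cit.}\ and asserts it to be an equivalence. The only point to verify is that this equivalence is compatible with passage to homotopy groups and with the identifications made above, which is automatic. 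No step presents a real obstacle beyond this careful unwinding; the substance of the theorem is already contained in Theorem \ref{thm_Sequence1}, and what remains is the passage from the ``categorified'' fiber sequence of spectra to the classical long exact sequence of $K$-groups.
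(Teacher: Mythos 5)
Your proposal is correct and follows essentially the same route as the paper: specialize Theorem \ref{thm_Sequence1} to non-connective $K$-theory valued in spectra, take the homotopy long exact sequence, and identify terms using $K_{n}(\mathsf{C}^{ic})\cong\mathbb{K}_{n}(\mathsf{C})$ for $n\geq 0$, the vanishing of $\mathbb{K}_{n}(A_{\mathbb{R}})$ in negative degrees by regularity of $A_{\mathbb{R}}$, and (for the last clause) vanishing of negative $K$-theory of the regular order $\mathfrak{A}$ together with the comparison clause of Theorem \ref{thm_Sequence1}. No gaps.
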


\begin{proof}
Connective $K$-theory is not a localizing invariant, so we first need to work
with non-connective $K$-theory, which we shall denote by $\mathbb{K}$,
instead. It takes values in $\mathsf{A}:=\mathsf{Sp}$, the stable $\infty
$-category of spectra. From the fiber sequence of spectra provided by Theorem
\ref{thm_Sequence1}, we obtain the long exact sequence of homotopy groups
(i.e. non-connective $K$-groups)%
\[
\cdots\rightarrow\mathbb{K}_{n}(\mathfrak{A})\rightarrow\mathbb{K}%
_{n}(A_{\mathbb{R}})\rightarrow\mathbb{K}_{n}(\mathsf{PLCA}_{\mathfrak{A}%
})\rightarrow\mathbb{K}_{n-1}(\mathfrak{A})\rightarrow\cdots\text{.}%
\]
Next, for $K$ denoting connective $K$-theory, recall that $K_{n}%
(\mathsf{C}^{ic})\cong\mathbb{K}_{n}(\mathsf{C})$ for all $n\geq0$ and any
exact category \cite{MR2206639}. The underlying category of $\mathbb{K}%
_{n}(\mathfrak{A})$ is $P_{f}(\mathfrak{A})$, which is idempotent complete, so
we deduce%
\[
K_{n}(\mathfrak{A})=\mathbb{K}_{n}(\mathfrak{A})
\]
for all $n\geq0$. The ring $A_{\mathbb{R}}$ is semisimple and in particular
any module is projective, so $K_{n}(A_{\mathbb{R}})=\mathbb{K}_{n}%
(A_{\mathbb{R}})$ for $n\geq0$, but moreover since this is a regular ring,
$\mathbb{K}_{n}(A_{\mathbb{R}})=0$ for all $n<0$. Thus, our sequence can be
rewritten as%
\[
\cdots\rightarrow K_{1}(\mathsf{PLCA}_{\mathfrak{A}})\rightarrow
K_{0}(\mathfrak{A})\rightarrow K_{0}(A_{\mathbb{R}})\rightarrow K_{0}%
(\mathsf{PLCA}_{\mathfrak{A}}^{ic})\rightarrow\mathbb{K}_{-1}(\mathfrak{A}%
)\rightarrow0
\]
as well as $\mathbb{K}_{n}(\mathsf{PLCA}_{\mathfrak{A}})\cong\mathbb{K}%
_{n-1}(\mathfrak{A})$ for $n\leq-1$.
\end{proof}

The case of group rings is of particular relevance.

\begin{corollary}
Suppose $G$ is a finite group. Take $A=\mathbb{Q}[G]$ and $\mathfrak{A}%
:=\mathbb{Z}[G]$. There is a long exact sequence of algebraic $K$-groups%
\[
\cdots\rightarrow K_{n}(\mathbb{Z}[G])\rightarrow K_{n}(\mathbb{R}%
[G])\rightarrow K_{n}(\mathsf{LCA}_{\mathbb{Z}[G]}^{\ast})\rightarrow
K_{n-1}(\mathbb{Z}[G])\rightarrow\cdots
\]
for positive $n$, ending in%
\[
\cdots\rightarrow K_{0}(\mathbb{Z}[G])\rightarrow K_{0}(\mathbb{R}%
[G])\rightarrow K_{0}(\mathsf{LCA}_{\mathbb{Z}[G]}^{\ast})\rightarrow
\mathbb{K}_{-1}(\mathbb{Z}[G])\rightarrow0
\]
and $\mathbb{K}_{n}(\mathsf{LCA}_{\mathbb{Z}[G]}^{\ast})=0$ for $n<0$.
\end{corollary}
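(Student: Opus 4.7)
The plan is to treat this corollary as a direct specialization of Theorem \ref{thm_OrdinaryKGroupsLongExactSequence}. First I would verify the hypothesis: $\mathfrak{A} = \mathbb{Z}[G]$ must be a Gorenstein order inside $A = \mathbb{Q}[G]$. This is recorded as Lemma \ref{lemma_IntegralGroupRingsAreGorensteinOrders}, where one shows $\mathfrak{A}^{\ast} = \tfrac{1}{|G|}\mathfrak{A}$ inside $\mathbb{Q}[G]$. With this in hand, the two displayed long exact sequences follow immediately by substituting $\mathfrak{A} = \mathbb{Z}[G]$ and $A_{\mathbb{R}} = \mathbb{R}[G]$ into the theorem.

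The remaining content is the vanishing $\mathbb{K}_{n}(\mathsf{LCA}_{\mathbb{Z}[G]}^{\ast}) = 0$ for $n < 0$. Using the theorem's isomorphism
\[
\mathbb{K}_{n}(\mathsf{LCA}_{\mathfrak{A}}^{\ast}) \cong \mathbb{K}_{n-1}(\mathfrak{A}) \qquad (n \leq -1),
\]
this reduces to showing $\mathbb{K}_{m}(\mathbb{Z}[G]) = 0$ for every $m \leq -2$. I would cite Carter's classical theorem on the negative $K$-theory of integral group rings of finite groups, which supplies exactly this vanishing; note that Carter also gives an explicit formula for $\mathbb{K}_{-1}(\mathbb{Z}[G])$, but that group legitimately appears as the last nonzero term in our sequence and so need not vanish.

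There is no genuine obstacle to overcome: the entire corollary is the combination of (a) Gorenstein-ness of integral group rings (already supplied in this paper), (b) Theorem \ref{thm_OrdinaryKGroupsLongExactSequence}, and (c) Carter's vanishing for $\mathbb{K}_{m}(\mathbb{Z}[G])$ when $m \leq -2$. The only thing to check with any care is that the isomorphism $\mathbb{K}_{n}(\mathsf{LCA}_{\mathfrak{A}}^{\ast}) \cong \mathbb{K}_{n-1}(\mathfrak{A})$ is applied for every $n \leq -1$, so that in particular $\mathbb{K}_{-1}(\mathsf{LCA}_{\mathbb{Z}[G]}^{\ast}) \cong \mathbb{K}_{-2}(\mathbb{Z}[G]) = 0$ and not just the deeper negative groups.
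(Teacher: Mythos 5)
Your proposal is correct and coincides with the paper's own proof: invoke Theorem \ref{thm_OrdinaryKGroupsLongExactSequence} (legitimate since $\mathbb{Z}[G]$ is Gorenstein by Lemma \ref{lemma_IntegralGroupRingsAreGorensteinOrders}) and then use Carter's vanishing $\mathbb{K}_{m}(\mathbb{Z}[G])=0$ for $m\leq-2$ together with the shift isomorphism to kill all $\mathbb{K}_{n}(\mathsf{LCA}_{\mathbb{Z}[G]}^{\ast})$ with $n<0$. No differences worth noting.
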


The group $\mathbb{K}_{-1}(\mathbb{Z}[G])$ is well-understood by the work of
Carter. He has shown that%
\[
\mathbb{K}_{-1}(\mathbb{Z}[G])\cong\mathbb{Z}^{a}\oplus(\mathbb{Z}/2)^{b}%
\]
for suitable $a,b\in\mathbb{Z}_{\geq0}$, which are a little involved to
describe explicitly, \cite[Theorem 1]{MR590500}. A lot of explicit
computations can be found for example in \cite{MR2600138}, \cite{MR3169512}.
Although this shows that some literature and research exists, it appears that
in general the study of negative $K$-groups of orders in semisimple algebras
is not very developed.

\begin{proof}
Apply Theorem \ref{thm_OrdinaryKGroupsLongExactSequence}. This is possible
because $\mathbb{Z}[G]$ is a Gorenstein order by Lemma
\ref{lemma_IntegralGroupRingsAreGorensteinOrders}. Moreover, $\mathbb{K}%
_{n}(\mathbb{Z}[G])=0$ for $n\leq-2$ by work of Carter \cite{MR590500},
\cite{MR561543}.\footnote{We remark that Hsiang has conjectured that
$\mathbb{K}_{n}(\mathbb{Z}[G])=0$ for $n\leq-2$ for any finitely presented
group. This remains open.}
\end{proof}

\begin{corollary}
The non-connective $K$-theory spectrum $\mathbb{K}(\mathsf{LCA}_{\mathbb{Z}%
[G]}^{\ast})$ for the integral group ring of any finite group is actually connective.
\end{corollary}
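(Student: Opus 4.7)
The plan is to recognize this final corollary as a direct reformulation of the vanishing statement already recorded at the end of the preceding corollary. By definition, a spectrum $E$ is connective if and only if $\pi_n E = 0$ for all $n < 0$; applying this to $E = \mathbb{K}(\mathsf{LCA}_{\mathbb{Z}[G]}^{\ast})$, connectivity is equivalent to the assertion $\mathbb{K}_n(\mathsf{LCA}_{\mathbb{Z}[G]}^{\ast}) = 0$ for every $n < 0$, which is precisely the last sentence of the corollary immediately preceding.

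To make the derivation self-contained, I would trace the inputs once more. Since $G$ is a finite group, Lemma \ref{lemma_IntegralGroupRingsAreGorensteinOrders} shows that $\mathbb{Z}[G] \subset \mathbb{Q}[G]$ is a Gorenstein order, so Theorem \ref{thm_OrdinaryKGroupsLongExactSequence} applies and furnishes the isomorphism
\[
\mathbb{K}_n(\mathsf{LCA}_{\mathbb{Z}[G]}^{\ast}) \cong \mathbb{K}_{n-1}(\mathbb{Z}[G])
\]
valid for all $n \leq -1$. For each such $n$ the index on the right satisfies $n - 1 \leq -2$, so Carter's vanishing theorem \cite{MR590500}, \cite{MR561543}, giving $\mathbb{K}_m(\mathbb{Z}[G]) = 0$ for $m \leq -2$, kills the right-hand side uniformly. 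Combined with connectivity of the underlying category's $K$-theory in non-negative degrees, this forces the whole spectrum into non-negative degrees.

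There is no genuine obstacle in this final step; all the substantive work has been done in Theorem \ref{thm_OrdinaryKGroupsLongExactSequence} and in Carter's classical vanishing. The only thing the corollary adds is the linguistic passage from the pointwise vanishing of negative non-connective $K$-groups to the global statement that the spectrum $\mathbb{K}(\mathsf{LCA}_{\mathbb{Z}[G]}^{\ast})$ lies in the connective part of the stable $\infty$-category of spectra.
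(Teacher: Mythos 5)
Your argument is correct and is exactly the one the paper intends: the corollary is an immediate rephrasing of the vanishing $\mathbb{K}_n(\mathsf{LCA}_{\mathbb{Z}[G]}^{\ast})=0$ for $n<0$ from the preceding corollary, which in turn follows from the isomorphism $\mathbb{K}_n(\mathsf{LCA}_{\mathbb{Z}[G]}^{\ast})\cong\mathbb{K}_{n-1}(\mathbb{Z}[G])$ for $n\leq-1$ of Theorem \ref{thm_OrdinaryKGroupsLongExactSequence} (applicable since $\mathbb{Z}[G]$ is Gorenstein) together with Carter's vanishing of $\mathbb{K}_m(\mathbb{Z}[G])$ for $m\leq-2$. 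No gaps; the approach matches the paper's.
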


Finally, let us discuss the analogue of the comparison map in \cite{etnclca2}.
We refer to that paper for background on the terms and notation we employ.

\begin{theorem}
\label{thm_ComparisonMap}Let $A$ be a finite-dimensional semisimple
$\mathbb{Q}$-algebra and $\mathfrak{A}\subset A$ any order. Then the map%
\begin{equation}
\vartheta:K_{0}(\mathfrak{A},\mathbb{R})\longrightarrow K_{1}(\mathsf{LCA}%
_{\mathfrak{A}}^{\ast}) \label{lct5}%
\end{equation}
sending the Bass--Swan representative $[P,\varphi,Q]$ to the double exact
sequence $\left\langle \left\langle P,\varphi,Q\right\rangle \right\rangle $
(as defined in \cite{etnclca2}) is a well-defined morphism from the Bass--Swan
to the Nenashev presentation. If $\mathfrak{A}$ is a Gorenstein order, then
this map is an isomorphism.
\end{theorem}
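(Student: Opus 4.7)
The plan is first to establish well-definedness by inspection, then to deduce the isomorphism claim via a five-lemma argument between two compatible five-term exact sequences.

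For well-definedness, the construction $\langle\langle P,\varphi,Q\rangle\rangle$ of \cite{etnclca2} uses only the objects $P,Q\in P_f(\mathfrak{A})$ (discrete), their scalar extensions $P_{\mathbb{R}},Q_{\mathbb{R}}$ (vector $\mathfrak{A}$-modules), the isomorphism $\varphi\colon P_{\mathbb{R}}\overset{\sim}{\to}Q_{\mathbb{R}}$, and the quotients $P_{\mathbb{R}}/P$ and $Q_{\mathbb{R}}/Q$. By Proposition \ref{prop_StdSeq} applied to a finitely generated projective (as in Proposition \ref{Prop_ExistResolutionsInFiniteTypeCase}), each sequence $P\hookrightarrow P_{\mathbb{R}}\twoheadrightarrow P_{\mathbb{R}}/P$ is a PI-presentation and hence lives inside $\mathsf{PLCA}_{\mathfrak{A}}\subseteq \mathsf{LCA}_{\mathfrak{A}}^{\ast}$. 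Consequently the entire double exact sequence $\langle\langle P,\varphi,Q\rangle\rangle$ takes place in $\mathsf{LCA}_{\mathfrak{A}}^{\ast}$, and the verification that the Bass--Swan relations $[P,\varphi,Q]+[P',\varphi',Q']=[P\oplus P',\varphi\oplus\varphi',Q\oplus Q']$ and $[P,\psi\circ\varphi,R]=[P,\varphi,Q]+[Q,\psi,R]$ are sent to the Nenashev relations is the verbatim argument of \cite{etnclca2}, since all intermediate exact sequences appearing in that argument remain in the smaller fully exact subcategory.

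For the isomorphism when $\mathfrak{A}$ is Gorenstein, I would invoke two five-term exact sequences. The first is the ordinary Bass--Swan localization sequence for the pair $(\mathfrak{A},A_{\mathbb{R}})$:
\[
K_{1}(\mathfrak{A})\to K_{1}(A_{\mathbb{R}})\to K_{0}(\mathfrak{A},\mathbb{R})\to K_{0}(\mathfrak{A})\to K_{0}(A_{\mathbb{R}}).
\]
The second is the bottom portion of Theorem \ref{thm_OrdinaryKGroupsLongExactSequence}:
\[
K_{1}(\mathfrak{A})\to K_{1}(A_{\mathbb{R}})\to K_{1}(\mathsf{LCA}_{\mathfrak{A}}^{\ast})\to K_{0}(\mathfrak{A})\to K_{0}(A_{\mathbb{R}}).
\]
I would assemble these into a commutative ladder in which the four outer vertical maps are the identity (the $K_n(\mathfrak{A})$ and $K_n(A_{\mathbb{R}})$ terms agree, via the same exact functors that send a module to itself with the discrete, respectively real-topological, topology) and the middle vertical map is $\vartheta$. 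Commutativity plus the five lemma then yields that $\vartheta$ is an isomorphism.

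The main obstacle lies in verifying commutativity of the two squares adjacent to $\vartheta$. Concretely, one must check that the Bass--Swan connecting homomorphism $\partial[P,\varphi,Q]=[P]-[Q]$ agrees with the composition of $\vartheta$ with the boundary map $K_{1}(\mathsf{LCA}_{\mathfrak{A}}^{\ast})\to K_{0}(\mathfrak{A})$ produced by Proposition \ref{prop_BasicSeqWithCG} together with the identification $K(A_{\mathbb{R}})\simeq K(\mathsf{PLCA}_{\mathfrak{A},cg})$ of Theorem \ref{thm_IdentifyCGPart}, and similarly that the $K_{1}(A_{\mathbb{R}})\to K_{1}(\mathsf{LCA}_{\mathfrak{A}}^{\ast})$ square commutes. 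This amounts to tracing an automorphism $\varphi\in\operatorname{Aut}(P_{\mathbb{R}})$ through the two constructions and noting that both ultimately present the class by means of the same PI-presentation $P\hookrightarrow P_{\mathbb{R}}\twoheadrightarrow P_{\mathbb{R}}/P$ (which exists in $\mathsf{PLCA}_{\mathfrak{A}}$ exactly because $\mathfrak{A}$ is Gorenstein, via Proposition \ref{prop_StdSeq}). This compatibility check was already carried out in \cite{etnclca2} for the category $\mathsf{LCA}_{\mathfrak{A}}$, and since the entire computation takes place inside the fully exact subcategory $\mathsf{PLCA}_{\mathfrak{A}}$, it transports without change to the present setting.
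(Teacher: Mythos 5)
Your proposal takes essentially the same route as the paper: for well-definedness one checks that every object of the Nenashev representative lies in $\mathsf{PLCA}_{\mathfrak{A}}$ and then imports the relations verbatim from Part II, and for the isomorphism one places $\vartheta$ in a commutative ladder between the relative $K$-theory sequence of the pair $(\mathfrak{A},A_{\mathbb{R}})$ and the sequence of Theorem \ref{thm_OrdinaryKGroupsLongExactSequence}, with identities on the $K_n(\mathfrak{A})$ and $K_n(A_{\mathbb{R}})$ columns, and concludes by the five lemma, deferring the commutativity of the two squares adjacent to $\vartheta$ to the computation already done in Part II (the paper records precisely which ingredients of that computation must be swapped out, namely Diagram \ref{l_B3} and the equivalence of Lemma \ref{lemma_Equiv}).

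One caveat: the theorem asserts well-definedness for an \emph{arbitrary} order, but your justification that the sequences $P\hookrightarrow P_{\mathbb{R}}\twoheadrightarrow P_{\mathbb{R}}/P$ are PI-presentations goes through Proposition \ref{prop_StdSeq} and Proposition \ref{Prop_ExistResolutionsInFiniteTypeCase}, both of which carry the Gorenstein hypothesis. As written, your argument therefore only establishes the map $\vartheta$ in the Gorenstein case, whereas the paper's proof asserts membership of the relevant objects in $\mathsf{PLCA}_{\mathfrak{A}}$ without appealing to that hypothesis; you would need to supply a Gorenstein-free reason for this containment (or restrict the first assertion accordingly).
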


\begin{proof}
One can adapt the proof of \cite{etnclca2} with only a few changes. First of
all, note that all objects which occur in the Nenashev representative
$\left\langle \left\langle P,\varphi,Q\right\rangle \right\rangle $ lie in the
full subcategory $\mathsf{PLCA}_{\mathfrak{A}}\subset\mathsf{LCA}%
_{\mathfrak{A}}$, so the map naturally lands in $K_{1}(\mathsf{LCA}%
_{\mathfrak{A}}^{\ast})=K_{1}(\mathsf{PLCA}_{\mathfrak{A}})$. Moreover, all
the proofs that the map is well-defined carry over verbatim. This already
suffices to show that the map exists. It only remains to prove that it is an
isomorphism if $\mathfrak{A}$ is Gorenstein. To this end, we also copy the
proof of \cite{etnclca2}. Replace the diagram in the statement of
\cite[Theorem 3.2]{etnclca2} by%
\[%
\xymatrix{
\cdots\ar[r] & K_{1}(\mathfrak{A},{\mathbb{R} }) \ar[r] \ar[d] & K_{1}%
(\mathfrak{A}) \ar@{=}[d] \ar[r] & K_{1}(A_{{\mathbb{R} }}) \ar@{=}%
[d] \ar[r]^{\delta} & K_{0}(\mathfrak{A},{\mathbb{R} }) \ar[d]^{\vartheta}
\ar[r]  & K_{0}(\mathfrak{A}) \ar@{=}[d] \ar[r] & \cdots\\
\cdots\ar[r] & K_{2}(\mathsf{LCA}_{\mathfrak{A}}) \ar[r] & K_{1}(\mathfrak
{A}) \ar[r] & K_{1}(A_{{\mathbb{R} }}) \ar[r] & K_{1}(\mathsf{LCA}^{\ast
}_{\mathfrak{A}}) \ar[r]_{\partial} & K_{0}(\mathfrak{A}) \ar[r] & \cdots
\text{,} \\
}%
\]
where $\vartheta$ is the map of Equation \ref{lct5} and the bottom row is the
one coming from Theorem \ref{thm_OrdinaryKGroupsLongExactSequence}. Then
proceed in the proof exactly as loc. cit., except for the following changes:
The diagram%
\[%
\xymatrix{
{\mathsf{Mod}_{{\mathfrak{A}},fg}} \ar[r] \ar[d]_{g} & {\mathsf{Mod}%
_{\mathfrak{A}}} \ar[r] \ar[d] & {{\mathsf{Mod}_{\mathfrak{A}}}}%
/{{\mathsf{Mod}_{{\mathfrak{A}},fg}}} \ar[d]^{\Phi} \\
\mathsf{LCA}_{\mathfrak{A},cg} \ar[r] & \mathsf{LCA}_{\mathfrak{A}}
\ar[r] & {\mathsf{LCA}_{\mathfrak{A}}}/{\mathsf{LCA}_{\mathfrak{A},cg}}
}%
\]
needs to be replaced by the one of categories underlying Diagram \ref{l_B3}.
The exact equivalence of exact categories%
\[
\mathsf{Mod}_{\mathfrak{A}}/\mathsf{Mod}_{\mathfrak{A},fg}\overset{\sim
}{\longrightarrow}\mathsf{LCA}_{\mathfrak{A}}/\mathsf{LCA}_{\mathfrak{A},cg}%
\]
needs to be replaced by $P_{\oplus}(\mathfrak{A})/P_{f}(\mathfrak{A}%
)\overset{\sim}{\longrightarrow}\mathsf{PLCA}_{\mathfrak{A}}/\mathsf{PLCA}%
_{\mathfrak{A},cg}$ of Lemma \ref{lemma_Equiv}. The rest works verbatim,
always just using that all the objects which the proof uses already lie in the
full subcategory $\mathsf{LCA}_{\mathfrak{A}}^{\ast}$ of $\mathsf{LCA}%
_{\mathfrak{A}}$.
\end{proof}

\bibliographystyle{amsalpha}
\bibliography{ollinewbib}

\end{document}